\newtheorem{thm}{Theorem}[section]
\newtheorem{cor}[thm]{Corollary}
\newtheorem{lem}[thm]{Lemma}
\newtheorem{exm}[thm]{Example}
\newtheorem{prop}[thm]{Proposition}
\theoremstyle{definition}
\theoremstyle{remark}
\newtheorem{rem}[thm]{\bf Remark}
\numberwithin{equation}{section}
\begin{document}
\title[The singularity category as a stable module category]{The singularity category as a stable module category}
\author[Xiao-Wu Chen, Zhengfang Wang] {Xiao-Wu Chen$^*$, Zhengfang Wang}

\makeatletter
\@namedef{subjclassname@2020}{\textup{2020} Mathematics Subject Classification}
\makeatother

\thanks{$^*$ The corresponding author}
\date{\today}
\subjclass[2020]{16S88, 18G80, 13D02, 16S88}

\thanks{xwchen$\symbol{64}$mail.ustc.edu.cn, zhengfangw$\symbol{64}$nju.edu.cn}
\keywords{singularity category, stable category, stabilization, Leavitt ring, FC ring}

\maketitle

\dedicatory{}%
\commby{}%

\begin{abstract}
We investigate the stabilization $\mathcal{S}$  of the module category over an artinian ring $\Lambda$ by formally inverting the tensor endofunctor  given by the bimodule of relative noncommutative differential $1$-forms. It turns out that $\mathcal{S}$ is a Frobenius abelian category, which is equivalent to the category of finitely presented modules over  the zeroth component $L_0$ of the Leavitt ring $L$. It follows that  $L_0$ is an FC ring in the sense of Damiano, which is usually not quasi-Frobenius.  Moreover, the singularity category of $\Lambda$ is triangle equivalent to the stable module category over $L_0$.
\end{abstract}

\section{Introduction}

Let $\Lambda$ be a left artinian ring, for example, a finite dimensional algebra over a field.  The \emph{singularity category} $\mathbf{D}_{\rm sg}(\Lambda)$ is defined to be the Verdier quotient category of the bounded derived category by the bounded homotopy category of projective modules; see \cite{Buc, Orl}. It detects the homological singularity of $\Lambda$ in the following sense: the singularity category $\mathbf{D}_{\rm sg}(\Lambda)$ vanishes if and only if $\Lambda$ has finite global dimension. We mention that  singularity categories of certain finite dimensional algebras might be equivalent to the one of a commutative algebra of positive Krull dimension \cite{Kalck, KT}.  The latter (graded) singularity category plays a role in  homological mirror symmetry for non-Calabi-Yau cases \cite{Orl, Ebel}.

The singularity category  is called the \emph{stabilized derived category} in \cite{Buc}, as it describes  the stable homological features of $\Lambda$. In other words, it captures the asymptotic  behaviour of the syzygy endofunctor $\Omega_\Lambda$ on the stable module category $\Lambda\mbox{-\underline{\rm mod}}$. This statement is made precise by a fundamental result in \cite{Buc, KV}, which  states that $\mathbf{D}_{\rm sg}(\Lambda)$ is equivalent to the stabilization \cite{Hel68, Tie} of $\Lambda\mbox{-\underline{\rm mod}}$ by formally inverting $\Omega_\Lambda$; see also \cite{Bel}.

Assume that $E\subseteq \Lambda$ is a semisimple subring such that $\Lambda$ is finitely generated as a left $E$-module. We have the $\Lambda$-$\Lambda$-bimodule $\Omega_{{\rm nc}, \Lambda/E}$ of $E$-relative noncommutative differential $1$-forms \cite{CQ}. It is well known that $\Omega_\Lambda$ admits an exact lift, which is given by the tensor endofunctor $(\Omega_{{\rm nc}, \Lambda/E})\otimes_\Lambda - $ on the module category $\Lambda\mbox{-mod}$. It is natural to investigate the category
$$\mathcal{S}=\mathcal{S}(\Lambda\mbox{-mod}, (\Omega_{{\rm nc}, \Lambda/E})\otimes_\Lambda - ),$$
which is the stabilization of $\Lambda\mbox{-mod}$ by formally inverting $(\Omega_{{\rm nc}, \Lambda/E})\otimes_\Lambda - $.

The following result is obtained by combining  the mentioned fundamental result in \cite{Buc, KV, Bel} with Propositions~\ref{prop:Frob} and~\ref{prop:tri-equ}.
\vskip 5pt

\noindent {\bf Proposition~A}.  \emph{The category $\mathcal{S}$ above is Frobenius abelian. Moreover, its stable category $\underline{\mathcal{S}}$ is triangle equivalent to $\mathbf{D}_{\rm sg}(\Lambda)$.
}

\vskip 5pt
Here, we recall from \cite{Hap} that the stable category of any Frobenius exact category is canonically triangulated; compare \cite{Hel}.

Recall from \cite{CWKW} the \emph{Leavitt ring} $L=L_\Lambda(\Omega_{{\rm nc}, \Lambda/E})$ associated to the $\Lambda$-$\Lambda$-bimodule $\Omega_{{\rm nc}, \Lambda/E}$, which is  naturally $\mathbb{Z}$-graded. We mention that Leavitt rings are also introduced in \cite{CO}, with similar ideas traced back to \cite{Leav}; compare \cite{AAP, AMP, CFHL}.

By \cite{CWKW} and Proposition~\ref{prop:iso-graded}, the zeroth component $L_0$  of $L$ is isomorphic to the colimit of the following sequence of ring homomorphisms.
$$\Lambda\longrightarrow {\rm End}_\Lambda(\Omega_{{\rm nc}, \Lambda/E})^{\rm op} \longrightarrow {\rm End}_\Lambda(\Omega_{{\rm nc}, \Lambda/E}^{\otimes_\Lambda 2})^{\rm op} \longrightarrow \cdots$$
Here, we denote by ${\rm End}_\Lambda(-)$ the endomorphism ring in the category of left $\Lambda$-modules,  the leftmost homomorphism is induced by the right $\Lambda$-action on $\Omega_{{\rm nc}, \Lambda/E}$ and the remaining ones are induced by the tensor endofunctor $(\Omega_{{\rm nc}, \Lambda/E})\otimes_\Lambda -$.

The main result, Theorem~\ref{thm:main}, implies that the singularity category is even equivalent to the stable module category of an FC ring in the sense of \cite{Dam}. It  justifies the title.

 FC rings are coherent analogues of quasi-Frobenius rings. Namely, FC rings are coherent, and noetherian FC rings coincide with quasi-Frobenius rings. By \cite{Dam} and  Lemma~\ref{lem:FC}, the stable category of finitely presented modules over any FC ring is canonically triangulated.

\vskip 5pt
\noindent {\bf Theorem~B}.  \emph{The category $\mathcal{S}$ is equivalent to $L_0\mbox{-{\rm mod}}$, the category of finitely presented left $L_0$-modules. Consequently, $L_0$ is an FC ring, and we have a triangle equivalence
$$\mathbf{D}_{\rm sg}(\Lambda)\simeq L_0\mbox{-\underline{\rm mod}}.$$
}
%\vskip 5pt

We mention previous encounters \cite{Smi, CY, CWKW} between singularity categories of artinian rings and a class of Leavitt rings, namely Leavitt path algebras. However, the encounter in Theorem~B is completely different.

The FC ring $L_0$ above is usually non-noetherian, thus not quasi-Frobenius; see Remark~\ref{rem:final}. This gives rise to a new family of non-quasi-Frobenius FC rings, in a somewhat unexpected manner. It turns out that the non-regularity of $L_0$ detects the homological singularity of $\Lambda$; see Corollary~\ref{cor:regular}. We refer to Section~6 for an explicit example where $\Lambda$ is an algebra with radical square zero.  

The key ingredient in the proof of Theorem~B is Proposition~\ref{prop:iso-graded}, which claims that the Leavitt ring \cite{CWKW} is isomorphic to a certain orbit ring \cite{Len, Berg} appearing in the stabilization of any module category.

Let us describe the content of the paper.  In Section~2, we characterize the category of finitely presented graded modules via the existence of cokernels and progenerators. Section~3 is devoted to the study of stabilizations of additive categories. In Section~4, we prove that the stabilization of a module category  is equivalent to the category of  finitely presented graded modules over the Leavitt ring; see Theorem~\ref{thm:s-gr}. In Section~5 we prove Proposition~A and Theorem~B. In Section~6 we provide an explicit description of the Leavitt ring of an algebra with radical square zero.

By default, rings will mean unital rings, and modules will mean left unital modules. For two objects $X$ and $Y$ in  a category $\mathcal{C}$, the  Hom-set ${\rm Hom}_\mathcal{C}(X, Y)$ will be denoted by $\mathcal{C}(X, Y)$. For a set $\mathcal{U}$ of objects in an additive category, we denote by ${\rm add}(\mathcal{U})$ the full subcategory consisting of direct summands of finite direct sums of objects from $\mathcal{U}$.

\section{Categories having cokernels and progenerators}

In this section, we recall basic facts on additive categories having cokernels and progenerators. We characterize the category of finitely presented graded modules using the existence of cokernels and $\Sigma$-progenerators; see Proposition~\ref{prop:equiv-graded}.

Let $\mathcal{C}$ be an additive category. We say that $\mathcal{C}$ \emph{has cokernels} if each morphism in $\mathcal{C}$ has a cokernel. A sequence  $X\stackrel{f}\rightarrow Y \stackrel{g} \rightarrow Z\rightarrow 0$ in $\mathcal{C}$ is  \emph{right-exact} if $g$ is a cokernel of $f$.

Assume that $\mathcal{C}$ has cokernels. An object $P$ is \emph{projective} if for any right-exact sequence $X \rightarrow Y  \rightarrow Z\rightarrow 0$ in $\mathcal{C}$, the induced sequence of abelian groups
$$\mathcal{C}(P, X)\longrightarrow \mathcal{C}(P, Y)  \longrightarrow \mathcal{C}(P, Z)\longrightarrow 0$$
is exact. By a \emph{progenerator} in $\mathcal{C}$, we mean a projective object $P$ such that each object $X$ in $\mathcal{C}$ fits into some right-exact sequence $P_0 \rightarrow P_1  \rightarrow X\rightarrow 0$ with each $P_i\in {\rm add}(P)$.

Let $R$ be an arbitrary ring. Denote by $R\mbox{-mod}$ the category of finitely presented left $R$-modules, and by $R\mbox{-proj}$ its full subcategory of finitely generated projective modules.

The following result is standard; compare \cite[Chapter~III, Section~3]{Aus}.

\begin{lem}\label{lem:ungraded-equiv}
    Let $\mathcal{C}$ be an additive category. Then $\mathcal{C}$ has cokernels and a progenerator if and only if there is an equivalence $\mathcal{C} \simeq R\mbox{-}{\rm mod}$ for some ring $R$.
\end{lem}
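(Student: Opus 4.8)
The plan is to prove the two implications separately, the content lying in the forward direction; the argument is the classical projectivization argument of Morita--Auslander type, adapted to a category that merely has cokernels. For the ``if'' direction, suppose $\mathcal{C}\simeq R\mbox{-mod}$ for some ring $R$. Then it suffices to exhibit cokernels and a progenerator in $R\mbox{-mod}$: the cokernel of a morphism between finitely presented modules is again finitely presented, and the regular module ${}_RR$ is a progenerator, being projective (as ${\rm Hom}_R({}_RR,-)$ is exact) and such that every finitely presented module $M$ sits in a right-exact sequence $R^{m}\to R^{n}\to M\to 0$ with $R^{m},R^{n}\in{\rm add}({}_RR)$, by the very definition of finite presentation.

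For the ``only if'' direction, fix a progenerator $P$ of $\mathcal{C}$, put $R={\rm End}_\mathcal{C}(P)^{\rm op}$, and consider the functor $F=\mathcal{C}(P,-)$ from $\mathcal{C}$ to the category of left $R$-modules. First I would establish the projectivization step: $F$ restricts to an equivalence ${\rm add}(P)\simeq R\mbox{-proj}$, and more precisely the natural comparison $\mathcal{C}(Q,Y)\to{\rm Hom}_R(FQ,FY)$ is an isomorphism whenever $Q\in{\rm add}(P)$ and $Y$ is \emph{arbitrary} --- this follows from $F(P)\cong{}_RR$, the identity ${\rm Hom}_R({}_RR,FY)=FY=\mathcal{C}(P,Y)$, and additivity. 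Next, since $P$ is projective, $F$ carries right-exact sequences to right-exact sequences; applying this to a presentation $P_0\to P_1\to X\to 0$ in $\mathcal{C}$ (available because $P$ is a progenerator) exhibits $F(X)$ as the cokernel of a morphism between finitely generated projectives, hence finitely presented. Thus $F$ corestricts to a cokernel-preserving functor $\mathcal{C}\to R\mbox{-mod}$.

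It remains to show this $F$ is fully faithful and dense. For full faithfulness, given $X,Y$ in $\mathcal{C}$, a presentation $P_0\to P_1\to X\to 0$ yields, upon applying $\mathcal{C}(-,Y)$ and ${\rm Hom}_R(-,FY)$, two left-exact sequences linked by the comparison map of the previous step, which is an isomorphism on the two terms $\mathcal{C}(P_i,Y)$; the five lemma then gives $\mathcal{C}(X,Y)\cong{\rm Hom}_R(FX,FY)$. For density, any $M\in R\mbox{-mod}$ has a presentation $R^{m}\to R^{n}\to M\to 0$; transporting the map $R^{m}\to R^{n}$ through the inverse of the equivalence ${\rm add}(P)\simeq R\mbox{-proj}$ to a morphism $Q_0\to Q_1$ in ${\rm add}(P)$ and taking its cokernel $X$ in $\mathcal{C}$, one gets $F(X)\cong M$ since $F$ preserves cokernels. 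Hence $F$ is an equivalence $\mathcal{C}\simeq R\mbox{-mod}$.

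I expect the main obstacle to be the full faithfulness of $F$: faithfulness is soft, but fullness genuinely uses \emph{both} the projectivity of $P$ (to make $F$ right exact) and the two-term presentation of $X$ (to reduce to the already-understood case $Q\in{\rm add}(P)$), glued via the five lemma and the naturality of the comparison map. One must also be mildly careful that $P_0\to P_1\to X\to 0$ being right-exact --- not merely a complex --- is precisely what makes $\mathcal{C}(-,Y)$ produce a genuinely left-exact sequence.
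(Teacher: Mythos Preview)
Your proof is correct and follows the same projectivization strategy as the paper: define $R=\mathcal{C}(P,P)^{\rm op}$, verify that ${\rm add}(P)\simeq R\mbox{-proj}$, and then establish full faithfulness and density of $\mathcal{C}(P,-)$. The only cosmetic difference is in the fullness step---you use a left-exactness/kernel comparison with a presentation of $X$ alone, whereas the paper lifts a given map $u\colon FX\to FY$ explicitly through presentations of \emph{both} $X$ and $Y$ and invokes the universal property of the cokernel---but the underlying content is identical.
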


\begin{proof}
    The ``if" part is trivial, since  $R\mbox{-mod}$ has cokernels and the regular module $R$ is a progenerator.

    For the ``only if" part, we assume that $\mathcal{C}$ has cokernels and a progenerator $P$. Set $R=\mathcal{C}(P, P)^{\rm op}$ to be  the opposite ring of the endomorphism ring of $P$. The following functor
   $$\mathcal{C}(P, -)\colon \mathcal{C} \longrightarrow R\mbox{-mod}$$
   is well defined. Here, we observe that $\mathcal{C}(P, X)$ is a right $\mathcal{C}(P, P)$-module, and thus a left $R$-module.  The proof of the fully-faithfulness and denseness of $\mathcal{C}(P, -)$ is standard, which is presented as follows.

Since $P$ is a generator, it follows that $\mathcal{C}(P, -)$ is faithful.   For its fullness, we first observe by \cite[Chapter~II, Proposition~2.1]{ARS} that it induces an equivalence
\begin{align}\label{proj}
    {\rm add}(P)\simeq R\mbox{-proj}.
\end{align}
Here, we use implicitly the fact that $\mathcal{C}$ has split idempotents. Take two objects $X$ and $Y$ in $\mathcal{C}$  and any $R$-module homomorphism $u\colon \mathcal{C}(P, X)\rightarrow \mathcal{C}(P, Y)$. Since $P$ is a progenerator, we take two right-exact sequences
   $$P_1\stackrel{f} \longrightarrow P_0 \stackrel{g} \longrightarrow X\longrightarrow 0, \mbox{  and } Q_1\stackrel{f'} \longrightarrow Q_0 \stackrel{g'} \longrightarrow Y\longrightarrow 0$$
with $P_i, Q_i\in {\rm add}(P)$. Applying $\mathcal{C}(P, -)$ to these sequences, we obtain projective presentations for  $\mathcal{C}(P, X)$ and $\mathcal{C}(P, Y)$. Then the given homomorphism $u$ lifts  to a  commutative diagram of $R$-modules.
\begin{align}\label{diag:C}
\xymatrix{
\mathcal{C}(P, P_1) \ar[rr]^-{\mathcal{C}(P, f)} \ar@{.>}[d]_-{u_1} && \mathcal{C}(P, P_0) \ar@{.>}[d]^-{u_0} \ar[rr]^-{\mathcal{C}(P, g)} && \mathcal{C}(P, X) \ar[d]^-{u}\ar[rr] && 0\\
\mathcal{C}(P, Q_1) \ar[rr]^-{\mathcal{C}(P, f')} && \mathcal{C}(P, Q_0) \ar[rr]^-{\mathcal{C}(P, g')} && \mathcal{C}(P, Y) \ar[rr] && 0
}
\end{align}
By the equivalence (\ref{proj}), there are unique morphisms $h_i\colon P_i\rightarrow Q_i$ satisfying $\mathcal{C}(P, h_i)=u_i$; moreover, we have $h_0\circ f=f'\circ h_1$. By the universal property of the cokernel, there is a unique morphism $h\colon X\rightarrow Y$ making the following diagram in $\mathcal{C}$ commute.
\[\xymatrix{
P_1\ar[rr]^-{f} \ar[d]_-{h_1} && P_0 \ar[d]^-{h_0} \ar[rr]^-{g} && X \ar@{.>}[d]^-{h} \ar[rr] && 0\\
Q_1\ar[rr]^-{f'} && Q_0 \ar[rr]^-{g'} && Y \ar[rr] && 0
}\]
Applying $\mathcal{C}(P, -)$ to the diagram above and comparing the resulting diagram with (\ref{diag:C}), we infer that $u=\mathcal{C}(P, h)$, as required.

To show that $\mathcal{C}(P, -)$ is dense, we take any finitely presented $R$-module $M$ with a projective presentation $E_1\stackrel{v}\rightarrow E_0\rightarrow M\rightarrow 0$. By the equivalence (\ref{proj}), we may assume that $E_i=\mathcal{C}(P, P_i)$ for $P_i\in {\rm add}(P)$; moreover, there is a unique morphism $k\colon P_1\rightarrow P_0$ satisfying $\mathcal{C}(P, k)=v$. Set $C$ to be the cokernel of $k$. It follows that $\mathcal{C}(P, C)$ is isomorphic to $M$.
\end{proof}

Let $\Sigma$ be an automorphism on $\mathcal{C}$. By a \emph{$\Sigma$-progenerator} in $\mathcal{C}$, we mean a projective object $P$ such that each object $X$ in $\mathcal{C}$ fits into some right-exact sequence $P_0 \rightarrow P_1  \rightarrow X\rightarrow 0$ with each $P_i\in {\rm add}\{\Sigma^n(P)\; |\; n\in \mathbb{Z}\}$.

Let $\Gamma=\oplus_{n\in \mathbb{Z}} \Gamma_n$ be a $\mathbb{Z}$-graded ring. A graded $\Gamma$-module  is usually written as $M=\oplus_{n\in \mathbb{Z}} M_n$. For each integer $p$, we have the \emph{degree-shifted} module $M(p)$, whose underlying $\Gamma$-module is still $M$ and which is graded such that $M(p)_n=M_{p+n}$.  Denote by $\Gamma\mbox{-grmod}$ the category of finitely presented left graded $\Gamma$-modules. For each $p$, we denote by $(p)\colon \Gamma\mbox{-grmod}\rightarrow \Gamma\mbox{-grmod}$ the degree-shift automorphism. We refer to \cite[Chapter~2]{NVO} for details.

We have a variation of Lemma~\ref{lem:ungraded-equiv}.

\begin{prop}\label{prop:equiv-graded}
    Let $\mathcal{C}$ be an additive category with an automorphism $\Sigma$. Then $\mathcal{C}$ has cokernels and an $\Sigma$-progenerator if and only if there is an equivalence $\Phi\colon \mathcal{C} \simeq \Gamma\mbox{-}{\rm grmod}$ for some $\mathbb{Z}$-graded ring $\Gamma$ satisfying $(1)\Phi\simeq \Phi\Sigma$.
\end{prop}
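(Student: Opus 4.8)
\smallskip
\noindent\textbf{Proof proposal.}\quad The plan is to run the graded analogue of the proof of Lemma~\ref{lem:ungraded-equiv}, with the degree shift $(1)$ on $\Gamma\mbox{-grmod}$ playing the role of $\Sigma$. For the ``if'' part, suppose $\Phi\colon\mathcal{C}\simeq\Gamma\mbox{-grmod}$ with $(1)\Phi\simeq\Phi\Sigma$. The category $\Gamma\mbox{-grmod}$ has cokernels, since the cokernel of a morphism between finitely presented graded modules is again finitely presented; hence so does $\mathcal{C}$. The regular graded module $\Gamma$ is projective in $\Gamma\mbox{-grmod}$, because ${\rm Hom}_{\Gamma\mbox{-grmod}}(\Gamma, -)$ is isomorphic to the degree-zero functor, which is right-exact; moreover ${\rm add}\{\Gamma(n)\mid n\in\mathbb{Z}\}$ is exactly the category of finitely generated graded projective modules, through which every object of $\Gamma\mbox{-grmod}$ admits a right-exact presentation. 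Rewriting $(1)\Phi\simeq\Phi\Sigma$ as $\Phi^{-1}(1)\simeq\Sigma\Phi^{-1}$ gives $\Phi^{-1}(\Gamma(n))\cong\Sigma^n\Phi^{-1}(\Gamma)$, so transporting along $\Phi$ shows that $P:=\Phi^{-1}(\Gamma)$ is a $\Sigma$-progenerator of $\mathcal{C}$.

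For the ``only if'' part, let $P$ be a $\Sigma$-progenerator of $\mathcal{C}$. Form the $\mathbb{Z}$-graded ring $\Gamma=\oplus_{n\in\mathbb{Z}}\Gamma_n$ with $\Gamma_n=\mathcal{C}(\Sigma^{-n}P, P)$, the multiplication being composition in $\mathcal{C}$ after applying the appropriate power of $\Sigma$, so that $\Gamma_0=\mathcal{C}(P,P)$ with unit ${\rm id}_P$; this is an instance of the orbit ring of $P$ along $\Sigma$, compare \cite{Len, Berg}. Then define
$$\Phi=\bigoplus_{n\in\mathbb{Z}}\mathcal{C}(\Sigma^{-n}P, -)\colon\mathcal{C}\longrightarrow\Gamma\mbox{-grmod},\qquad\Phi(X)_n=\mathcal{C}(\Sigma^{-n}P, X),$$
with the $\Gamma$-action given by precomposition with shifted morphisms. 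The natural isomorphisms $\Phi(\Sigma X)_n=\mathcal{C}(\Sigma^{-n}P,\Sigma X)\cong\mathcal{C}(\Sigma^{-n-1}P, X)=\Phi(X)_{n+1}$ yield $(1)\Phi\simeq\Phi\Sigma$ directly from the construction, so it only remains to see that $\Phi$ is a well-defined equivalence.

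The core point is the graded analogue of the equivalence (\ref{proj}): $\Phi$ restricts to an equivalence between ${\rm add}\{\Sigma^nP\mid n\in\mathbb{Z}\}$ and the category of finitely generated graded projective $\Gamma$-modules, sending $\Sigma^kP$ to $\Gamma(k)$; this follows from the identification $\mathcal{C}(\Sigma^aP,\Sigma^bP)\cong\Gamma_{b-a}\cong{\rm Hom}_{\Gamma\mbox{-grmod}}(\Gamma(a),\Gamma(b))$ together with \cite[Chapter~II, Proposition~2.1]{ARS} used degreewise. Since each $\Sigma^nP$ is projective, $\Phi$ preserves cokernels, so it sends a right-exact presentation $P_0\to P_1\to X\to 0$ with $P_i\in{\rm add}\{\Sigma^nP\}$ to a finitely generated graded projective presentation of $\Phi(X)$; in particular $\Phi(X)$ lies in $\Gamma\mbox{-grmod}$. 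From here the fully-faithfulness and denseness of $\Phi$ follow exactly as in the proof of Lemma~\ref{lem:ungraded-equiv}: faithfulness because $P$ is a $\Sigma$-generator and cokernels are epimorphisms; fullness by lifting a morphism $\Phi(X)\to\Phi(Y)$ through the chosen presentations, using the equivalence on ${\rm add}\{\Sigma^nP\}$ and the universal property of cokernels; and denseness by realizing a graded projective presentation of an arbitrary $M\in\Gamma\mbox{-grmod}$ as the $\Phi$-image of a morphism in ${\rm add}\{\Sigma^nP\}$ and taking its cokernel. The only work not already contained in Lemma~\ref{lem:ungraded-equiv} is the index bookkeeping needed to make $\Gamma$ a genuine graded ring, $\Phi(X)$ a genuine graded $\Gamma$-module, and $\Sigma$ correspond precisely to $(1)$; I expect this to be routine once the conventions above are fixed, so the only (mild) obstacle is notational.
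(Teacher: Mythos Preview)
Your proposal is correct and follows essentially the same route as the paper: define the orbit ring $\Gamma$ of $P$ along $\Sigma$, form the functor $\Phi$ by taking Hom against the $\Sigma$-shifts of $P$, verify $(1)\Phi\simeq\Phi\Sigma$ from the definition, and then invoke the argument of Lemma~\ref{lem:ungraded-equiv} verbatim (with the graded projective equivalence ${\rm add}\{\Sigma^nP\}\simeq\Gamma\mbox{-grproj}$ replacing~(\ref{proj})). The only cosmetic difference is that the paper writes $\Gamma_n=\mathcal{C}(P,\Sigma^nP)^{\rm op}$ and $\Phi=\bigoplus_n\mathcal{C}(P,\Sigma^n(-))$, whereas you write $\Gamma_n=\mathcal{C}(\Sigma^{-n}P,P)$ and $\Phi(X)_n=\mathcal{C}(\Sigma^{-n}P,X)$; these are identified via $\Sigma^n$, and the ``op'' you omit is exactly what makes precomposition a \emph{left} action, so be sure to insert it when you fix the conventions you flagged as routine.
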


\begin{proof}
    The ``if" part is trivial, since $\Gamma\mbox{-}{\rm grmod}$ has cokernels and the regular module $\Gamma$ is a $(1)$-progenerator.

    For the ``only if" part, we take a $\Sigma$-progenerator $P$. Following \cite[Section~2]{Len} and \cite[Section~2]{Berg}, we define the (opposite) \emph{orbit ring} of $\Sigma$ at $P$ to be
    \begin{align}\label{defn:gamma}
        \Gamma=\Gamma(P; \Sigma)=\bigoplus_{n\in \mathbb{Z}} \mathcal{C}(P, \Sigma^n(P))^{\rm op},
    \end{align}
which is naturally $\mathbb{Z}$-graded.   For $f\in \mathcal{C}(P, \Sigma^n(P))$ and $g\in \mathcal{C}(P, \Sigma^m(P))$, their product $fg$ in $\Gamma$ is given by $\Sigma^n(g)\circ f$. In particular, the unit of $\Gamma$ is given by ${\rm Id}_P$, the identity endomorphism of $P$.

    We have a well-defined functor
    $$\Phi=\bigoplus_{n\in \mathbb{Z}} \mathcal{C}(P, \Sigma^n(-))\colon \mathcal{C}\longrightarrow \Gamma\mbox{-}{\rm grmod}.$$
    We observe that $\Phi(X)(1)=\Phi(\Sigma(X))$ for each object $X$. The same argument in Lemma~\ref{lem:equiv} shows that $\Phi$ is fully faithful and dense.
\end{proof}

Recall that a $\mathbb{Z}$-graded ring $\Gamma=\oplus_{n\in \mathbb{Z}} \Gamma_n$ is called \emph{strongly graded} provided that $\Gamma_n \Gamma_m=\Gamma_{n+m}$ for all $n, m\in \mathbb{Z}$; see \cite{Dade}, \cite[Chapter~3]{NVO} and \cite[Section~1.5]{Hazrat}.  In this situation, we have  a well-known equivalence
\begin{align}\label{equ:Dade}
 \Gamma\mbox{-grmod} \stackrel{\sim}\longrightarrow \Gamma_0\mbox{-mod},
\end{align}
which sends a graded $\Gamma$-module $M$ to its zeroth homogeneous component $M_0$; see \cite[Theorem~2.8]{Dade} and \cite[Theorem~1.5.1]{Hazrat}.

\begin{lem}\label{lem:equiv}
   Let $\mathcal{C}$ be an additive category with an automorphism $\Sigma$. Assume further that it has cokernels and an $\Sigma$-progenerator $P$. Then the orbit ring $\Gamma(P; \Sigma)$ is strongly graded if and only if ${\rm add}(P)={\rm add}(\Sigma(P))$.

   When these equivalent conditions hold, we have an equivalence
   $$\mathcal{C}(P, -)\colon \mathcal{C}\stackrel{\sim}\longrightarrow \Gamma(P; \Sigma)_0\mbox{-}{\rm mod}.$$
\end{lem}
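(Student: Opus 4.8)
The plan is to deduce everything from Proposition~\ref{prop:equiv-graded} together with the Dade-type equivalence (\ref{equ:Dade}), so that the only genuine work is to translate the strong-grading condition on the orbit ring $\Gamma=\Gamma(P;\Sigma)$ of (\ref{defn:gamma}) into a condition on $\mathrm{add}(P)$.

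First I would recall the standard criterion (see \cite{Dade}, \cite[Chapter~3]{NVO} and \cite[Section~1.5]{Hazrat}) that a $\mathbb{Z}$-graded ring $\Gamma=\bigoplus_n\Gamma_n$ is strongly graded if and only if $\Gamma_1\Gamma_{-1}=\Gamma_0=\Gamma_{-1}\Gamma_1$; in fact $1\in\Gamma_1\Gamma_{-1}$ and $1\in\Gamma_{-1}\Gamma_1$ already force $\Gamma_n\Gamma_m=\Gamma_{n+m}$ for all $n,m\in\mathbb{Z}$, by a routine induction. The heart of the matter is then to identify the additive subgroup $\Gamma_1\Gamma_{-1}$ of $\Gamma_0=\mathcal{C}(P,P)^{\mathrm{op}}$. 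Unwinding the twisted multiplication in (\ref{defn:gamma}) and using that $\Sigma$ is an automorphism, so that $g\mapsto\Sigma(g)$ is a bijection $\mathcal{C}(P,\Sigma^{-1}(P))\to\mathcal{C}(\Sigma(P),P)$, one sees that $\Gamma_1\Gamma_{-1}$ consists precisely of those endomorphisms of $P$ that factor through a finite direct sum of copies of $\Sigma(P)$. This is a two-sided ideal of $\Gamma_0$, hence equals $\Gamma_0$ exactly when $\mathrm{id}_P$ so factors, that is, when $P\in\mathrm{add}(\Sigma(P))$. Replacing $\Sigma$ by $\Sigma^{-1}$ gives symmetrically that $\Gamma_{-1}\Gamma_1=\Gamma_0$ if and only if $P\in\mathrm{add}(\Sigma^{-1}(P))$, equivalently $\Sigma(P)\in\mathrm{add}(P)$. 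Combining the two, $\Gamma$ is strongly graded if and only if $P\in\mathrm{add}(\Sigma(P))$ and $\Sigma(P)\in\mathrm{add}(P)$, which is precisely $\mathrm{add}(P)=\mathrm{add}(\Sigma(P))$.

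For the asserted equivalence when these conditions hold, Proposition~\ref{prop:equiv-graded} supplies an equivalence $\Phi=\bigoplus_n\mathcal{C}(P,\Sigma^n(-))\colon\mathcal{C}\stackrel{\sim}{\longrightarrow}\Gamma\mbox{-grmod}$, and composing with (\ref{equ:Dade}), which sends a graded module to its zeroth homogeneous component, yields an equivalence $\mathcal{C}\stackrel{\sim}{\longrightarrow}\Gamma_0\mbox{-mod}$. On an object $X$ this composite gives $\Phi(X)_0=\mathcal{C}(P,\Sigma^0(X))=\mathcal{C}(P,X)$, and a direct check shows that the induced $\Gamma_0=\mathcal{C}(P,P)^{\mathrm{op}}$-action on $\Phi(X)_0$ is the tautological one; hence the composite is naturally isomorphic to the functor $\mathcal{C}(P,-)$, which is therefore an equivalence. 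I expect the only obstacle to be bookkeeping rather than conceptual: one must carefully reconcile the ``opposite'' and twisting conventions in (\ref{defn:gamma}) with the direction of composition, so that the factorization description of $\Gamma_1\Gamma_{-1}$ and the module structure in the last step come out with the correct handedness.
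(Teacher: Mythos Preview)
Your proposal is correct and follows essentially the same approach as the paper: both use the Dade criterion that $1\in\Gamma_1\Gamma_{-1}$ and $1\in\Gamma_{-1}\Gamma_1$ suffice for strong grading, and both obtain the final equivalence by composing Proposition~\ref{prop:equiv-graded} with the Dade equivalence (\ref{equ:Dade}). The one small difference is in the ``only if'' direction: you give a direct characterization of $\Gamma_1\Gamma_{-1}\subseteq\Gamma_0$ as the ideal of endomorphisms of $P$ factoring through $\mathrm{add}(\Sigma(P))$, whereas the paper instead transports the question through the equivalence $\Phi$ of Proposition~\ref{prop:equiv-graded} and invokes the standard fact that $\mathrm{add}(\Gamma)=\mathrm{add}(\Gamma(1))$ in $\Gamma\mbox{-grmod}$ for any strongly graded $\Gamma$. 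Your route is slightly more self-contained; the paper's is slightly more conceptual. Both are short and routine.
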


\begin{proof}
  For the ``if" part,  we  assume that ${\rm add}(P)={\rm add}(\Sigma(P))$. Since $P$ belongs to ${\rm add}(\Sigma(P))$,   there is a finite index set $I$ with morphisms $f_i\colon P\rightarrow \Sigma(P)$ and $g_i\colon \Sigma(P)\rightarrow P$ satisfying ${\rm Id}_P=\sum_{i\in I} g_i\circ f_i$. In the orbit ring $\Gamma=\Gamma(P;  \Sigma)$, we have
  $$1_\Gamma=\sum_{i\in I} f_i\Sigma^{-1}(g_i).$$
  It follows that $1_\Gamma\in \Gamma_{1}\Gamma_{-1}$. Then we have $\Gamma_0=\Gamma_{1}\Gamma_{-1}$, since $\Gamma_{1}\Gamma_{-1}$ is a two-sided ideal of $\Gamma_0$. Similarly, one proves $\Gamma_0=\Gamma_{-1}\Gamma_{1}$. In view of \cite[Proposition~1.6]{Dade},  we conclude  that $\Gamma$ is strongly graded.

  For the ``only if" part, it suffices to observe the following well-known fact: for any strongly graded ring $\Gamma$, we always have ${\rm add}(\Gamma)={\rm add}(\Gamma(1))$ in $\Gamma\mbox{-grmod}$. Then we apply the equivalence in Proposition~\ref{prop:equiv-graded}.

  For the last statement, we just combine the equivalence in Proposition~\ref{prop:equiv-graded} with (\ref{equ:Dade}). Alternatively, we deduce the equivalence by Lemma~\ref{lem:equiv}, since $P$ is a progenerator of $\mathcal{C}$ in this situation.
\end{proof}

\begin{rem}
   Proposition~\ref{prop:equiv-graded} and Lemma~\ref{lem:equiv} still hold when $\Sigma$ is an autoequivalence. Indeed, one might use the stabilization \cite{Hel68, Tie} to replace $\Sigma$ by an automorphism.
\end{rem}

\section{The stabilization of a looped category}

In this section, we prove in Proposition~\ref{prop:stab} that the stabilization of a category with cokernels and a progenerator always has cokernels and an $\Sigma$-progenerator. For stabilizations, we refer to \cite[Chapter~I]{Hel68} and \cite[Section~1]{Tie}.

Let us first recall generalities on stabilizations. By a \emph{looped category} \cite{Bel}, we mean a pair $(\mathcal{C}, \Omega)$ consisting of a category $\mathcal{C}$ and an endofunctor $\Omega$ on $\mathcal{C}$. The looped category is called \emph{stable} if $\Omega$ is an autoequivalence,  and called \emph{strictly stable} if $\Omega$ is an automorphism.

Let $(\mathcal{C}, \Omega)$ and $(\mathcal{D}, \Delta)$ be two looped categories. A \emph{looped functor}
$$(F, \delta)\colon (\mathcal{C}, \Omega)\longrightarrow (\mathcal{D}, \Delta)$$
consists of a functor $F\colon \mathcal{C}\rightarrow \mathcal{D}$ and a natural isomorphism $\delta\colon F\Omega \rightarrow \Delta F$.

Let $(\mathcal{C}, \Omega)$ be a looped category. We define a new category $\mathcal{S}=\mathcal{S}(\mathcal{C}, \Omega)$ as follows. The objects are given by pairs $(X, n)$, which consist of an object $X$ in $\mathcal{C}$ and an integer $n$. The Hom-set from $(X, n)$ to $(Y, m)$ is given by a colimit
$$\mathcal{S}((X, n), (Y, m))={\rm colim}\; \mathcal{C}(\Omega^{p-n}(X), \Omega^{p-m}(Y)),$$
where $p$ runs over all integers satisfying $p\geq {\rm max}\{n, m\}$, and the structure map is induced by the endofunctor $\Omega$. For a morphism $f\in \mathcal{C}(\Omega^{p-n}(X), \Omega^{p-m}(Y))$, its image in $\mathcal{S}((X, n), (Y, m))$ will be denoted by $[f; p]\colon (X, n)\rightarrow (Y, m)$. By the very definition, we have
\begin{align}\label{equ:omega}
    [f; p]=[\Omega^k(f); k+p]
\end{align}
for any $k\geq 0$.

The composition in $\mathcal{S}$ is induced by the one in $\mathcal{C}$. To be more precise, we take any morphism $[g; q]\colon (Y, m)\rightarrow (Z, l)$. Since $[g;q]=[\Omega^k(g); q+k]$ for all $k\geq 0$, we may assume that $q\geq p$. We define the composition by
$$[g;q]\circ [f; p]=[g\circ \Omega^{q-p}(f); q].$$

\begin{lem}\label{lem:stab}
    Consider the morphism $[f; p]\colon (X, n)\rightarrow (Y, m)$ above. Then we have a commutative diagram
    \[\xymatrix{
    (X, n) \ar[d]_-{[{\rm Id}_{\Omega^{p-n}(X)}; p]} \ar[rr]^-{[f; p]} && (Y, m) \ar[d]^-{[{\rm Id}_{\Omega^{p-m}(Y)}; p]}\\
    (\Omega^{p-n}(X), p) \ar[rr]^-{[f;p]} && (\Omega^{p-m}(Y), p)
    }\]
    with vertical morphisms being isomorphisms in $\mathcal{S}$.
\end{lem}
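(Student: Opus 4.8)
The plan is to produce explicit inverses for the two vertical morphisms and then check commutativity directly from the composition rule in $\mathcal{S}$; no conceptual input is needed beyond careful bookkeeping of $\Omega$-powers and the identity (\ref{equ:omega}).

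First I would record a preliminary fact: for any object $(X, n)$ of $\mathcal{S}$ and any integer $p\geq n$, the element $[{\rm Id}_{\Omega^{p-n}(X)}; p]$ is the identity endomorphism of $(X, n)$. Indeed, by (\ref{equ:omega}) it equals $[{\rm Id}_X; n]$, and a direct check with the composition rule shows that $[{\rm Id}_X; n]$ is the identity of $(X, n)$. In particular, for any object $(Z, p)$ we have ${\rm Id}_{(Z, p)}=[{\rm Id}_Z; p]$.

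Next I would treat the left vertical morphism, the right one being identical. The identity morphism ${\rm Id}_{\Omega^{p-n}(X)}$ of $\mathcal{C}$ lies in both $\mathcal{C}(\Omega^{p-n}(X),\, \Omega^{p-p}(\Omega^{p-n}(X)))$ and $\mathcal{C}(\Omega^{p-p}(\Omega^{p-n}(X)),\, \Omega^{p-n}(X))$, so it represents a morphism $a\colon (X, n)\rightarrow (\Omega^{p-n}(X), p)$ as well as a morphism $b\colon (\Omega^{p-n}(X), p)\rightarrow (X, n)$ in $\mathcal{S}$; the first of these is the left vertical arrow $[{\rm Id}_{\Omega^{p-n}(X)}; p]$ of the diagram. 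Applying the composition rule with $q=p$, one gets $b\circ a=[{\rm Id}_{\Omega^{p-n}(X)}\circ \Omega^{0}({\rm Id}_{\Omega^{p-n}(X)}); p]=[{\rm Id}_{\Omega^{p-n}(X)}; p]$, which is ${\rm Id}_{(X, n)}$ by the preliminary fact, and similarly $a\circ b=[{\rm Id}_{\Omega^{p-n}(X)}; p]={\rm Id}_{(\Omega^{p-n}(X), p)}$. Hence $a=[{\rm Id}_{\Omega^{p-n}(X)}; p]$ is an isomorphism with inverse $b$, and the same reasoning shows that $[{\rm Id}_{\Omega^{p-m}(Y)}; p]$ is an isomorphism.

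Finally I would verify that the square commutes, again by the composition rule with $q=p$. Composing along the top edge and then down the right edge yields $[{\rm Id}_{\Omega^{p-m}(Y)}; p]\circ [f; p]=[{\rm Id}_{\Omega^{p-m}(Y)}\circ \Omega^{0}(f); p]=[f; p]$, where $f$ is now regarded as a morphism from $\Omega^{p-n}(X)$ to $\Omega^{p-p}(\Omega^{p-m}(Y))$; thus this composite is the morphism $[f; p]\colon (X, n)\rightarrow (\Omega^{p-m}(Y), p)$. Composing down the left edge and then along the bottom edge yields $[f; p]\circ [{\rm Id}_{\Omega^{p-n}(X)}; p]=[f\circ \Omega^{0}({\rm Id}_{\Omega^{p-n}(X)}); p]=[f; p]$, the very same morphism. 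So the diagram commutes. I do not expect a genuine obstacle in this lemma: the only point requiring care is the notational overloading, whereby one morphism of $\mathcal{C}$ (such as an identity, or $f$ itself) represents several distinct morphisms of $\mathcal{S}$ according to how its source and target are parsed as $\Omega$-powers, together with the standing requirement $q\geq p$ in the composition formula; once these are tracked honestly the computation is immediate.
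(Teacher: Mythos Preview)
Your proof is correct and follows essentially the same approach as the paper's: the paper simply asserts that commutativity is trivial and names the inverse $[{\rm Id}_{\Omega^{p-n}(X)}; p]\colon (\Omega^{p-n}(X), p)\rightarrow (X, n)$, whereas you carry out the bookkeeping explicitly, including the preliminary observation that $[{\rm Id}_{\Omega^{p-n}(X)}; p]$ represents ${\rm Id}_{(X,n)}$ via~(\ref{equ:omega}). There is no substantive difference in strategy.
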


\begin{proof}
    The commutativity is trivial. The inverse of the given morphism $[{\rm Id}_{\Omega^{p-n}(X)}; p]$ is given by $[{\rm Id}_{\Omega^{p-n}(X)}; p]\colon (\Omega^{p-n}(X),  p)\rightarrow (X, n)$.
\end{proof}

\begin{rem}\label{rem:enlarge}
By the vertical isomorphism above, we may always enlarge the second entry of an object in $\mathcal{S}$. Moreover, two objects $(A, n)$ and $(B, m)$ are isomorphic if and only if there is an isomorphism $\Omega^{p-n}(A)\simeq \Omega^{p-m}(B)$ for some $p\geq {\rm max}\{n, m\}$.
\end{rem}

The category $\mathcal{S}$ carries an automorphism $\Sigma$ defined by $\Sigma(X, n)=(X, n+1)$ on objects. It sends $[f;p]\colon (X, n)\rightarrow (Y, m)$ to $[f; p+1]\colon (X, n+1)\rightarrow (Y, m+1)$ on morphisms. Consequently, we have a strictly stable category $(\mathcal{S}, \Sigma^{-1})$.

We have a functor $\mathbf{S}\colon \mathcal{C}\rightarrow \mathcal{S}$,  which sends $X$ to $(X, 0)$, and sends a morphism $f$ to $[f;0]$. For each object $X$, we have a natural isomorphism
$$\theta_X=[{\rm Id}_{\Omega(X)}; 0]\colon \mathbf{S}\Omega(X)=(\Omega(X), 0) \stackrel{\sim}\longrightarrow (X, -1)=\Sigma^{-1}\mathbf{S}(X).$$
In other words, we have a looped functor
$$(\mathbf{S}, \theta)\colon (\mathcal{C}, \Omega)\longrightarrow (\mathcal{S}, \Sigma^{-1})$$
from a looped category to a strictly stable category. This process is called the \emph{stabilization} of $(\mathcal{C}, \Omega)$ by formally inverting $\Omega$.

In what follows, we assume that $\mathcal{C}$ is an additive category and that the endofunctor $\Omega$ is additive. It follows that $\mathcal{S}=\mathcal{S}(\mathcal{C}, \Omega)$ is also additive.

For an additive full subcategory $\mathcal{X}$ of $\mathcal{C}$, we denote by $\mathcal{C}/\mathcal{X}$ the corresponding \emph{factor category} \cite[IV.1]{ARS}. It has the same objects as $\mathcal{C}$ does. The Hom-group $\mathcal{C}/\mathcal{X}(A, B)$ is given by the quotient group $\mathcal{C}(A, B)/\mathcal{I}(A, B)$, where $\mathcal{I}(A, B)$ is the subgroup consisting of morphisms factoring through $\mathcal{X}$. For a morphism $f\colon A\rightarrow B$ in $\mathcal{C}$, its image in $\mathcal{C}/\mathcal{X}(A, B)$ is denoted by $\underline{f}$.

We observe that the stabilization behaves well with respect to factor categories.

\begin{lem} \label{lem:fac}
Assume that $\mathcal{X}\subseteq \mathcal{C}$ is a full additive subcategory satisfying $\Omega(\mathcal{X})\subseteq \mathcal{X}$. Then there is an equivalence
$$\mathcal{S}(\mathcal{C}, \Omega)/{\mathcal{S}(\mathcal{X}, \Omega|_\mathcal{X})}\simeq \mathcal{S}(\mathcal{C}/\mathcal{X}, \Omega).$$
\end{lem}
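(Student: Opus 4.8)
The plan is to produce a single additive functor between the two categories, identify its kernel ideal by an exactness property of filtered colimits, and conclude that it induces the asserted equivalence. Before that, note what the hypothesis $\Omega(\mathcal{X})\subseteq\mathcal{X}$ buys us. First, it makes $\mathcal{S}(\mathcal{X},\Omega|_\mathcal{X})$ a full additive subcategory of $\mathcal{S}(\mathcal{C},\Omega)$: the inclusion $\mathcal{X}\hookrightarrow\mathcal{C}$ induces a fully faithful looped functor whose image is the full subcategory on the objects $(X,n)$ with $X$ in $\mathcal{X}$ (here fullness uses that $\Omega^{p-n}(X)$ lies in $\mathcal{X}$ for $p\geq n$). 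Second, it makes $\Omega$ descend to an additive endofunctor of the factor category $\mathcal{C}/\mathcal{X}$, so that $\mathcal{S}(\mathcal{C}/\mathcal{X},\Omega)$ is defined, and the quotient functor $\pi\colon\mathcal{C}\to\mathcal{C}/\mathcal{X}$, $f\mapsto\underline{f}$, is a strict looped functor, $\pi\Omega=\Omega\pi$. Observe that both $\mathcal{S}(\mathcal{C},\Omega)/\mathcal{S}(\mathcal{X},\Omega|_\mathcal{X})$ and $\mathcal{S}(\mathcal{C}/\mathcal{X},\Omega)$ have the same objects, namely the pairs $(X,n)$ with $X$ an object of $\mathcal{C}$.

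First I would define $F\colon\mathcal{S}(\mathcal{C},\Omega)\to\mathcal{S}(\mathcal{C}/\mathcal{X},\Omega)$ to be the identity on objects and to send $[f;p]$ to $[\underline{f};p]$. Since $\pi$ commutes with $\Omega$, this respects the structure maps defining the Hom-colimits, hence is well defined, and it respects the composition rule $[g;q]\circ[f;p]=[g\circ\Omega^{q-p}(f);q]$; moreover $F$ is additive and full, since $[\underline{g};p]$ lifts to $[g;p]$. The heart of the argument is to compute, for objects $(X,n)$ and $(Y,m)$, the subgroup $\mathcal{J}$ of $\mathcal{S}(\mathcal{C},\Omega)((X,n),(Y,m))$ consisting of morphisms factoring through $\mathcal{S}(\mathcal{X},\Omega|_\mathcal{X})$. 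The claim is that $[f;p]$ lies in $\mathcal{J}$ if and only if, for some $k\geq 0$, the morphism $\Omega^{k}(f)\colon\Omega^{p+k-n}(X)\to\Omega^{p+k-m}(Y)$ factors through an object of $\mathcal{X}$ in $\mathcal{C}$. The ``if'' part is immediate: a factorization $\Omega^k(f)=\beta\circ\alpha$ with $\alpha\colon\Omega^{p+k-n}(X)\to Z$, $\beta\colon Z\to\Omega^{p+k-m}(Y)$ and $Z$ in $\mathcal{X}$, yields, using $(\ref{equ:omega})$, that $[f;p]=[\Omega^k(f);p+k]=[\beta;p+k]\circ[\alpha;p+k]$ factors through the object $(Z,p+k)$ of $\mathcal{S}(\mathcal{X},\Omega|_\mathcal{X})$. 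For the ``only if'' part, given a factorization $(X,n)\to(Z,\ell)\to(Y,m)$ with $Z$ in $\mathcal{X}$, one brings all second entries up to a common $s$, large enough (by Remark~\ref{rem:enlarge} and $(\ref{equ:omega})$) that the composite of the chosen representatives in $\mathcal{C}$ already equals $\Omega^{s-p}(f)$; these representatives factor through $\Omega^{s-\ell}(Z)$, which lies in $\mathcal{X}$ because $\Omega(\mathcal{X})\subseteq\mathcal{X}$ and $s\geq\ell$, so $\Omega^{s-p}(f)$ factors through $\mathcal{X}$.

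Granting the claim, $\mathcal{J}((X,n),(Y,m))$ is precisely the filtered colimit over $p$ of the subgroups $\mathcal{I}(\Omega^{p-n}(X),\Omega^{p-m}(Y))$ of morphisms in $\mathcal{C}$ factoring through $\mathcal{X}$, these subgroups being carried into one another by the structure maps $\Omega$ (once more by $\Omega(\mathcal{X})\subseteq\mathcal{X}$). Since filtered colimits of abelian groups are exact, passing to quotients commutes with the colimit, so
$$\mathcal{S}(\mathcal{C},\Omega)/\mathcal{S}(\mathcal{X},\Omega|_\mathcal{X})\;((X,n),(Y,m))\;=\;{\rm colim}_p\;(\mathcal{C}/\mathcal{X})(\Omega^{p-n}(X),\Omega^{p-m}(Y))\;=\;\mathcal{S}(\mathcal{C}/\mathcal{X},\Omega)\;((X,n),(Y,m)),$$
and one checks this identification is natural and compatible with composition. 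Equivalently: $F$ annihilates $\mathcal{J}$, hence induces a functor $\overline{F}\colon\mathcal{S}(\mathcal{C},\Omega)/\mathcal{S}(\mathcal{X},\Omega|_\mathcal{X})\to\mathcal{S}(\mathcal{C}/\mathcal{X},\Omega)$, which is faithful by the computation, full because $F$ is full, and dense because it is the identity on objects; thus $\overline{F}$ is the desired equivalence. I do not expect a substantial obstacle: the only point demanding care is the colimit bookkeeping, namely checking that a factorization ``visible only in the colimit'' is already realised at a finite stage, and the hypothesis $\Omega(\mathcal{X})\subseteq\mathcal{X}$ is exactly what keeps each ingredient (the subcategory $\mathcal{S}(\mathcal{X},\Omega|_\mathcal{X})$, the descent of $\Omega$, the stability of the ideals $\mathcal{I}$ under the structure maps) well behaved.
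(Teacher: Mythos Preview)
Your proposal is correct and follows essentially the same route as the paper: both define the canonical functor $\mathcal{S}(\mathcal{C},\Omega)\to\mathcal{S}(\mathcal{C}/\mathcal{X},\Omega)$, note it is full and the identity on objects, and then verify that its kernel ideal coincides with the ideal of morphisms factoring through $\mathcal{S}(\mathcal{X},\Omega|_\mathcal{X})$ by tracking a factorization of $\Omega^k(f)$ through an object of $\mathcal{X}$. Your presentation packages this last step via the exactness of filtered colimits of abelian groups, whereas the paper argues it directly, but the content is the same.
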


Here, $\Omega|_\mathcal{X}$ means  the restriction of $\Omega$. Then we have the restricted looped category $(\mathcal{X}, \Omega|_\mathcal{X})$. The stabilization $\mathcal{S}(\mathcal{X}, \Omega|_\mathcal{X})$ is viewed as a full subcategory of $\mathcal{S}(\mathcal{C}, \Omega)$. The endofunctor $\Omega$ on $\mathcal{C}$ induces an endofunctor on the factor category $\mathcal{C}/\mathcal{X}$, which is still denoted by $\Omega$.

\begin{proof}
The canonical functor $\pi\colon \mathcal{S}(\mathcal{C}, \Omega)\rightarrow \mathcal{S}(\mathcal{C}/\mathcal{X}, \Omega)$ is full and dense, which certainly vanishes on $\mathcal{S}(\mathcal{X}, \Omega|_\mathcal{X})$.

It remains to show that any morphism annihilated by $\pi$ necessarily factors through some object in $\mathcal{S}(\mathcal{X}, \Omega|_\mathcal{X})$. By Lemma~\ref{lem:stab}, we might assume that the morphism is of the form $[f; p]\colon (A, p)\rightarrow (B, p)$ with $f\in \mathcal{C}(A, B)$. Then we have $\pi([f; p])=[\underline{f}; p]=0$ in $\mathcal{S}(\mathcal{C}/\mathcal{X}, \Omega)$. It implies that $\Omega^k(\underline{f})=0$ in $\mathcal{C}/\mathcal{X}$ for some $k\geq 0$. In other words, there are morphisms $a\colon X\rightarrow \Omega^k(B)$ and $b\colon \Omega^k(A)\rightarrow X$ with $X\in \mathcal{X}$ satisfying $\Omega^k(f)=a\circ b$. These morphisms give rise to two morphisms $[a; k+p]\colon (X, k+p)\rightarrow (B, p)$ and $[b; k+p]\colon (A, p)\rightarrow (X, k+p)$ in $\mathcal{S}(\mathcal{C}, \Omega)$.  By (\ref{equ:omega}), we have a desire factorization
$$[f;p]=[\Omega^k(f); k+p]=[a; k+p]\circ [b; k+p].$$
Since the object $(X, k+p)$ belongs to $\mathcal{S}(\mathcal{X}, \Omega|_\mathcal{X})$, we are done.
\end{proof}

\begin{lem}\label{lem:stab-cok}
    Assume that $\mathcal{C}$ has cokernels and that $\Omega$ is right-exact. Then $\mathcal{S}$ has cokernels.
\end{lem}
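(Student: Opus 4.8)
The plan is to produce cokernels in $\mathcal{S}$ by lifting cokernels from $\mathcal{C}$, using Lemma~\ref{lem:stab} to put any morphism into the normalized form $[f;p]\colon (A,p)\rightarrow (B,p)$ with $f\in\mathcal{C}(A,B)$. Given such a morphism, let $g\colon B\rightarrow C$ be a cokernel of $f$ in $\mathcal{C}$, giving a right-exact sequence $A\xrightarrow{f}B\xrightarrow{g}C\rightarrow 0$. I claim that $[g;p]\colon (B,p)\rightarrow (C,p)$ is a cokernel of $[f;p]$ in $\mathcal{S}$. First I would check $[g;p]\circ[f;p]=[g\circ f;p]=[0;p]=0$, which is immediate.

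For the universal property, take any morphism $h\colon (B,p)\rightarrow (Y,m)$ in $\mathcal{S}$ with $h\circ[f;p]=0$. After enlarging second entries (Remark~\ref{rem:enlarge}) we may write $h=[h';q]$ for some $q\geq p$ with $h'\colon \Omega^{q-p}(B)\rightarrow \Omega^{q-m}(Y)$. The vanishing $h\circ[f;p]=[h'\circ\Omega^{q-p}(f);q]=0$ means, by definition of the colimit defining the Hom-sets, that $\Omega^k(h'\circ\Omega^{q-p}(f))=\Omega^k(h')\circ\Omega^{k+q-p}(f)=0$ in $\mathcal{C}$ for some $k\geq 0$. Now the key point: since $\Omega$ is right-exact, $\Omega^{k+q-p}$ applied to the right-exact sequence $A\xrightarrow{f}B\xrightarrow{g}C\rightarrow 0$ yields a right-exact sequence $\Omega^{k+q-p}(A)\rightarrow\Omega^{k+q-p}(B)\xrightarrow{\Omega^{k+q-p}(g)}\Omega^{k+q-p}(C)\rightarrow 0$, so $\Omega^{k+q-p}(g)$ is a cokernel of $\Omega^{k+q-p}(f)$ in $\mathcal{C}$. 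The morphism $\Omega^k(h')\colon\Omega^{k+q-p}(B)\rightarrow\Omega^{k+q-m}(Y)$ kills $\Omega^{k+q-p}(f)$, hence factors uniquely as $\Omega^k(h')=t\circ\Omega^{k+q-p}(g)$ for a unique $t\colon\Omega^{k+q-p}(C)\rightarrow\Omega^{k+q-m}(Y)$ in $\mathcal{C}$. Setting $\bar h=[t;k+q]\colon (C,p)\rightarrow(Y,m)$, one computes $\bar h\circ[g;p]=[t\circ\Omega^{k+q-p}(g);k+q]=[\Omega^k(h');k+q]=[h';q]=h$, using (\ref{equ:omega}). For uniqueness of $\bar h$: if $\bar h'\circ[g;p]=0$ in $\mathcal{S}$, normalize $\bar h'=[s;r]$ and push through $\Omega$ far enough that $\Omega^j(s)\circ\Omega^{r-p+j}(g)=0$ in $\mathcal{C}$; since $\Omega^{r-p+j}(g)$ is (still, by right-exactness) an epimorphism — cokernels are epimorphisms — we get $\Omega^j(s)=0$, i.e.\ $\bar h'=0$.

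The one genuine subtlety, which is where I would be most careful, is the bookkeeping of indices in passing between representatives $[-;p]$ of morphisms in $\mathcal{S}$ and actual morphisms in $\mathcal{C}$: the colimit definition of the Hom-sets means every equation in $\mathcal{S}$ only holds in $\mathcal{C}$ after applying a sufficiently large power of $\Omega$, so one must consistently track a single uniform power $\Omega^{k+q-p}$ (or larger) under which all the relevant identities — $\Omega^{k+q-p}(f)$ killed by $\Omega^k(h')$, the right-exactness of the shifted sequence, and the factorization — hold simultaneously. The right-exactness hypothesis on $\Omega$ is used in exactly two places: to know that each shift of the chosen right-exact sequence in $\mathcal{C}$ is again right-exact (so the factorization exists), and implicitly that epimorphisms are preserved (for uniqueness). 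No other obstacle is expected; the construction of cokernels is otherwise a direct transport along the colimit.
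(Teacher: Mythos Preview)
Your proof is correct and follows essentially the same approach as the paper: normalize via Lemma~\ref{lem:stab}, lift a cokernel $g$ of $f$ from $\mathcal{C}$, and use right-exactness of $\Omega$ to obtain both the factorization (since $\Omega^{k+q-p}(g)$ remains a cokernel) and the uniqueness (since $\Omega^{k+q-p}(g)$ remains epic). The only cosmetic difference is that the paper establishes that $[g;p]$ is epic first and then the factorization, whereas you do existence then uniqueness; the content is identical.
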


Here, the right-exactness of $\Omega$ means that it sends right-exact sequences in $\mathcal{C}$  to right-exact sequences in $\mathcal{C}$.

\begin{proof}
We have to show that any morphism in $\mathcal{S}$ has a cokernel.    By Lemma~\ref{lem:stab}, we may assume that the morphism is of the form $[f; p]\colon (A, p)\rightarrow (B,  p)$ with $f\in \mathcal{C}(A, B)$. Assume that $g\colon B\rightarrow C$ is a cokernel of $f$. We claim that $[g;p]\colon (B, p)\rightarrow (C, p)$ is a cokernel of $[f;p]$.

We first show that $[g;p]$ is epic. Take any morphism $[t;q]\colon (C, p)\rightarrow (Z, l)$ with $q\geq {\rm max}\{p, l\}$ and $t\colon \Omega^{q-p}(C)\rightarrow \Omega^{q-l}(Z)$. We assume that
$$[t;q]\circ [g;p]=[t\circ \Omega^{q-p}(g); q]=0.$$
It implies that $\Omega^k(t\circ \Omega^{q-p}(g))=0$ in $\mathcal{C}$ for some $k\geq 0$. Since $\Omega^{k+q-p}(g)$ is epic, we have $\Omega^k(t)=0$, which implies that $[t; q]=0$.

It remains to show that any morphism $[h;q]\colon (B,p)\rightarrow (Z, l)$ satisfying $[h;q]\circ [f;p]=0$ factors through $[g; p]$. Here, we have $q\geq {\rm max}\{p, l\}$ and $h\colon \Omega^{q-p}(B)\rightarrow \Omega^{q-l}(Z)$. The vanishing condition implies that
$$0=\Omega^k(h\circ \Omega^{q-p}(f))=\Omega^k(h)\circ \Omega^{k+q-p}(f)$$
for some $k\geq 0$. Since $\Omega^{k+q-p}(g)$ is a cokernel of $\Omega^{k+q-p}(f)$, there is a unique morphism $u\colon \Omega^{k+q-p}(C)\rightarrow \Omega^{k+q-l}(Z)$ satisfying
$$u\circ \Omega^{k+q-p}(g)=\Omega^k(h).$$
We have a morphism $[u; k+q]\colon (C, p)\rightarrow (Z,l)$. Using (\ref{equ:omega}) twice, we have
$$[h;q]=[\Omega^{k}(h); k+q]=[u; k+q]\circ [\Omega^{k+q-p}(g); k+q]=[u; k+q]\circ [g; p].$$
This completes the proof.
\end{proof}

\begin{lem}\label{lem:abel}
    Assume that $\mathcal{C}$ is an abelian category and the endofunctor $\Omega$ is exact. Then the stabilization $\mathcal{S}$ is also an abelian category.
\end{lem}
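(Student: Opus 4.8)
The plan is to upgrade the conclusion of Lemma~\ref{lem:stab-cok} by showing that the cokernels we constructed there actually behave like cokernels in an abelian category, i.e.\ that every monomorphism in $\mathcal{S}$ is a kernel and every epimorphism is a cokernel, and that kernels exist. Since $\mathcal{C}$ is now abelian and $\Omega$ is exact (hence in particular right-exact), Lemma~\ref{lem:stab-cok} already gives that $\mathcal{S}$ has cokernels, and dually (applying that lemma to the opposite categories, noting that $\Omega$ exact means $\Omega^{\rm op}$ is also exact and that $\mathcal{S}(\mathcal{C}, \Omega)^{\rm op} \simeq \mathcal{S}(\mathcal{C}^{\rm op}, \Omega^{\rm op})$) we get that $\mathcal{S}$ has kernels. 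So the only real content is the exactness axiom: $\mathrm{image} = \mathrm{coimage}$.

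First I would reduce, using Lemma~\ref{lem:stab} and Remark~\ref{rem:enlarge}, to a morphism of the standard form $[f;p]\colon (A,p)\to (B,p)$ with $f\in\mathcal{C}(A,B)$. Factor $f = (A \twoheadrightarrow I \hookrightarrow B)$ through its image $I$ in $\mathcal{C}$. The claim is that $[f;p]$ factors in $\mathcal{S}$ as an epimorphism $(A,p)\to (I,p)$ followed by a monomorphism $(I,p)\to(B,p)$, and that this is the epi-mono factorization. The key point that makes this work is that $\Omega$ is \emph{exact}: applying $\Omega^k$ to the short exact sequences $0\to K\to A\to I\to 0$ and $0\to I\to B\to C\to 0$ in $\mathcal{C}$ keeps them exact for every $k\ge 0$, so colimit arguments of the sort already used in Lemmas~\ref{lem:stab-cok} and~\ref{lem:fac} transfer exactness of sequences through the colimit defining $\mathcal{S}$'s Hom-groups. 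Concretely, I would verify: (i) $[A\to I;p]$ is epic (same argument as the "$[g;p]$ is epic" step, since $A\to I$ is epic and $\Omega$ preserves epics); (ii) $[I\to B;p]$ is monic — this needs that $\Omega$ preserves monics, which follows from exactness, plus a colimit/left-exactness argument dual to Lemma~\ref{lem:stab-cok}; (iii) the standard abelian-category diagram chase showing that for a morphism in this factored form, coimage $\to$ image is an isomorphism, which is immediate once (i) and (ii) hold because in $\mathcal{C}$ the factorization $A\twoheadrightarrow I\hookrightarrow B$ is already the canonical one and $\mathbf{S}$ together with the structure maps is exact in the appropriate sense.

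Alternatively — and this is probably the cleaner write-up — I would argue that $\mathbf{S}\colon\mathcal{C}\to\mathcal{S}$ sends short exact sequences to short exact sequences (a short exact sequence $0\to X\to Y\to Z\to 0$ in $\mathcal{C}$ yields, after applying $\mathbf{S}$, a sequence in $\mathcal{S}$ in which the second map is a cokernel by the proof of Lemma~\ref{lem:stab-cok} and the first map is a kernel by the dual argument), and that every object and every morphism of $\mathcal{S}$ is, up to the isomorphisms of Lemma~\ref{lem:stab}, in the image of $\mathbf{S}$ composed with $\Sigma^{\pm}$. Since $\Sigma$ is an automorphism of $\mathcal{S}$ it is automatically exact, and the abelian-category axioms for $\mathcal{S}$ then follow by pulling everything back to statements about short exact sequences in $\mathcal{C}$.

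**The main obstacle** I anticipate is checking that monomorphisms in $\mathcal{S}$ are kernels (equivalently, that the epi-mono factorization is preserved under the colimit). The subtle point is that a morphism $[f;p]$ being monic in $\mathcal{S}$ means $\Omega^k(f)$ becomes monic in $\mathcal{C}$ \emph{after passing to the colimit}, which a priori is weaker than $\Omega^k(f)$ being monic for some $k$; one must use that $\mathcal{C}(\Omega^{p}(X),-)$ is left exact and that filtered colimits of abelian groups are exact to pull the kernel back. This is where exactness of $\Omega$ (not just right-exactness) is essential and cannot be circumvented. Everything else — existence of kernels and cokernels, biproducts, the $\mathrm{coim}=\mathrm{im}$ isomorphism — is then a routine transfer along $\mathbf{S}$ of the corresponding facts in $\mathcal{C}$.
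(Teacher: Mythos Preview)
Your proposal is correct and follows essentially the same route as the paper: dualize Lemma~\ref{lem:stab-cok} to obtain kernels, reduce via Lemma~\ref{lem:stab} to a morphism $[f;p]$ with $f\in\mathcal{C}(A,B)$, and then use the image factorization $f=\iota\circ\pi$ in $\mathcal{C}$ to produce the canonical factorization $(A,p)\xrightarrow{[\pi;p]}({\rm Im}(f),p)\xrightarrow{[\iota;p]}(B,p)$ in $\mathcal{S}$; this is exactly the paper's argument.

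One remark: the ``main obstacle'' you flag is not actually needed for the proof. You do not have to start from an arbitrary monomorphism in $\mathcal{S}$ and show it arises from one in $\mathcal{C}$. Once kernels and cokernels in $\mathcal{S}$ are computed explicitly (as in the proof of Lemma~\ref{lem:stab-cok} and its dual), the coimage and image of $[f;p]$ are both identified with $({\rm Im}(f),p)$ and the comparison map is the identity; the abelian axiom follows directly. The statement you were worried about---that every monomorphism in $\mathcal{S}$ is isomorphic to one of the form $[f;p]$ with $f$ monic in $\mathcal{C}$---is recorded separately in the paper as Remark~\ref{rem:mono}, and is indeed an easy consequence once the explicit description of kernels is in hand: $[f;p]$ is monic iff its kernel $({\rm Ker}(f),p)$ vanishes iff $\Omega^k({\rm Ker}(f))={\rm Ker}(\Omega^k f)=0$ for some $k$.
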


\begin{proof}
    The same proof in Lemma~\ref{lem:stab-cok} shows that $\mathcal{C}$ has kernels; furthermore, any morphism $[f; p]\colon (A, p)\rightarrow (B, p)$ admits a canonical factorization
    $$(A, p) \stackrel{[\pi; p]}\longrightarrow ({\rm Im}(f), p) \stackrel{[\iota; p]} \longrightarrow (B, p),$$
    where $f=\iota\circ \pi$ is a canonical factorization of $f$ in $\mathcal{C}$. It follows that $\mathcal{S}$ is abelian.
\end{proof}

\begin{rem}\label{rem:mono}
    Keep the assumptions in Lemma~\ref{lem:abel}. We observe that up to isomorphism, any monomorphism in $\mathcal{S}$ is of the form $[f; p] \colon (A, p)\rightarrow (B, p)$ with $f\colon A\rightarrow B$ a monomorphism in $\mathcal{C}$.
\end{rem}

\begin{prop}\label{prop:stab}
    Assume that $\mathcal{C}$ has cokernels and a progenerator $P$, and that $\Omega$ is right-exact and satisfying $\Omega(P)\in {\rm add}(P)$. Then $\mathcal{S}=\mathcal{S}(\mathcal{C}, \Omega)$ has cokernels and an $\Sigma$-progenerator $\mathbf{S}(P)=(P,0)$.
\end{prop}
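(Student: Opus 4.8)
The plan is to verify the two assertions separately: first that $\mathcal{S}$ has cokernels, then that $(P,0)$ is an $\Sigma$-progenerator. For the existence of cokernels, I would simply invoke Lemma~\ref{lem:stab-cok}; the only hypothesis to check is that $\Omega$ is right-exact, which is assumed. So the first half is immediate. The bulk of the work is in establishing the progenerator properties of $\mathbf{S}(P)=(P,0)$, namely that it is projective in $\mathcal{S}$ and that every object admits a right-exact presentation by objects in $\mathrm{add}\{\Sigma^n(P,0)\mid n\in\mathbb{Z}\}$.

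For projectivity of $(P,0)$: given a right-exact sequence in $\mathcal{S}$, by Lemma~\ref{lem:stab} and the description of cokernels in the proof of Lemma~\ref{lem:stab-cok}, I may assume it has the form $(A,p)\xrightarrow{[f;p]}(B,p)\xrightarrow{[g;p]}(C,p)\to 0$ with $g$ a cokernel of $f$ in $\mathcal{C}$. I must show $\mathcal{S}((P,0),-)$ sends this to an exact sequence of abelian groups. Using the colimit description of Hom-sets, $\mathcal{S}((P,0),(B,p))=\mathrm{colim}_r\,\mathcal{C}(\Omega^r(P),\Omega^{r-p}(B))$, and similarly for $A$, $C$. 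The key point is that $\Omega^r(P)\in\mathrm{add}(P)$ for all $r\geq 0$ (iterating $\Omega(P)\in\mathrm{add}(P)$), hence $\Omega^r(P)$ is projective in $\mathcal{C}$; applying the right-exact $\Omega$ to the given sequence keeps it right-exact, so $\mathcal{C}(\Omega^r(P),-)$ is exact on it for each $r$. Since filtered colimits of exact sequences of abelian groups are exact, the colimit sequence $\mathcal{S}((P,0),(A,p))\to\mathcal{S}((P,0),(B,p))\to\mathcal{S}((P,0),(C,p))\to 0$ is exact. A small bookkeeping point is handling the shift in the target index $p$ and making sure the colimit systems are cofinal; this is routine.

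For the generation property: take any object of $\mathcal{S}$; by Remark~\ref{rem:enlarge} it is isomorphic to $(X,0)$ for some $X\in\mathcal{C}$ (after enlarging the second entry, and noting $\Sigma$ will absorb a shift). Since $P$ is a progenerator of $\mathcal{C}$, choose a right-exact sequence $P_1\to P_0\to X\to 0$ in $\mathcal{C}$ with $P_i\in\mathrm{add}(P)$. Because $\Omega$ is right-exact and $\mathbf{S}$ preserves cokernels — here I would note that $\mathbf{S}(g)=[g;0]$ is a cokernel of $\mathbf{S}(f)$ whenever $g$ is a cokernel of $f$, by the argument in Lemma~\ref{lem:stab-cok} — applying $\mathbf{S}$ yields a right-exact sequence $(P_1,0)\to(P_0,0)\to(X,0)\to 0$ in $\mathcal{S}$ with $(P_i,0)\in\mathrm{add}(\mathbf{S}(P))\subseteq\mathrm{add}\{\Sigma^n(\mathbf{S}(P))\mid n\in\mathbb{Z}\}$. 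For a general object $(X,n)$ one pulls this back along the isomorphism $(X,n)\cong(\Omega^{p-n}(X),p)\cong\Sigma^{\,p}(\Omega^{p-n}(X),0)$ from Lemma~\ref{lem:stab} and applies $\Sigma^{p}$ to the above presentation, which lands the projectives in $\mathrm{add}\{\Sigma^k(\mathbf{S}(P))\}$ as required.

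The main obstacle I anticipate is purely organizational rather than conceptual: keeping the index-shuffling in the colimit definition of $\mathcal{S}$-morphisms consistent when proving projectivity, and correctly tracking how the automorphism $\Sigma$ interacts with the reduction-to-$(X,0)$ step so that the generating objects genuinely lie in $\mathrm{add}\{\Sigma^n(P,0)\mid n\in\mathbb{Z}\}$ and not merely in $\mathrm{add}(P,0)$. Everything else — right-exactness of $\Omega$ propagating to $\mathcal{S}$, projectivity of $\Omega^r(P)$, exactness of filtered colimits — is either hypothesis or standard.
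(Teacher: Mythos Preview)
Your proposal is correct and follows essentially the same route as the paper: invoke Lemma~\ref{lem:stab-cok} for cokernels, establish projectivity of $(P,0)$ by writing $\mathcal{S}((P,0),\eta)$ as a filtered colimit of exact sequences $\mathcal{C}(\Omega^j(P),\Omega^{j-p}(-))$, and lift a projective presentation of $X$ in $\mathcal{C}$ to one of $(X,n)$ in $\mathcal{S}$. The only difference is organizational: the paper handles the generation step directly for $(X,n)$ by observing that $P_1\to P_0\to X\to 0$ yields $(P_1,n)\to(P_0,n)\to(X,n)\to 0$ with $(P_i,n)=\Sigma^n(P_i,0)$, whereas you detour through $(X,0)$ first---note that your claim that every object is isomorphic to some $(X,0)$ via Remark~\ref{rem:enlarge} is not literally true (enlarging only moves the index upward), but your subsequent treatment of the general $(X,n)$ via $\Sigma$-shifts repairs this and lands in the same place.
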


\begin{proof}
    By Lemma~\ref{lem:stab-cok}, $\mathcal{S}$ has cokernels. Moreover, by its proof, any right-exact sequence in $\mathcal{S}$ is isomorphic to
    $$\eta\colon (A, p)\stackrel{[f;p]}\longrightarrow (B, p) \stackrel{[g;p]} \longrightarrow (C, p)\longrightarrow 0,$$
    where $g\colon B\rightarrow C$ is a cokernel of $f\colon A\rightarrow B$ in $\mathcal{C}$.

    To prove that $(P, 0)$ is projective, it suffices to show the exactness of $\mathcal{S}((P, 0), \eta)$. This sequence is a direct colimit  of
    $$\eta^j\colon \mathcal{C}(\Omega^j(P), \Omega^{j-p}(A)) \longrightarrow \mathcal{C}(\Omega^j(P), \Omega^{j-p}(B))  \longrightarrow \mathcal{C}(\Omega^j(P), \Omega^{j-p}(C)) \longrightarrow 0.$$
    Here, $j$ runs over all integers satisfying $j\geq {\rm max}\{0, p\}$. Since $\Omega^j(P)$ is projective and $\Omega$ is right-exact, these sequences $\eta^j$ are all exact. This implies the required exactness.

    Take any object $(X, n)$ and a projective presentation $P_1\rightarrow P_0\rightarrow X\rightarrow 0$ with $P_i\in {\rm add}(P)$. Then we have an induced projective presentation
    $$(P_1, n) \longrightarrow (P_0, n) \longrightarrow (X, n) \longrightarrow 0.$$
    Recall that $(P_i, n)=\Sigma^n(P_i)$, which belongs to ${\rm add}\{\Sigma^j(P, 0)\; |\; j\in \mathbb{Z}\}$. This implies that $(P, 0)$ is an $\Sigma$-progenerator of $\mathcal{S}$.
\end{proof}

Recall from (\ref{defn:gamma}) the orbit ring  $\Gamma(\mathbf{S}(P); \Sigma)$  of $\Sigma$ at $\mathbf{S}(P)=(P, 0)$.

\begin{lem}\label{lem:ss}
    Keep the assumptions above. Assume further that the descending chain in $\mathcal{C}$
    $${\rm add}(P)\supseteq {\rm add}(\Omega(P)) \supseteq {\rm add}(\Omega^2(P))\supseteq \cdots$$ stabilizes. Then we have ${\rm add}(\mathbf{S}(P))={\rm add}(\Sigma \mathbf{S}(P))$. Consequently, the orbit ring $\Gamma(\mathbf{S}(P); \Sigma)$ is strongly graded.
\end{lem}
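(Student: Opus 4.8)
The plan is to prove the identity $\mathrm{add}(\mathbf{S}(P))=\mathrm{add}(\Sigma\mathbf{S}(P))$ inside $\mathcal{S}$ by a direct index-chase, and then to read off the ``strongly graded'' conclusion from Lemma~\ref{lem:equiv}. The only inputs beyond Proposition~\ref{prop:stab} are the looped-functor isomorphism $\theta$ together with its iterates, and Remark~\ref{rem:enlarge}. First I would record the relevant family of isomorphisms in $\mathcal{S}$: iterating $\theta$ yields $\mathbf{S}\Omega^{k}\simeq\Sigma^{-k}\mathbf{S}$, hence $(\Omega^{k}(P),k)\simeq(P,0)=\mathbf{S}(P)$ for every $k\geq 0$, and after applying the automorphism $\Sigma$ also $(\Omega^{k}(P),k+1)\simeq\Sigma\mathbf{S}(P)$. (Alternatively, both isomorphisms follow immediately from Remark~\ref{rem:enlarge}, taking $p=k$ resp.\ $p=k+1$.)

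For one inclusion, namely $\mathbf{S}(P)\in\mathrm{add}(\Sigma\mathbf{S}(P))$: the case $k=1$ of the above gives $(\Omega(P),1)\simeq(P,0)$, and since $\Omega(P)\in\mathrm{add}(P)$ in $\mathcal{C}$ and the additive functor $\Sigma\mathbf{S}$ preserves $\mathrm{add}$, we get $(\Omega(P),1)\in\mathrm{add}(\Sigma\mathbf{S}(P))$, whence $\mathbf{S}(P)\in\mathrm{add}(\Sigma\mathbf{S}(P))$. This inclusion uses nothing but $\Omega(P)\in\mathrm{add}(P)$. For the reverse inclusion $\Sigma\mathbf{S}(P)\in\mathrm{add}(\mathbf{S}(P))$ — which is where the stabilization hypothesis on the chain genuinely enters — I would pick $N$ with $\mathrm{add}(\Omega^{N}(P))=\mathrm{add}(\Omega^{N+1}(P))$, so in particular $\Omega^{N}(P)\in\mathrm{add}(\Omega^{N+1}(P))$ in $\mathcal{C}$. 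Applying the additive functor $\Sigma^{N+1}\mathbf{S}$ gives $(\Omega^{N}(P),N+1)\in\mathrm{add}\bigl((\Omega^{N+1}(P),N+1)\bigr)=\mathrm{add}(\mathbf{S}(P))$, where the last equality is the isomorphism $(\Omega^{N+1}(P),N+1)\simeq\mathbf{S}(P)$. Combining this with $(\Omega^{N}(P),N+1)\simeq\Sigma\mathbf{S}(P)$ yields $\Sigma\mathbf{S}(P)\in\mathrm{add}(\mathbf{S}(P))$. Together the two inclusions give $\mathrm{add}(\mathbf{S}(P))=\mathrm{add}(\Sigma\mathbf{S}(P))$.

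Finally, by Proposition~\ref{prop:stab} the category $\mathcal{S}$ has cokernels and $\mathbf{S}(P)=(P,0)$ is a $\Sigma$-progenerator, so the hypotheses of Lemma~\ref{lem:equiv} are met; that lemma then gives that the orbit ring $\Gamma(\mathbf{S}(P);\Sigma)$ is strongly graded precisely because $\mathrm{add}(\mathbf{S}(P))=\mathrm{add}(\Sigma\mathbf{S}(P))$. I do not expect a conceptual obstacle here; the only point requiring care is bookkeeping of the shift directions ($\Sigma$ raises the second coordinate while $\Omega$ lowers it through $\theta$) and making sure the stabilization assumption is invoked exactly for the nontrivial inclusion $\Sigma\mathbf{S}(P)\in\mathrm{add}(\mathbf{S}(P))$.
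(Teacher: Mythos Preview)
Your proof is correct and follows essentially the same approach as the paper: the first inclusion via $(\Omega(P),1)\simeq(P,0)$ is identical, and the final appeal to Lemma~\ref{lem:equiv} matches. For the reverse inclusion your argument is in fact slightly more direct than the paper's: the paper takes an arbitrary object $(X,n)\in\mathrm{add}(\Sigma\mathbf{S}(P))$ and traces through the definitions to exhibit it as a summand of some $(P,0)^{\oplus l'}$, whereas you simply show $\Sigma\mathbf{S}(P)\in\mathrm{add}(\mathbf{S}(P))$ via $(\Omega^{N}(P),N+1)\in\mathrm{add}\bigl((\Omega^{N+1}(P),N+1)\bigr)$, which suffices since $\mathrm{add}(-)$ is transitive.
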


\begin{proof}
We recall that $\mathbf{S}(P)=(P, 0)$ and $\Sigma \mathbf{S}(P)=(P, 1)$. Since $\mathbf{S}(P)$ is isomorphic to $(\Omega(P), 1)$, which belongs to ${\rm add}(\Sigma \mathbf{S}(P))$.  It follows that ${\rm add}(\mathbf{S}(P))\subseteq {\rm add}(\Sigma \mathbf{S}(P))$.

Let us prove ${\rm add}(\Sigma \mathbf{S}(P))\subseteq {\rm add}(\mathbf{S}(P))$. We assume that
$${\rm add}(\Omega^{n_0}(P))={\rm add}(\Omega^{n_0+1}(P))$$
for some $n_0\geq 0$. Take any object $(X, n)$ in ${\rm add}(\Sigma \mathbf{S}(P))$. Assume that
$$(X, n)\oplus (Y, n')\simeq (P, 1)^{\oplus l}$$
for some $l\geq $. In view of Remark~\ref{rem:enlarge}, we might assume $n'=n$. It follows that there is a sufficiently large $p$ such that
$$\Omega^{p-n}(X\oplus Y)\simeq \Omega^{p-1}(P)^{\oplus l}.$$
We will assume that $p$ is larger than $n_0+1$. It follows that ${\rm add}(\Omega^{p-1}(P))={\rm add}(\Omega^{p}(P))$. Then $\Omega^{p-n}(X\oplus Y)$ belongs to ${\rm add}(\Omega^{p}(P))$. Assume that
$$\Omega^{p-n}(X\oplus Y)\oplus Z\simeq \Omega^{p}(P)^{\oplus l'}$$
for some $l'\geq 1$. It follows that
$$(X, n)\oplus (Y, n)\oplus (Z, p)\simeq (P, 0)^{\oplus l'}.$$
In particular, the object $(X, n)$ belongs to ${\rm add}(\mathbf{S}(P))$, which implies ${\rm add}(\Sigma \mathbf{S}(P))\subseteq {\rm add}(\mathbf{S}(P))$. This proves the required equality. The last statement follows from Lemma~\ref{lem:equiv}.
\end{proof}

\section{Leavitt rings and the stabilizations of module categories}

In this section, we prove in Theorem~\ref{thm:s-gr} that the stabilization of a module category by formally inverting a tensor endofunctor is equivalent to the category of graded modules over the Leavitt ring \cite{CWKW}.

Let $R$ be an arbitrary ring, and $_RM_R$ an $R$-$R$-module such that its underlying  left $R$-module $_RM$ is finitely generated projective. Then we have a looped category $(R\mbox{-mod}, M\otimes_R-)$, and form the stabilization $\mathcal{S}=\mathcal{S}(R\mbox{-mod}, M\otimes_R-)$.

Denote by $M^*={\rm Hom}_R({_R}M, {_R}R)$ the \emph{left-dual bimodule}. Its $R$-$R$-bimodule structure is given such that
$$(af)(x)=f(xa) \mbox{ and } (fb)(x)=f(x)b$$
for any $a, b\in R$, $x\in M$ and $f\in M^*$. Since $_RM$ is finitely generated projective, it has a dual basis $\{\alpha_j, \alpha^*_j\}_{j\in J}$, where $J$ is a finite index set, $\alpha_j\in M$ and $\alpha_j^*\in M^*$ such that $\alpha_i^*(\alpha_j) = \delta_{i,j} 1_R$ for any $i, j \in J$. Here, $\delta$ denotes the Kronecker delta. These data satisfy
\begin{align}\label{equ:dual-basis}
x=\sum_{j\in J}\alpha_j^*(x)\alpha_j \mbox{ and } f= \sum_{j\in J} \alpha_j^* f(\alpha_j)
\end{align}
for all $x\in M$ and $f\in M^*$. The \emph{Casimir element}
\begin{align}\label{align:Casimir} 
c=\sum_{j\in J} \alpha_j^*\otimes_R \alpha_j\in M^*\otimes_R M
\end{align}
does not depend on the choice of the dual basis.

For each $p, k\geq 0$, we have a canonical isomorphism of $R$-$R$-bimodules
$$\phi^{p, k}\colon (M^*)^{\otimes_R p}\otimes_R M^{\otimes_R k} \longrightarrow {\rm Hom}_R(M^{\otimes_R p }, M^{\otimes_R k}),$$
which sends $f_{1, p}\otimes_R a_{1, k}$ to the map
$$b_{1, p}\longmapsto f_p(b_1 \cdots f_2(b_{p-1}f_1(b_p))\cdots ) a_{1, k}.$$
Here and later, we write $f_{1, p}=f_1\otimes_R \cdots \otimes_R f_p$, $a_{1,k}=a_1\otimes_R \cdots \otimes_R a_k$ and $b_{1, p}=b_1\otimes_R \cdots \otimes_R b_p$, which are typical tensors.  In particular, $\phi^{0, 0}\colon R\rightarrow {\rm Hom}_R(R, R)$ sends an element $a$ to the map $(b\mapsto ba)$.

The verification of the following compatibility result is straightforward.

\begin{lem}\label{lem:compo}
For $p, k, j\geq 0$, we have
$$\phi^{k, j}(g_{1, k}\otimes_R b_{1, j}) \circ \phi^{p, k}(f_{1, p}\otimes_R a_{1, k})=\phi^{p, j}(f_{1, p}g_k(a_1\cdots g_2(a_{k-1}g_1(a_k))\cdot \cdot \cdot) \otimes_R b_{1,j}),$$
with  $g_{1, k}=g_1\otimes_R \cdots \otimes_R g_k\in (M^*)^{\otimes_R k}$ and $b_{1, j}=b_1\otimes_R \cdots \otimes_R b_j\in M^{\otimes_R j}$. \hfill $\square$
\end{lem}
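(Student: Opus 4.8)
The plan is to verify the identity directly, by evaluating both sides on an arbitrary typical tensor $c_{1,p}=c_1\otimes_R\cdots\otimes_R c_p\in M^{\otimes_R p}$. Since such tensors generate $M^{\otimes_R p}$ and both sides of the asserted equality are elements of ${\rm Hom}_R(M^{\otimes_R p},M^{\otimes_R j})$, hence left $R$-linear maps, agreement on all such $c_{1,p}$ suffices.

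For the left-hand side, I would first apply $\phi^{p,k}(f_{1,p}\otimes_R a_{1,k})$ to $c_{1,p}$; by the defining formula this yields $r\cdot a_{1,k}$, where $r:=f_p(c_1\cdots f_2(c_{p-1}f_1(c_p))\cdots)\in R$ is the iterated scalar determined by $f_{1,p}$ and $c_{1,p}$, and $r\cdot a_{1,k}$ denotes the left $R$-action on $M^{\otimes_R k}$. Now $\phi^{k,j}(g_{1,k}\otimes_R b_{1,j})$ is, by construction, an element of ${\rm Hom}_R(M^{\otimes_R k},M^{\otimes_R j})$, hence left $R$-linear; applying it to $r\cdot a_{1,k}$ therefore gives $r\cdot\bigl(\phi^{k,j}(g_{1,k}\otimes_R b_{1,j})(a_{1,k})\bigr)=r\cdot s\cdot b_{1,j}$, where $s:=g_k(a_1\cdots g_2(a_{k-1}g_1(a_k))\cdots)$. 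This is exactly the value of the composite on $c_{1,p}$.

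For the right-hand side, I would unwind the right $R$-action on $(M^*)^{\otimes_R p}$: the element $f_{1,p}s$ equals $f_1\otimes_R\cdots\otimes_R f_{p-1}\otimes_R(f_p s)$, where $(f_p s)(x)=f_p(x)s$. Applying $\phi^{p,j}(f_{1,p}s\otimes_R b_{1,j})$ to $c_{1,p}$ then gives $(f_p s)\bigl(c_1\cdots f_2(c_{p-1}f_1(c_p))\cdots\bigr)\cdot b_{1,j}$, which equals $f_p\bigl(c_1\cdots f_2(c_{p-1}f_1(c_p))\cdots\bigr)\,s\cdot b_{1,j}=r\cdot s\cdot b_{1,j}$. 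This coincides with the value computed for the left-hand side, so the two maps agree on every $c_{1,p}$, hence everywhere.

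I expect no genuine obstacle; the only thing requiring care is the bookkeeping of where the scalars $r$ and $s$ are inserted into the nested expressions. The two structural facts that make it work are that $\phi^{k,j}(g_{1,k}\otimes_R b_{1,j})$ is automatically left $R$-linear, being an element of ${\rm Hom}_R$, so $r$ can be pulled out of its argument, and that the right $R$-action on $(M^*)^{\otimes_R p}$ touches only the last tensor factor, so that $s$ enters through $f_p$ alone. The computation could instead be organized as an induction on $p$, but the direct evaluation above seems the most transparent.
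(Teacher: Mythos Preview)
Your proposal is correct and is precisely the direct verification that the paper omits as ``straightforward'' (the lemma is stated with a terminal $\square$ and no proof). Your identification of the two structural points---left $R$-linearity of $\phi^{k,j}(g_{1,k}\otimes_R b_{1,j})$ and the fact that the right $R$-action on $(M^*)^{\otimes_R p}$ only touches $f_p$---is exactly what makes the bookkeeping go through.
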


For each $p, k\geq 0$, we have an $R$-$R$-bimodule homomorphism
$$\Delta^{p, k}\colon (M^*)^{\otimes_R p}\otimes_R M^{\otimes_R k} \longrightarrow (M^*)^{\otimes_R p+1}\otimes_R M^{\otimes_R k+1}, $$
which sends $f_{1, p}\otimes_R a_{1, k}$ to $f_{1, p}\otimes_R c\otimes_R a_{1, k}$ with $c$ the Casimir element \eqref{align:Casimir}.

\begin{lem}\label{lem:compare}
We have a commutative diagram of $R$-$R$-bimodules.
\[\xymatrix{
(M^*)^{\otimes_R p}\otimes_R M^{\otimes_R k}\ar[d]_-{\Delta^{p, k}} \ar[rr]^-{\phi^{p, k}} && {\rm Hom}_R(M^{\otimes_R p }, M^{\otimes_R k})\ar[d]^-{M\otimes_R-}\\
(M^*)^{\otimes_R p+1}\otimes_R M^{\otimes_R k+1}\ar[rr]^-{\phi^{p+1, k+1}} &&   {\rm Hom}_R(M^{\otimes_R p+1 }, M^{\otimes_R k+1})
}\]
Here, the vertical map on the right side is induced by the endofunctor $M\otimes_R-$.
\end{lem}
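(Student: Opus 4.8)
The plan is to verify the commutativity by chasing a typical tensor $f_{1,p}\otimes_R a_{1,k}$ through both composites and comparing the resulting maps in ${\rm Hom}_R(M^{\otimes_R p+1}, M^{\otimes_R k+1})$. Going around the top-right corner, we first obtain the map $\phi^{p,k}(f_{1,p}\otimes_R a_{1,k})\colon b_{1,p}\mapsto f_p(b_1\cdots f_1(b_p)\cdots)a_{1,k}$, and then apply $M\otimes_R-$; explicitly, $(M\otimes_R \phi^{p,k}(f_{1,p}\otimes_R a_{1,k}))$ sends a typical tensor $x\otimes_R b_{1,p}\in M^{\otimes_R p+1}$ to $x\otimes_R f_p(b_1\cdots f_1(b_p)\cdots)a_{1,k}$. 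Going around the bottom-left corner, $\Delta^{p,k}(f_{1,p}\otimes_R a_{1,k})=f_{1,p}\otimes_R c\otimes_R a_{1,k}=\sum_{j\in J} f_{1,p}\otimes_R\alpha_j^*\otimes_R\alpha_j\otimes_R a_{1,k}$, and then $\phi^{p+1,k+1}$ of this sends $x\otimes_R b_{1,p}$ to $\sum_{j\in J}\alpha_j^*\bigl(x\, f_p(b_1\cdots f_1(b_p)\cdots)\bigr)\,\alpha_j\otimes_R a_{1,k}$.

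The heart of the matter is therefore to recognize that these two expressions agree, which is exactly the dual-basis identity \eqref{equ:dual-basis}: writing $y=x\,f_p(b_1\cdots f_1(b_p)\cdots)\in M$ (note that $f_p(b_1\cdots f_1(b_p)\cdots)\in R$ acts on the right of $x\in M$), the first composite yields $x\otimes_R f_p(\cdots)a_{1,k}=y\otimes_R a_{1,k}$ after absorbing the scalar across the tensor, while the second yields $\sum_{j\in J}\alpha_j^*(y)\alpha_j\otimes_R a_{1,k}$. These coincide precisely because $\sum_{j\in J}\alpha_j^*(y)\alpha_j=y$. One should be slightly careful here about \emph{which} slot the reconstructed element lands in and about the placement of the inner scalar $f_p(b_1\cdots f_1(b_p)\cdots)$: in $\phi^{p+1,k+1}(f_{1,p}\otimes_R\alpha_j^*\otimes_R\alpha_j\otimes_R a_{1,k})$ the outermost functional is $\alpha_j^*$, applied to $x$ times the value $f_p(b_1\cdots f_1(b_p)\cdots)$ of the inner $p$-fold evaluation, matching the nesting convention in the definition of $\phi^{p,k}$.

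The main (and only genuine) obstacle is bookkeeping: keeping the nested evaluation $b_1\cdots f_1(b_p)\cdots$, the left/right $R$-actions on $M$ and $M^*$, and the tensor slots straight, so that the scalar produced by the length-$p$ evaluation is seen to move freely across $\otimes_R$ and be reabsorbed by the dual basis. There is no conceptual difficulty once the dual-basis relation \eqref{equ:dual-basis} is invoked; the verification is, as stated before Lemma~\ref{lem:compo}, straightforward, and I would present it as a one-line tensor chase with the remark that the key step is \eqref{equ:dual-basis} together with $R$-bilinearity of $\otimes_R$. It is also worth noting in passing that $R$-$R$-bilinearity of all the maps involved has essentially already been used in setting up $\phi^{p,k}$ and $\Delta^{p,k}$, so no separate check of bimodule-linearity of the diagram is needed beyond remarking that both composites are $R$-$R$-bimodule maps by construction.
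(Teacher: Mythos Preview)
Your proposal is correct and follows essentially the same approach as the paper: compute the bottom-left composite $\phi^{p+1,k+1}\circ\Delta^{p,k}$ on a typical tensor, apply it to a typical element of $M^{\otimes_R p+1}$, and invoke the dual-basis identity \eqref{equ:dual-basis} to recognize the result as the top-right composite. The paper's proof is slightly terser (it writes only the bottom-left composite and immediately simplifies via \eqref{equ:dual-basis}), but your extra care with the placement of the scalar $f_p(b_1\cdots f_1(b_p)\cdots)$ and the bookkeeping remarks do no harm.
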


\begin{proof}
     It suffices to observe that the composite map $\phi^{p+1, k+1}\circ \Delta^{p, k}$ sends $f_{1, p}\otimes_R a_{1, k}$ to the following map
    \begin{align*}
    b_{1, p+1}\longmapsto &\sum_{j\in J} \alpha^*_j(b_1 \cdots f_2(b_{p}f_1(b_{p+1}))\cdots ) \alpha_j\otimes_R a_{1, k}\\
    &=b_1\otimes_R f_p(b_2\cdots f_2(b_{p}f_1(b_{p+1}))\cdots ) a_{1, k}.
    \end{align*}
    Here, we use (\ref{equ:dual-basis}).
\end{proof}

Recall from \cite[Definition~2.4]{CWKW} the \emph{Leavitt ring} associated to the $R$-$R$-bimodule $M$ is defined to be
$$L_R(M)=T_R(M^*\oplus M)/{(x\otimes_R f-f(x), c-1_R \; |\; x\in M, f\in M^*)}.$$
Here, $T_R(M^* \oplus M)$ denotes the tensor ring of the bimodule $M^*\oplus M$. The Leavitt ring is $\mathbb{Z}$-graded such that ${\rm deg}(x)=-1$ and ${\rm deg}(f)=1$ for $x\in M$ and $f\in M^*$. We mention that Leavitt rings are certain Cuntz-Primsner rings in the sense of \cite[Definition~3.16]{CO}; see also \cite[Section~2]{CFHL}.

We have the following key isomorphism, which realizes the Leavitt ring $L_R(M)$ as an orbit ring in the stabilization $\mathcal{S}=\mathcal{S}(R\mbox{-mod}, M\otimes_R-)$.

\begin{prop}\label{prop:iso-graded}
    There is an isomorphism of $\mathbb{Z}$-graded rings
    $$L_R(M)\simeq \Gamma(\mathbf{S}(R); \Sigma).$$
\end{prop}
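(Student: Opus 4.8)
The plan is to exhibit a concrete isomorphism of graded rings by matching the graded components of $L_R(M)$ with those of the orbit ring $\Gamma=\Gamma(\mathbf{S}(R);\Sigma)$, and to verify compatibility with multiplication using the identifications already assembled in Lemmas~\ref{lem:compo} and~\ref{lem:compare}. First I would recall that, by the definition \eqref{defn:gamma} of the orbit ring applied to the progenerator $\mathbf{S}(R)=(R,0)$ with $\Sigma$ the degree-shift automorphism of $\mathcal{S}=\mathcal{S}(R\mbox{-mod},M\otimes_R-)$, the degree-$d$ component of $\Gamma$ is $\mathcal{S}((R,0),\Sigma^d(R,0))^{\rm op}=\mathcal{S}((R,0),(R,d))^{\rm op}$, which is the colimit over $p\geq\max\{0,d\}$ of $\mathcal{C}(\Omega^p(R),\Omega^{p-d}(R))={\rm Hom}_R(M^{\otimes_R p},M^{\otimes_R p-d})$, the transition maps being $M\otimes_R-$. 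On the other side, the Leavitt ring $L_R(M)$ in degree $d$ is the image in the quotient of the degree-$d$ part of $T_R(M^*\oplus M)$; a standard normal-form argument (pushing all $M$-letters past $M^*$-letters using $x\otimes f=f(x)$ and then using $c=1_R$ to cancel equal numbers of $M^*$ and $M$) shows that $L_R(M)_d$ is generated by the images of $(M^*)^{\otimes_R p}\otimes_R M^{\otimes_R p-d}$ for $p\gg 0$, with the identification $(M^*)^{\otimes_R p}\otimes_R M^{\otimes_R k}\hookrightarrow (M^*)^{\otimes_R p+1}\otimes_R M^{\otimes_R k+1}$ given precisely by $\Delta^{p,k}$, i.e.\ inserting a Casimir element. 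Thus $L_R(M)_d$ is itself the colimit of $(M^*)^{\otimes_R p}\otimes_R M^{\otimes_R p-d}$ along the maps $\Delta^{p,p-d}$.

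Next I would invoke Lemma~\ref{lem:compare}: the isomorphisms $\phi^{p,k}\colon (M^*)^{\otimes_R p}\otimes_R M^{\otimes_R k}\xrightarrow{\sim}{\rm Hom}_R(M^{\otimes_R p},M^{\otimes_R k})$ form a morphism of directed systems from $(\Delta^{p,k})$ to $(M\otimes_R-)$, hence induce, for each $d$, an isomorphism of abelian groups
$$\Phi_d\colon L_R(M)_d\ \xrightarrow{\ \sim\ }\ \operatorname*{colim}_{p}{\rm Hom}_R(M^{\otimes_R p},M^{\otimes_R p-d})\ =\ \Gamma_d.$$
Assembling over $d\in\mathbb{Z}$ gives a graded $R$-$R$-bimodule isomorphism $\Phi\colon L_R(M)\to\Gamma$. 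It remains to check that $\Phi$ is a ring homomorphism, i.e.\ that it is multiplicative and unital. Unitality is immediate: $1_{L_R(M)}$ sits in degree $0$ as the image of $1_R\in R=(M^*)^{\otimes_R 0}\otimes_R M^{\otimes_R 0}$, and $\phi^{0,0}$ sends it to ${\rm id}_R$, which represents the identity of $\mathcal{S}((R,0),(R,0))$, hence the unit of $\Gamma$. For multiplicativity, fix homogeneous elements represented at a common level $p$ by $f_{1,p}\otimes_R a_{1,k}$ and $g_{1,k}\otimes_R b_{1,j}$ (degrees $k-p$ and $j-k$); I would compute the product in $L_R(M)$ by concatenating and normalizing, and observe that the normal form is exactly the expression appearing on the right-hand side of Lemma~\ref{lem:compo}. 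On the orbit-ring side, the product of $[\phi^{p,k}(f_{1,p}\otimes a_{1,k});p]$ and $[\phi^{k,j}(g_{1,k}\otimes b_{1,j});p]$ is — up to taking the opposite, since $\Gamma$ uses $\mathcal{C}(-,-)^{\rm op}$ and its multiplication $fg=\Sigma^n(g)\circ f$ — precisely the composite $\phi^{k,j}(g_{1,k}\otimes b_{1,j})\circ\phi^{p,k}(f_{1,p}\otimes a_{1,k})$ (after applying $M\otimes_R-$ suitably to land at a common level), which Lemma~\ref{lem:compo} identifies with $\phi^{p,j}$ of the same expression. So the two sides agree.

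The main obstacle, and the place where genuine care is needed, is the bookkeeping of three separate conventions that must be reconciled simultaneously: the opposite-ring twist in the definition \eqref{defn:gamma} of $\Gamma$, the order in which tensor factors and evaluations are composed in $\phi^{p,k}$ and in Lemma~\ref{lem:compo}, and the direction of the multiplication $fg=\Sigma^n(g)\circ f$ in the orbit ring. Getting the normal form of a product in $L_R(M)$ to line up with the right-hand side of Lemma~\ref{lem:compo} on the nose — rather than with its ``reverse'' — is the one computation where a sign-free but genuinely orientation-sensitive check is unavoidable; everything else is the formal colimit argument plus the already-established Lemmas~\ref{lem:compo} and~\ref{lem:compare}. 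I would also note in passing that Lemma~\ref{lem:compare} guarantees the transition maps match up, so no separate verification of well-definedness of $\Phi$ on the colimit is needed beyond citing that lemma. One should also check the well-definedness at the very bottom of the directed system, i.e.\ that the relations $x\otimes_R f=f(x)$ and $c=1_R$ are not just respected eventually but define exactly the kernel of the surjection $\bigoplus_p (M^*)^{\otimes_R p}\otimes_R M^{\otimes_R p-d}\twoheadrightarrow L_R(M)_d$; this is the normal-form claim, which I would prove by a short induction on word length, rewriting any tensor word in $M^*$'s and $M$'s into the standard shape ``all stars, then all non-stars'' and then cancelling a Casimir whenever the number of stars exceeds $\max\{0,d\}$.
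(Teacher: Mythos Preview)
Your approach is essentially the paper's: identify each side degreewise as a colimit of the directed systems $\big((M^*)^{\otimes_R p}\otimes_R M^{\otimes_R k},\Delta^{p,k}\big)$ and $\big({\rm Hom}_R(M^{\otimes_R p},M^{\otimes_R k}),\, M\otimes_R-\big)$, match them via $\phi^{p,k}$ using Lemma~\ref{lem:compare}, and then check multiplicativity using Lemma~\ref{lem:compo}. The one substantive difference is that the paper does not prove the normal-form description of $L_R(M)$ by hand but invokes \cite[Theorem~2.6]{CWKW} for the isomorphism $\Psi\colon{\rm colim}_{p\geq 0}\,(M^*)^{\otimes_R p}\otimes_R T_R(M)\xrightarrow{\sim}L_R(M)$; your ``short induction on word length'' is exactly the content of that theorem, and the nontrivial part is injectivity (that the Casimir-insertion maps $\Delta^{p,k}$ account for \emph{all} relations), not surjectivity. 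One small slip in your bookkeeping: with the paper's conventions ${\rm deg}(f)=1$ for $f\in M^*$ and ${\rm deg}(x)=-1$ for $x\in M$, the element represented by $f_{1,p}\otimes_R a_{1,k}$ lies in degree $p-k$, not $k-p$, and accordingly $\Phi(g_{1,k}\otimes_R b_{1,j})$ is represented at level $k$, not $p$; this is precisely the orientation-sensitive check you flag, and the paper carries it out explicitly.
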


Here, we recall from Proposition~\ref{prop:stab} that $\mathbf{S}(R)=(R, 0)$ is an $\Sigma$-progenerator in $\mathcal{S}$, and that $\Gamma(\mathbf{S}(R); \Sigma)$ is the orbit ring (\ref{defn:gamma}) of $\Sigma$ at $\mathbf{S}(R)$.

\begin{proof}
Recall that  $T_R(M)=\bigoplus_{k\geq 0} M^{\otimes_R k}$. Consider the following homomorphism of $R$-$R$-bimodules.
$$\Delta^p=\bigoplus_{k\geq 0} \Delta^{p, k} \colon (M^*)^{\otimes_R p}\otimes_R T_R(M)\longrightarrow (M^*)^{\otimes_R p+1}\otimes_R T_R(M)$$
We form the colimit
\begin{align}\label{colimit:p}
{\rm colim}_{p\geq 0}\; (M^*)^{\otimes_R p}\otimes_R T_R(M).
\end{align}
For each $p\geq 0$, we have a homomorphism of $R$-$R$-bimodules
$$\psi^p\colon \colon (M^*)^{\otimes_R p}\otimes_R T_R(M)\longrightarrow L_R(M),$$
which sends $f_{1, p}\otimes_R a_{1, k}$ to the product $f_{1, p}a_{1, k}$ in $L_R(M)$. By \cite[Theorem~2.6]{CWKW} these homomorphisms induce an isomorphism
$$\Psi\colon {\rm colim}_{p\geq 0}\; (M^*)^{\otimes_R p}\otimes_R T_R(M)\stackrel{\sim}\longrightarrow L_R(M).$$

Write $\Gamma=\Gamma(\mathbf{S}(R); \Sigma)$. Recall that $\Gamma=\bigoplus_{n\in \mathbb{Z}} \mathcal{S}(\mathbf{S}(R),\Sigma^n \mathbf{S}(R))$. Therefore, we have
\begin{align*}
    \Gamma &=\bigoplus_{n\in \mathbb{Z}} \mathcal{S}((R, 0), (R, n))\\
          &= \bigoplus_{n\in \mathbb{Z}} {\rm colim}_{p\geq {\rm max}\{0, n\}} \;  {\rm Hom}_R(M^{\otimes_R p}, M^{\otimes_R (p-n)})\\
          &\simeq  {\rm colim}_{p\geq 0} \; \bigoplus_{k\geq 0} {\rm Hom}_R(M^{\otimes_R p}, M^{\otimes_R k})\\
          &\simeq {\rm colim}_{p\geq 0}\;  \bigoplus_{k\geq 0} (M^*)^{\otimes_R p}\otimes_R M^{\otimes_R k}\\
    &\simeq {\rm colim}_{p\geq 0}\; (M^*)^{\otimes_R p}\otimes_R T_R(M).
\end{align*}
Here, we use the isomorphisms $\phi^{p, k}$. Moreover, by Lemma~\ref{lem:compare}, we infer that the structure maps in the colimit above coincide with $\Delta^p$. In other words, we obtain an isomorphism of $R$-$R$-bimodules
$$\Phi\colon {\rm colim}_{p\geq 0}\; (M^*)^{\otimes_R p}\otimes_R T_R(M) \stackrel{\sim}\longrightarrow \Gamma.$$

It remains to show that the isomorphism  $\Phi\circ \Psi^{-1}\colon L_R(M) \rightarrow \Gamma$ is compatible with the multiplications. For this end, we take two typical elements $f_{1, p}\otimes_R a_{1, k}$ and $g_{1, q}\otimes_R b_{1, j}$ appearing in the colimit (\ref{colimit:p}). Since we can apply $\Delta^{p,k}$ or $\Delta^{q, j}$ to make the tensors longer, we are able to assume that $k=q$.

We have $\Psi(f_{1, p}\otimes_R a_{1, k})=f_1\cdots f_p a_1\cdots a_k$ and $\Psi(g_{1, k}\otimes_R b_{1, j})=g_1\cdots g_k b_1\cdots b_j$. Their product in $L_R(M)$ is given by
\begin{align}\label{equ:inL}
    &f_1\cdots f_p g_k(a_1\cdots g_2(a_2g_1(a_k))\cdots ) b_1\cdots b_j \\
    &= \Psi(f_{1, p}g_k(a_1\cdots g_2(a_{k-1}g_1(a_k)) \cdots) \otimes_R b_{1, j})\in L_R(M). \nonumber
\end{align}
We observe that
$$\Phi(f_{1, p}\otimes_R a_{1, k})=[ \phi^{p, k}(f_{1, p}\otimes_R a_{1, k}); p]\in \Gamma_{p-k}$$
and
$$\Phi(g_{1, k}\otimes_R b_{1, j})=[ \phi^{k, j}(g_{1, k}\otimes_R b_{1, j}); k]\in \Gamma_{k-j}.$$
Their product in $\Gamma$ is given by the following composition in $\mathcal{S}$.
\begin{align*}
&\Sigma^{p-k}([ \phi^{k, j}(g_{1, k}\otimes_R b_{1, j});k]) \circ [ \phi^{p, k}(f_{1, p}\otimes_R a_{1, k});p]\\
&=[ \phi^{k, j}(g_{1, k}\otimes_R b_{1, j});p] \circ [ \phi^{p, k}(f_{1, p}\otimes_R a_{1, k});p]\\
&=[\phi^{k, j}(g_{1, k}\otimes_R b_{1, j})\circ  \phi^{p, k}(f_{1, p}\otimes_R a_{1, k});p]\\
&=[\phi^{p, j} (f_{1, p}g_k(a_1\cdots g_2(a_{k-1}g_1(a_k) \cdots ) \otimes_R b_{1, j}); p]
\end{align*}
Here, the last equality uses Lemma~\ref{lem:compo}. Then we conclude that
$$\Phi(f_{1, p}\otimes_R a_{1, k}) \Phi(g_{1, k}\otimes_R b_{1, j})=\Phi(f_{1, p}g_k(a_1\cdots g_2(a_{k-1}g_1(a_k)) \cdots) \otimes_R b_{1, j})$$
holds in $\Gamma$.

In view of (\ref{equ:inL}), the equation above might be rewritten as
$$\Phi\circ \Psi^{-1}(z) \; \Phi\circ \Psi^{-1}(w)= \Phi\circ \Psi^{-1}(zw), $$
where $z=\Psi(f_{1, p}\otimes_R a_{1, k})$,  $w=\Psi(g_{1, k}\otimes_R b_{1, j})$ and $zw$ denotes their product in $L_R(M)$. This proves that $\Phi\circ \Psi^{-1}$ is a ring homomorphism. Since it is an isomorphism of $R$-$R$-bimodules and preserves the gradings, it is a graded ring isomorphism.
\end{proof}

For a ring $R$, the factor category $R\mbox{-mod}/{R\mbox{-proj}}$ will be denoted by $R\mbox{-\underline{\rm mod}}$, known as the \emph{stable module category} over $R$. Similarly, for a graded ring $\Gamma$ we have the stable category  $\Gamma\mbox{-\underline{grmod}}$ of graded modules.

\begin{thm}\label{thm:s-gr}
Let $R$ be a ring and $M$ be an $R$-$R$-bimodule such that $_RM$ is finitely generated projective. Then we have an equivalence
$$\mathcal{S}(R\mbox{-{\rm mod}}, M\otimes_R-)\simeq L_R(M)\mbox{-{\rm grmod}},$$
which induces an equivalence
$$\mathcal{S}(R\mbox{-{\rm \underline{mod}}}, M\otimes_R-)\simeq L_R(M)\mbox{-}{\rm \underline{grmod}}.$$
\end{thm}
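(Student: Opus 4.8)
\textbf{Proof plan for Theorem~\ref{thm:s-gr}.}
The first equivalence is an immediate application of the machinery built in Sections~2 and~3, so the plan is to verify its hypotheses and then assemble the pieces. The looped category at hand is $(\mathcal{C}, \Omega)=(R\mbox{-mod}, M\otimes_R-)$. Since $_RM$ is finitely generated projective, the functor $M\otimes_R-$ is exact, in particular right-exact, and it sends the progenerator $R$ to $M$, which lies in $R\mbox{-proj}={\rm add}(R)$; thus the hypotheses of Proposition~\ref{prop:stab} are met, and $\mathcal{S}=\mathcal{S}(R\mbox{-mod}, M\otimes_R-)$ has cokernels together with an $\Sigma$-progenerator $\mathbf{S}(R)=(R,0)$. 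By Proposition~\ref{prop:equiv-graded}, the functor $\Phi=\bigoplus_{n\in\mathbb{Z}}\mathcal{S}(\mathbf{S}(R), \Sigma^n(-))$ is an equivalence $\mathcal{S}\simeq \Gamma\mbox{-grmod}$, where $\Gamma=\Gamma(\mathbf{S}(R);\Sigma)$ is the orbit ring, and it intertwines $\Sigma$ with the degree-shift $(1)$. Finally, Proposition~\ref{prop:iso-graded} identifies $\Gamma$ with the Leavitt ring $L_R(M)$ as $\mathbb{Z}$-graded rings, and a graded ring isomorphism induces an equivalence of the categories of finitely presented graded modules compatible with the shifts. Composing yields the first displayed equivalence $\mathcal{S}(R\mbox{-mod}, M\otimes_R-)\simeq L_R(M)\mbox{-grmod}$.

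For the second equivalence, the idea is to pass to the factor categories by killing the projectives on both sides, using Lemma~\ref{lem:fac}. Take $\mathcal{X}=R\mbox{-proj}\subseteq R\mbox{-mod}$; this is a full additive subcategory, and $\Omega(\mathcal{X})=M\otimes_R\mathcal{X}\subseteq R\mbox{-proj}=\mathcal{X}$ because $_RM$ is finitely generated projective. Hence Lemma~\ref{lem:fac} gives an equivalence
$$\mathcal{S}(R\mbox{-mod}, M\otimes_R-)/{\mathcal{S}(R\mbox{-proj}, M\otimes_R-|_{R\mbox{-proj}})}\simeq \mathcal{S}(R\mbox{-\underline{mod}}, M\otimes_R-).$$
I would then argue that, under the first equivalence $\Phi$, the thick-in-the-additive-sense subcategory $\mathcal{S}(R\mbox{-proj}, M\otimes_R-)$ corresponds exactly to $L_R(M)\mbox{-proj}$, the finitely generated graded projective modules. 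Indeed $\mathcal{S}(R\mbox{-proj},\Omega|)$ is, by Proposition~\ref{prop:stab} applied inside $R\mbox{-proj}$ (or directly from its definition), the additive subcategory ${\rm add}\{\Sigma^n\mathbf{S}(R)\mid n\in\mathbb{Z}\}$ of $\mathcal{S}$, and $\Phi$ sends $\Sigma^n\mathbf{S}(R)$ to the graded free module $\Gamma(n)\cong L_R(M)(n)$; taking ${\rm add}$ of all shifts recovers $L_R(M)\mbox{-proj}$. Therefore the left-hand factor category is equivalent to $L_R(M)\mbox{-grmod}/L_R(M)\mbox{-proj}=L_R(M)\mbox{-\underline{grmod}}$, and chaining the equivalences gives $\mathcal{S}(R\mbox{-\underline{mod}}, M\otimes_R-)\simeq L_R(M)\mbox{-\underline{grmod}}$ as claimed.

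The main point requiring care is the bookkeeping identifying $\mathcal{S}(R\mbox{-proj}, \Omega|)$ with ${\rm add}\{\Sigma^n\mathbf{S}(R)\}$ inside $\mathcal{S}$ and, through $\Phi$, with $L_R(M)\mbox{-proj}$ inside $L_R(M)\mbox{-grmod}$: one must check that every object of $\mathcal{S}(R\mbox{-proj},\Omega|)$ is, up to isomorphism, of the form $(Q,n)$ with $_RQ$ finitely generated projective, hence by Remark~\ref{rem:enlarge} a summand of some $(R^{\oplus r}, n)=\Sigma^n\mathbf{S}(R)^{\oplus r}$, and conversely; and that $\Phi$ respects these identifications, which follows since $\Phi$ is additive and carries $\Sigma$ to $(1)$. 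The remaining subtlety is purely formal: a graded ring isomorphism $L_R(M)\xrightarrow{\sim}\Gamma$ induces an equivalence $L_R(M)\mbox{-grmod}\simeq\Gamma\mbox{-grmod}$ commuting with the degree shifts and sending graded projectives to graded projectives, so it descends to the stable categories. None of these steps is deep; the work is entirely in making the correspondences explicit and coherent with the shift automorphisms.
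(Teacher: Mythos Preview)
Your proposal is correct and follows essentially the same route as the paper: apply Proposition~\ref{prop:stab} to obtain the $\Sigma$-progenerator $\mathbf{S}(R)$, combine Proposition~\ref{prop:equiv-graded} with Proposition~\ref{prop:iso-graded} for the first equivalence, and then use Lemma~\ref{lem:fac} together with the identification of $\mathcal{S}(R\mbox{-proj},\Omega|)$ with the graded projectives under $\Phi$ for the second. Your write-up is more explicit about the bookkeeping (especially the matching of $\mathcal{S}(R\mbox{-proj},\Omega|)$ with ${\rm add}\{\Sigma^n\mathbf{S}(R)\}$ and hence with $L_R(M)\mbox{-grproj}$), but the structure is identical to the paper's proof.
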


\begin{proof}
By applying Proposition~\ref{prop:stab}, the category $\mathcal{S}=\mathcal{S}(R\mbox{-{\rm mod}}, M\otimes_R-)$ has cokernels and an $\Sigma$-progenerator $\mathbf{S}(R)=(R, 0)$. By combining the equivalence in Proposition~\ref{prop:equiv-graded} and the isomorphism in Proposition~\ref{prop:iso-graded}, we infer  the first equivalence. The equivalence sends $\mathcal{S}(R\mbox{-proj}, M\otimes_R-)$ to $L_R(M)\mbox{-grproj}$, the category of finitely generated projective graded $L_R(M)$-modules. By Lemma~\ref{lem:fac}, we infer the second equivalence.
\end{proof}

\begin{rem}
Let $\mathbb{K}$ be a field. Set $R=\mathbb{K}^n$ to be a finite product of $\mathbb{K}$. Then the corresponding Leavitt ring is isomorphic to the Leavitt 
path algebra associated to the quiver with one vertex and $n$-loops; see \cite[Proposition~4.1(2)]{CWKW}.  Consequently, Theorem~\ref{thm:s-gr} describes the graded module category over the Leavitt path algebra as a stabilization of $\mathbb{K}^n\mbox{-mod}$. This description might be viewed as an enhancement of the following fact: the graded Grothendieck group of a Leavitt path algebra is isomorphic to the dimension group in the sense of Krieger; see \cite[Lemma 11]{Hazrat1}  or  \cite[Lemma 3.8]{AP}.
\end{rem}

\begin{rem}
 We assume further that $_RM$ is a finitely generated progenerator. The condition in  Lemma~\ref{lem:ss} is fulfilled. Combining it with Proposition~\ref{prop:iso-graded}, we infer that the Leavitt ring $L_R(M)$ is strongly graded. So, in view of (\ref{equ:Dade}),  we have two equivalences
  $$ \mathcal{S}(R\mbox{-{\rm mod}}, M\otimes_R-)\simeq L_R(M)_0\mbox{-{\rm mod}} \mbox{ and }  \mathcal{S}(R\mbox{-{\rm \underline{mod}}}, M\otimes_R-)\simeq L_R(M)_0\mbox{-{\rm \underline{mod}}} $$
\end{rem}

It seems that the restriction to finitely presented modules in Theorem~\ref{thm:s-gr} is essential.

\begin{rem}
    Consider the whole module category $R\mbox{-Mod}$ and the graded module category $L_R(M)\mbox{-{\rm GrMod}}$. It seems that the stabilization $\mathcal{S}(R\mbox{-{\rm Mod}}, M\otimes_R-)$ does not admit infinite coproducts. Therefore, the precise relation between $\mathcal{S}(R\mbox{-{\rm Mod}}, M\otimes_R-)$ and $L_R(M)\mbox{-{\rm GrMod}}$ is not known.
\end{rem}

\section{Noncommutative differential $1$-forms and singularity categories}

Throughout this section, we fix a ring $\Lambda$, which contains a semisimple subring $E$ such that the left $E$-module $_E\Lambda$ is finitely generated. It follows that $\Lambda$ is left artinian.

Consider the quotient $E$-$E$-bimodule $\overline{\Lambda}=\Lambda/E$. An element $a\in \Lambda$ corresponds to $\overline{a}\in \overline{\Lambda}$. The bimodule of \emph{$E$-relative noncommutative differential $1$-forms} \cite{CQ} is defined to be the following $\Lambda$-$\Lambda$-bimodule
$$\Omega_{\rm nc}=\Omega_{{\rm nc}, \Lambda/E}=\overline{\Lambda}\otimes_E \Lambda.$$
Its right $\Lambda$-action is given by $(\overline{a}\otimes_E x)b=\overline{a}\otimes_E xb$, while its left $\Lambda$-action is nontrivial and given by
$$b(\overline{a}\otimes_E x)=\overline{ba}\otimes_E x-\overline{b}\otimes_E ax.$$
By \cite[Proposition~2.5]{CQ}, $\Omega_{\rm nc}$ is projective on both sides.

For each $\Lambda$-module $X$, we have a short exact sequence.
\begin{align}\label{equ:nondif}
    0\longrightarrow \Omega_{\rm nc}\otimes_\Lambda X \stackrel{\iota_X}\longrightarrow \Lambda\otimes_E X\stackrel{\mu_X} \longrightarrow X \longrightarrow 0
\end{align}
Here, $\iota_X((\overline{a}\otimes_E b)\otimes_\Lambda x)=a\otimes_E bx-1\otimes_E abx$ and $\mu_X(a\otimes_E x)=ax$. We mention that the $\Lambda$-module $\Lambda\otimes_E X$ is projective. Therefore, we may identify $\Omega_{\rm nc}\otimes_\Lambda X $ with the first syzygy of $X$.

Following \cite[Definition~2.1]{Dam}, a ring $R$ is calld \emph{FC} if it is two-sided coherent and  satisfies
$${\rm Ext}_R^1(M, R)=0={\rm Ext}^1_{R^{\rm op}}(N, R)$$
for any finitely presented left $R$-module $M$ and finitely presented right $R$-module $N$. This terminology is justified by the fact that flat modules and coflat modules coincide for FC rings.

An additive category $\mathcal{C}$ is called \emph{Frobenius abelian} \cite{Hel, Hap}  provided that it is an abelian category with enough projective objects and enough injective objects, and that projective objects coincide with injective objects.

We mention that FC rings are coherent analogues of quasi-Frobenius rings.  The following result is well known; see \cite[Theorem~2.4]{Dam}.

\begin{lem}\label{lem:FC}
    Let $R$ be an arbitrary ring. Then the following statements are equivalent.
    \begin{enumerate}
        \item The ring $R$ is FC.
        \item The category $R\mbox{-{\rm mod}}$ is Frobenius abelian.
        \item The category $R^{\rm op}\mbox{-{\rm mod}}$ is Frobenius abelian.
    \end{enumerate}
\end{lem}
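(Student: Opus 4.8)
The plan is to establish the two implications (1) $\Rightarrow$ (2) and (2) $\Rightarrow$ (1), and then read off (3) by symmetry: the defining conditions for an FC ring are invariant under $R \leftrightarrow R^{\rm op}$ (which swaps finitely presented left and right modules), so once (1) $\Leftrightarrow$ (2) is proved, applying it to $R^{\rm op}$ — whose left module category is $R^{\rm op}\mbox{-mod}$ — yields (1) $\Leftrightarrow$ (3). The equivalence (1) $\Leftrightarrow$ (2) is \cite[Theorem~2.4]{Dam}; I indicate the argument. Throughout one uses that, for $R$ left coherent, $R\mbox{-mod}$ is abelian (kernels of morphisms between finitely presented modules are finitely presented) with enough projectives, the projective objects being exactly the finitely generated projective modules, and that for $N$ finitely presented ${\rm Ext}^1$ computed in $R\mbox{-mod}$ agrees with ${\rm Ext}^1_R$, since an extension of finitely presented modules is finitely presented.

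For (1) $\Rightarrow$ (2): two-sided coherence makes both $R\mbox{-mod}$ and $R^{\rm op}\mbox{-mod}$ abelian, and the vanishing ${\rm Ext}^1_R(M, R) = 0$ for finitely presented $M$ says precisely that $_RR$ is injective in $R\mbox{-mod}$; symmetrically $R_R$ is injective in $R^{\rm op}\mbox{-mod}$. Hence $D = {\rm Hom}_R(-, {_RR}) \colon R\mbox{-mod} \to R^{\rm op}\mbox{-mod}$ and its right-hand analogue are both exact; they send finitely generated projectives to finitely generated projectives, and applying them to a finite projective presentation (using right coherence so that the dual is again finitely presented) together with the five lemma for right-exact sequences shows that the biduality transformation ${\rm id} \to D^{\rm op}D$ is an isomorphism on every finitely presented module. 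Thus $D$ is a duality $R\mbox{-mod} \simeq (R^{\rm op}\mbox{-mod})^{\rm op}$. A duality interchanges projectives with injectives and ``enough projectives'' with ``enough injectives''; since $R^{\rm op}\mbox{-mod}$ has enough projectives, $R\mbox{-mod}$ has enough injectives, and its injective objects are the $D$-images of the finitely generated projective right modules, namely the finitely generated projective left modules. Therefore projective $=$ injective in $R\mbox{-mod}$, so it is Frobenius abelian; the same argument applied to $R^{\rm op}$ gives (3).

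For (2) $\Rightarrow$ (1): $R\mbox{-mod}$ abelian forces $R$ left coherent, and projective $=$ injective forces $_RR$ injective in $R\mbox{-mod}$, i.e. ${\rm Ext}^1_R(M, R) = 0$ for finitely presented $M$. For the remaining half — $R$ right coherent and ${\rm Ext}^1_{R^{\rm op}}(N, R) = 0$ for finitely presented right $N$ — observe that ``enough injectives'' together with ``injective $=$ projective'' makes every finitely presented left module embed in a finitely generated projective, so $_RR$ is an injective cogenerator of $R\mbox{-mod}$; hence $D = {\rm Hom}_R(-, {_RR})$ is exact and faithful, and a diagram chase on finite presentations shows that $D$ lands in finitely presented right modules and identifies $R\mbox{-mod}$ with $(R^{\rm op}\mbox{-mod})^{\rm op}$, whence $R^{\rm op}\mbox{-mod}$ is abelian (so $R$ is right coherent) and itself Frobenius abelian, yielding the missing Ext-vanishing.

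I expect the main obstacle to be exactly this last point in the direction (2) $\Rightarrow$ (1): showing that ${\rm Hom}_R(-, {_RR})$ carries finitely presented left modules to finitely presented right modules — equivalently, that $R$ is right coherent — from the hypothesis that $R\mbox{-mod}$ is Frobenius abelian alone. This is where the full force of ``$R\mbox{-mod}$ has enough injectives'' (rather than merely ``projective is injective'') is used, and it is the technical core of \cite[Theorem~2.4]{Dam}; in a write-up one would either reproduce Damiano's argument or simply cite it.
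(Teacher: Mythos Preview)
Your proposal is essentially correct and runs along the same lines as the paper's proof: both pivot on the duality ${\rm Hom}_R(-,{_RR})\colon R\mbox{-mod}\to (R^{\rm op}\mbox{-mod})^{\rm op}$. The paper organizes the logic slightly differently --- it proves (1) $\Rightarrow$ (2)+(3) by citing reflexivity of finitely presented modules from \cite[Theorem~2.4(e)]{Dam}, notes (2)+(3) $\Rightarrow$ (1) is trivial, and then proves (2) $\Rightarrow$ (3) directly --- but the substance is the same.

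The one place where the paper does something sharper is precisely the point you flag as the obstacle: showing, from hypothesis (2) alone, that ${\rm Hom}_R(-,{_RR})$ lands in finitely presented right modules and is an equivalence. You propose a diagram chase and correctly note the circularity risk (you would need right coherence to conclude that a kernel of a map between finitely generated projectives is finitely presented). The paper sidesteps this entirely by invoking the dual of Lemma~\ref{lem:ungraded-equiv}: since $R\mbox{-mod}$ has kernels and $_RR$ is an injective cogenerator, the abstract characterization of module categories applied to $(R\mbox{-mod})^{\rm op}$ yields directly that $\mathcal{C}(-,R)$ is an equivalence onto $S\mbox{-mod}$ for $S=\mathcal{C}(R,R)\cong R^{\rm op}$. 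The point is that the proof of Lemma~\ref{lem:ungraded-equiv} already establishes that the Hom-functor takes values in \emph{finitely presented} modules, using only the copresentation $0\to X\to I_0\to I_1$ with $I_i\in{\rm add}(R)$ --- no prior knowledge of right coherence is needed. So your worry is legitimate, and the clean resolution is to appeal to the categorical lemma already in hand rather than to reproduce Damiano's argument.
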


Here, we identify right $R$-modules with left modules over the opposite ring $R^{\rm op}$. Therefore, by $R^{\rm op}\mbox{-{\rm mod}}$ we mean the category of finitely presented right $R$-modules.

\begin{proof}
Recall that $R$ is left coherent if and only if $R\mbox{-mod}$ is abelian.    For ``(1) $\Rightarrow$ (2)+(3)", we use the fact that any finitely presented  $R$-modules are reflexive; see \cite[Theorem~2.4(e)]{Dam}. The implication ``(2)+(3) $\Rightarrow$ (1)" is trivial.

We will only prove ``(2) $\Rightarrow$ (3)". Assume that $R\mbox{-mod}$ is Frobenius abelian. In particular, it has kernels and an injective cogenerator $R$. By the dual of Lemma~\ref{lem:equiv} and its proof, the functor
$$ {\rm Hom}_R(-, R) \colon R\mbox{-mod} \longrightarrow (R^{\rm op}\mbox{-mod})^{\rm op} $$
is an equivalence. Then we infer (3).
\end{proof}

In what follows, we will concentrate on $\mathcal{S}=\mathcal{S}(\Lambda\mbox{-mod}, \Omega_{\rm nc}\otimes_\Lambda -)$.  Since the left $E$-module $\Lambda$ is finitely generated, the endofunctor $\Omega_{\rm nc}\otimes_\Lambda -$ on $\Lambda\mbox{-mod}$ is well defined.

\begin{prop}\label{prop:Frob}
    The category $\mathcal{S}$ is Frobenius abelian, whose subcategory formed by projective-injective objects is precisely ${\rm add}\{(\Lambda, n)\; |\; n\in \mathbb{Z}\}$.
\end{prop}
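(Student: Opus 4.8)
The plan is to exhibit $\mathcal{S}$ as an abelian category with enough projectives and enough injectives, all of which coincide with $\mathrm{add}\{(\Lambda,n)\;|\;n\in\mathbb Z\}$. The observation that drives everything is that $\Omega_{\rm nc}$ is finitely generated projective \emph{on both sides} \cite{CQ}, so that $\omega:=\Omega_{\rm nc}\otimes_\Lambda-$ is not merely exact but a \emph{Frobenius endofunctor} of $\Lambda\text{-mod}$: by finite-generated-projectivity of ${}_\Lambda\Omega_{\rm nc}$ its right adjoint $\mathrm{Hom}_\Lambda(\Omega_{\rm nc},-)$ is isomorphic to $\omega^\vee:=\Omega_{\rm nc}^\dagger\otimes_\Lambda-$ (with $\Omega_{\rm nc}^\dagger=\mathrm{Hom}_\Lambda({}_\Lambda\Omega_{\rm nc},{}_\Lambda\Lambda)$), and the same functor $\omega^\vee$ is also a \emph{left} adjoint of $\omega$. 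Hence both $\omega$ and $\omega^\vee$ preserve projective and injective modules, and both restrict to endofunctors of finite-length modules. Using that $E$ is semisimple I also record the \emph{cosyzygy sequence}: applying $\mathrm{Hom}_\Lambda(-,X)$ to the bimodule exact sequence $0\to\Omega_{\rm nc}\to\Lambda\otimes_E\Lambda\to\Lambda\to0$ gives a natural exact sequence $0\to X\to\mathrm{Hom}_E(\Lambda,X)\to\omega^\vee X\to0$ in which the coinduced module $\mathrm{Hom}_E(\Lambda,X)$ is injective (as $E$ is semisimple) and of finite length (as ${}_E\Lambda$ is finitely generated projective).

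First, $\mathcal{S}$ is abelian by Lemma~\ref{lem:abel}; by Proposition~\ref{prop:stab} applied to the progenerator $\Lambda$ (note $\omega(\Lambda)=\Omega_{\rm nc}\in\mathrm{add}(\Lambda)$), $\mathcal{S}$ has enough projectives and $(\Lambda,0)=\mathbf S(\Lambda)$ is a $\Sigma$-progenerator; splitting the epimorphism in a right-exact presentation shows that the projective objects of $\mathcal{S}$ are exactly $\mathrm{add}\{(\Lambda,n)\}$. Next I claim that $(J,n)$ is an injective object of $\mathcal{S}$ for every injective $\Lambda$-module $J$. Indeed, a monomorphism out of $(J,n)$ is, up to isomorphism, of the form $[f;p]\colon(A,p)\to(B,p)$ with $f$ a monomorphism and $(A,p)\cong(J,n)$ (Remark~\ref{rem:mono}); enlarging the index by \eqref{equ:omega} and Lemma~\ref{lem:stab} replaces it by $[\omega^{\,k}f;r]\colon(\omega^{r-p}A,r)\to(\omega^{r-p}B,r)$, and for $r$ large $\omega^{r-p}A\cong\omega^{r-n}J$ is injective in $\Lambda\text{-mod}$ (as $\omega$ preserves injectives), so $\omega^{\,k}f$ is a split monomorphism in $\Lambda\text{-mod}$ and hence $[\omega^{\,k}f;r]$ splits in $\mathcal{S}$. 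Applying $\mathbf S$ to the cosyzygy sequence now yields monomorphisms $\mathbf S(X)\hookrightarrow(\mathrm{Hom}_E(\Lambda,X),0)$ into injective objects, and since every object of $\mathcal{S}$ is a $\Sigma$-twist of some $\mathbf S(X)$, $\mathcal{S}$ has enough injectives.

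The crux is that projective objects are injective, for which it suffices to treat $(\Lambda,0)$. Apply $\mathbf S$ to the cosyzygy sequence with $X=\Lambda$:
\[
0\longrightarrow(\Lambda,0)\longrightarrow(\mathrm{Hom}_E(\Lambda,\Lambda),0)\longrightarrow(\omega^\vee\Lambda,0)\longrightarrow0 .
\]
Since $\omega^\vee$ preserves projectives, $\omega^\vee\Lambda=\Omega_{\rm nc}^\dagger$ is a finitely generated projective $\Lambda$-module, so $(\omega^\vee\Lambda,0)\in\mathrm{add}\{(\Lambda,0)\}$ is a projective object of $\mathcal{S}$ and the sequence splits. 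Thus $(\Lambda,0)$ is a direct summand of $(\mathrm{Hom}_E(\Lambda,\Lambda),0)$, which is injective by the previous paragraph; hence $(\Lambda,0)$, and therefore all of $\mathrm{add}\{(\Lambda,n)\}$, consists of injective objects. Conversely, any injective object of $\mathcal{S}$ is, by the enough-injectives step, a direct summand of some $(J,n)$ with $J$ an injective $\Lambda$-module; applying $\mathbf S$ to \eqref{equ:nondif} for $J$ — whose kernel $\omega J$ is injective, so $(\omega J,0)$ is injective and the sequence $0\to(\omega J,0)\to(\Lambda\otimes_E J,0)\to(J,0)\to0$ splits — realizes $(J,0)$ as a summand of the projective object $(\Lambda\otimes_E J,0)$, so that every injective object lies in $\mathrm{add}\{(\Lambda,n)\}$. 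Therefore $\mathcal{S}$ is Frobenius abelian and its subcategory of projective-injective objects is exactly $\mathrm{add}\{(\Lambda,n)\;|\;n\in\mathbb Z\}$.

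I expect the main obstacle to be the first paragraph: recognizing $\omega$ as a Frobenius functor that preserves injectives as well as projectives, which is exactly where the two-sided projectivity of $\Omega_{\rm nc}$ is essential and without which the whole argument fails. A secondary technical point is the careful index enlargement (via \eqref{equ:omega} and Lemma~\ref{lem:stab}) used to reduce a monomorphism in $\mathcal{S}$ to a split monomorphism in $\Lambda\text{-mod}$, together with verifying that $\mathrm{Hom}_E(\Lambda,-)$ really takes values in finite-length injective modules so that the cosyzygy sequence lives in $\Lambda\text{-mod}$.
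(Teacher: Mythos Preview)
The gap is precisely the one you flag in your last paragraph, but it is not a matter of verification: the assertion that $\omega^\vee=\Omega_{\rm nc}^\dagger\otimes_\Lambda-$ is also a \emph{left} adjoint of $\omega$ is false in general. Two-sided projectivity of $\Omega_{\rm nc}$ gives $\omega$ a right adjoint $\Omega_{\rm nc}^*\otimes_\Lambda-$ (via the left dual) and a left adjoint ${}^\vee\Omega_{\rm nc}\otimes_\Lambda-$ (via the right dual), but there is no reason for these bimodules to coincide. Take $\Lambda=k\langle x,y\rangle/(x,y)^2$ with $E=k$, and let $J=D\Lambda$ be the indecomposable injective. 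A direct computation shows that $\Omega_{\rm nc}\otimes_\Lambda J$ is six-dimensional with five-dimensional socle, so $\omega J$ is not injective; likewise ${}_\Lambda\Omega_{\rm nc}^*$ is six-dimensional with five-dimensional top, so $\omega^\vee\Lambda$ is not projective. Consequently your argument that $(J,n)$ is injective in $\mathcal S$ breaks down (after enlarging indices you obtain a monomorphism out of $\omega^{r-n}J$, which need not be injective in $\Lambda\mbox{-mod}$), and the cosyzygy sequence for $X=\Lambda$ does not split in $\mathcal S$ because its cokernel $(\Omega_{\rm nc}^*,0)$ is not projective. Both the ``projectives are injective'' and the ``injectives are projective'' directions collapse.

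The paper avoids injective $\Lambda$-modules entirely. For $P$ projective and a monomorphism $f\colon P\hookrightarrow X$ with cokernel $Y$, one applies the natural sequence~\eqref{equ:nondif} columnwise to obtain a $3\times 3$ diagram; since $P$ and $\Lambda\otimes_E Y$ are projective, both $\iota_P$ and $\Lambda\otimes_E f$ are split monomorphisms, hence so is $\iota_X\circ(\Omega_{\rm nc}\otimes_\Lambda f)=(\Lambda\otimes_E f)\circ\iota_P$, and therefore $\Omega_{\rm nc}\otimes_\Lambda f$ splits. Then $[f;p]=[\Omega_{\rm nc}\otimes_\Lambda f;p+1]$ splits in $\mathcal S$, proving $(P,p)$ injective. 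Enough injectives follows at once from~\eqref{equ:nondif} again: $(X,n)\cong(\Omega_{\rm nc}\otimes_\Lambda X,n+1)\hookrightarrow(\Lambda\otimes_E X,n+1)$.
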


\begin{proof}
    By Lemma~\ref{lem:abel}, the category $\mathcal{S}$ is abelian. By Proposition~\ref{prop:stab}, it has enough projective objects. Moreover, the full subcategory formed by projective objects is precisely ${\rm add}\{(\Lambda, n)\; |\; n\in \mathbb{Z}\}$.

    We claim that each object $(P, n)$ is injective for $P\in \Lambda\mbox{-proj}$ and $n\in \mathbb{Z}$. It suffices to show that any monomorphism starting from $(P, n)$ splits. In view of Lemma~\ref{lem:stab} and Remark~\ref{rem:mono}, we may assume that the monomorphism is of the form
    $$[f; p]\colon (P; p)\longrightarrow (X, p),$$
    where $f\colon P\rightarrow X$ is a monomorphism and $p$ is sufficiently large. Take $g\colon X\rightarrow Y$ to be the cokernel of $f$. Consider the following commutative exact diagram of $\Lambda$-modules.
    \[\xymatrix{
0\ar[r] & P \ar[rr]^-{f} && X \ar[rr]^-{g} && Y \ar[r] & 0  \\
0\ar[r] & \Lambda\otimes_E P \ar[u]^-{\mu_P} \ar[rr]^-{\Lambda\otimes_E f} && \Lambda\otimes_E X \ar[u]^-{\mu_X} \ar[rr]^-{\Lambda\otimes_E g} && \Lambda\otimes_E Y \ar[u]_-{\mu_Y} \ar[r] & 0  \\
    0\ar[r] & \Omega_{\rm nc}\otimes_\Lambda P \ar[u]^-{\iota_P} \ar[rr]^-{\Omega_{\rm nc}\otimes_\Lambda f} && \Omega_{\rm nc}\otimes_\Lambda X  \ar[u]^-{\iota_X} \ar[rr]^-{\Omega_{\rm nc}\otimes_\Lambda g} && \Omega_{\rm nc}\otimes_\Lambda Y \ar[u]_-{\iota_Y} \ar[r] & 0
    }\]
    Since both $P$ and $\Lambda\otimes_E Y$ are projective, the monomorphisms $\iota_P$ and $\Lambda\otimes_E f$ split. It follows from the southwest commutative square that $\Omega_{\rm nc}\otimes_\Lambda f$ splits. In view of (\ref{equ:omega}), $[f; p]=[\Omega_{\rm nc}\otimes_\Lambda f; p+1]$. We deduce that the monomorphism $[f;p]$ splits, which implies the claim.

   To complete the proof,  it remains to show that each object $(X, n)$ admits a monomorphism into $(P, m)$ for some projective $\Lambda$-module $P$. In particular, it would imply that injective objects are projective. Recall from Lemma~\ref{lem:stab} that $(X, n)$ is isomorphic to $(\Omega_{\rm nc}\otimes_\Lambda X, n+1)$. The latter object  embeds into $(\Lambda\otimes_E X, n+1)$ by (\ref{equ:nondif}). This completes the proof.
\end{proof}

We observe that the abelian category $\mathcal{S}$ is not noetherian in general.

\begin{exm}\label{exm:noe}
    Let $\mathbb{K}$ be a field  and $\Lambda$ be a finite dimensional $\mathbb{K}$-algebra. Assume that ${\rm dim}_\mathbb{K}\; \Lambda=d\geq 3$. In $\mathcal{S}$, we have isomorphisms
    $$(\Lambda, 0)\simeq (\Omega_{\rm nc}, 1)\simeq (\Lambda, 1)^{\oplus d-1},$$
    where the isomorphism on the right side follows from an isomorphism $\Omega_{\rm nc}\simeq \Lambda^{\oplus d-1}$ of left $\Lambda$-modules.   Using the composite isomorphism and by induction, we construct an infinite strictly ascending chain of subobjects of $(\Lambda, 0)$. It follows that $\mathcal{S}$ is non-noetherian.
\end{exm}

Recall that the looped category $(\Lambda\mbox{-\underline{\rm mod}}, \Omega_{\rm nc}\otimes_\Lambda-)$ is \emph{left triangulated} \cite{KV, BM}, whose left triangles are induced by short exact sequences of $\Lambda$-modules. To be more precise, each exact sequence $0\rightarrow X\stackrel{f} \rightarrow Y \stackrel{g} \rightarrow Z \rightarrow 0$ gives rise to the following commutative diagram.
\[\xymatrix{
0\ar[r] & \Omega_{\rm nc}\otimes_\Lambda Z \ar@{.>}[d]_-{\omega}\ar[rr]^-{\iota_Z} && \Lambda\otimes_E Z \ar@{.>}[d]\ar[rr]^-{\mu_Z} && Z\ar@{=}[d] \ar[r] & 0\\
0\ar[r] & X \ar[rr]^-{f} && Y \ar[rr]^-{g} && Z \ar[r] & 0
}\]
Then we have the induced left triangle
\begin{align}\label{equ:left-tri}
    \Omega_{\rm nc}\otimes_\Lambda Z \stackrel{\underline{\omega}} \longrightarrow X \stackrel{\underline{f}} \longrightarrow Y \stackrel{\underline{g}} \longrightarrow Z
\end{align}
in $\Lambda\mbox{-\underline{\rm mod}}$. Up to isomorphism, all left triangles arise in this manner.

The left triangulated structure above induces a triangulated structure on the  stabilization $\mathcal{S}(\Lambda\mbox{-\underline{\rm mod}}, \Omega_{\rm nc}\otimes_\Lambda-)$.  Its suspension functor is given by $\Sigma$, which sends $(X, n)$ to $(X, n+1)$. For each integer $n$, the left triangle (\ref{equ:left-tri}) gives rise to a triangle
\begin{align}\label{equ:tri}
    (X, n) \xrightarrow{[\underline{f}; n]} (Y, n) \xrightarrow{[\underline{g}; n]} (Z, n) \xrightarrow{-[\underline{\omega}; n+1]} (X, n+1)
\end{align}
 in $\mathcal{S}(\Lambda\mbox{-\underline{\rm mod}}, \Omega_{\rm nc}\otimes_\Lambda-)$. For details, we refer to \cite[Section~3]{Bel}.

 Denote by $\mathbf{D}^b(\Lambda\mbox{-mod})$ the bounded derived category of $\Lambda\mbox{-mod}$, whose suspension functor is also denoted by $\Sigma$. We identify a $\Lambda$-module $M$ with the stalk complex concentrated in degree zero. Therefore, $\Sigma^n(M)$ is a stalk complex concentrated in degree $-n$.

 The bounded homotopy category $\mathbf{K}^b(\Lambda\mbox{-proj})$ of finitely generated projective modules is viewed as a triangulated subcategory of $\mathbf{D}^b(\Lambda\mbox{-mod})$. Following \cite[Definition~1.2.2]{Buc} and \cite[Definition~1.8]{Orl}, the \emph{singularity category}  of $\Lambda$ is defined to the following Verdier quotient triangulated category
 $$\mathbf{D}_{\rm sg}(\Lambda)=\mathbf{D}^b(\Lambda\mbox{-mod})/{\mathbf{K}^b(\Lambda\mbox{-proj})}.$$

 Denote by $Q\colon \mathbf{D}^b(\Lambda\mbox{-mod})\rightarrow \mathbf{D}_{\rm sg}(\Lambda)$ the quotient functor. The composite functor
 $$\Lambda\mbox{-mod} \hookrightarrow \mathbf{D}^b(\Lambda\mbox{-mod})\stackrel{Q} \longrightarrow  \mathbf{D}_{\rm sg}(\Lambda)$$
 vanishes on projective modules. So, we have a well-defined functor
 $$F\colon \Lambda\mbox{-}\underline{\rm mod} \longrightarrow  \mathbf{D}_{\rm sg}(\Lambda).$$

The following fundamental result is due to \cite[Theorem~6.5.3]{Buc} in  Gorenstein cases and \cite[Section~2]{KV} in general, whose detailed proof is found in \cite[Section~4]{Bel}.

\begin{thm}\label{thm:KV}{\rm (Buchweitz, Keller-Vossieck)}
 The functor above $F$ induces a triangle equivalence
 $$\mathcal{S}(\Lambda\mbox{-\underline{\rm mod}}, \Omega_{\rm nc}\otimes_\Lambda-) \stackrel{\sim} \longrightarrow  \mathbf{D}_{\rm sg}(\Lambda), $$
 which sends $[\underline{f}; n]\colon (X, n) \rightarrow (Y, n)$ to $Q\Sigma^n(f)\colon Q\Sigma^n(X)\rightarrow Q\Sigma^n(Y)$.
\end{thm}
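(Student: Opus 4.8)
The statement is the Buchweitz--Keller--Vossieck theorem; since the paper only cites it, let me sketch a proof. The plan is: (i) obtain the extension $\bar F$ of $F$ from the universal property of the stabilization; (ii) check it is a triangle functor; (iii) prove it is dense; (iv) prove it is fully faithful, which is the heart of the matter. For (i)--(ii) I would first verify that $F\colon \Lambda\mbox{-\underline{\rm mod}}\to \mathbf{D}_{\rm sg}(\Lambda)$ is a left-triangle functor into the strictly stable looped category $(\mathbf{D}_{\rm sg}(\Lambda),\Sigma^{-1})$: applying the quotient functor $Q$ to the exact sequence (\ref{equ:nondif}) and using that $\Lambda\otimes_E X$ is projective, hence zero in $\mathbf{D}_{\rm sg}(\Lambda)$, produces a natural isomorphism $F(\Omega_{\rm nc}\otimes_\Lambda X)\cong \Sigma^{-1}F(X)$; and since the left triangles (\ref{equ:left-tri}) arise from short exact sequences of $\Lambda$-modules, which become distinguished triangles in $\mathbf{D}^b(\Lambda\mbox{-mod})$ and hence in $\mathbf{D}_{\rm sg}(\Lambda)$, the functor $F$ sends left triangles to distinguished triangles. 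By the universal property of the stabilization of a left triangulated category (\cite[Chapter~I]{Hel68}, \cite[Section~1]{Tie}, \cite[Section~3]{Bel}), the looped functor $(\mathbf{S},\theta)$ is initial among left-triangle functors into triangulated categories, so $F$ factors, essentially uniquely, as $\bar F\circ\mathbf{S}$ for a triangle functor $\bar F$ with $\bar F(X,n)=\Sigma^n F(X)$ and $\bar F([\underline f;n])=Q\Sigma^n(f)$. It remains to see $\bar F$ is an equivalence.

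For density I would argue by truncation. Any object of $\mathbf{D}_{\rm sg}(\Lambda)$ is $Q(Z^\bullet)$ for some $Z^\bullet\in\mathbf{D}^b(\Lambda\mbox{-mod})$; after a shift we may assume its cohomology lies in degrees $[-n,0]$, and then, choosing a projective resolution and applying the truncation functor $\tau_{\geq -n}$, we may assume $Z^\bullet=(C^{-n}\to P^{-n+1}\to\cdots\to P^0)$ with $C^{-n}$ a $\Lambda$-module and $P^{-n+1},\dots,P^0$ finitely generated projective. The degreewise split exact sequence of complexes with middle term $Z^\bullet$, subcomplex $(P^{-n+1}\to\cdots\to P^0)$ and quotient the stalk complex $\Sigma^n C^{-n}$ gives a distinguished triangle whose first term is perfect, hence zero in $\mathbf{D}_{\rm sg}(\Lambda)$; therefore $Q(Z^\bullet)\cong \Sigma^n Q(C^{-n})=\bar F(C^{-n},n)$, so $\bar F$ is dense.

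Full faithfulness is the crux. Since $\bar F$ is a triangle functor and every object of $\mathcal{S}:=\mathcal{S}(\Lambda\mbox{-\underline{\rm mod}},\Omega_{\rm nc}\otimes_\Lambda-)$ is literally of the form $(M,n)=\Sigma^n\mathbf{S}(M)$ for a $\Lambda$-module $M$, the usual thick-subcategory (five-lemma) argument applied in each variable reduces the claim to showing that
$$\mathcal{S}((M,0),(N,c))\stackrel{\sim}\longrightarrow \mathbf{D}_{\rm sg}(\Lambda)\big(Q(M),\Sigma^c Q(N)\big)$$
for all $\Lambda$-modules $M,N$ and all $c\in\mathbb{Z}$. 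Writing $\Omega=\Omega_{\rm nc}\otimes_\Lambda-$ and iterating the isomorphism $F\Omega\cong\Sigma^{-1}F$ from step~(i) to identify, for $p\geq\max\{0,c\}$, the group $\mathbf{D}_{\rm sg}(\Lambda)(Q\Omega^p M,Q\Omega^{p-c}N)$ with the right-hand side, this becomes the key lemma: for $\Lambda$-modules $A,B$ the natural maps $\underline{\mathrm{Hom}}_\Lambda(\Omega^q A,\Omega^q B)\to\mathbf{D}_{\rm sg}(\Lambda)(QA,QB)$ assemble to an isomorphism $\varinjlim_q\underline{\mathrm{Hom}}_\Lambda(\Omega^q A,\Omega^q B)\xrightarrow{\ \sim\ }\mathbf{D}_{\rm sg}(\Lambda)(QA,QB)$. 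I would prove this via the Verdier calculus of fractions for $\mathbf{D}^b(\Lambda\mbox{-mod})/\mathbf{K}^b(\Lambda\mbox{-proj})$: a morphism $QA\to QB$ is represented by a roof $A\xleftarrow{s}C^\bullet\xrightarrow{g}B$ with $\mathrm{cone}(s)$ a perfect complex of bounded amplitude, and one shows — using the triangles (\ref{equ:nondif}), which let high syzygies absorb perfect complexes, together with a uniform amplitude bound — that after applying $\Omega^p$ for $p$ large the roof collapses to an honest $\Lambda$-module homomorphism, giving surjectivity; dually, completing a module homomorphism that dies in $\mathbf{D}_{\rm sg}(\Lambda)$ to a triangle and resolving far enough shows it is already stably zero, giving injectivity. (Alternatively, one may invoke the general description of the stabilization of a left triangulated category from \cite{Bel}.)

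The main obstacle is exactly this key lemma: one must control, uniformly over morphisms, the perfect complexes entering the Verdier fractions so that a single power of the syzygy functor $\Omega_{\rm nc}\otimes_\Lambda-$ trivialises them. Everything else — the construction of $\bar F$ from the universal property, and the two truncation/five-lemma reductions — is formal.
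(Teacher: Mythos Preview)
The paper does not give its own proof of this theorem: it is stated with attribution and the reader is referred to \cite[Theorem~6.5.3]{Buc}, \cite[Section~2]{KV} and, for a detailed proof, \cite[Section~4]{Bel}. Your sketch correctly identifies this and outlines precisely the standard argument found in those references: construct the extension $\bar F$ via the universal property of the stabilization of a left triangulated category, obtain density by brutal truncation of a projective resolution, and reduce full faithfulness to the key colimit identity $\varinjlim_q\underline{\mathrm{Hom}}_\Lambda(\Omega^q A,\Omega^q B)\cong \mathbf{D}_{\rm sg}(\Lambda)(QA,QB)$, proved via the calculus of fractions. There is nothing to compare against in the paper itself, and your sketch is sound; the only place where real work is hidden is, as you note, the uniform control of the perfect cones in the Verdier fractions, which is exactly what \cite[Section~4]{Bel} supplies.
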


Here, we identify the induced tensor endofunctor $\Omega_{\rm nc}\otimes_\Lambda-$ on $\Lambda\mbox{-\underline{\rm mod}}$ with the syzygy endofunctor.

Recall that $\mathcal{S}=\mathcal{S}(\Lambda\mbox{-mod}, \Omega_{\rm nc}\otimes_\Lambda -)$. Denote by $\underline{\mathcal{S}}$ its stable category, that is, the factor category of $\mathcal{S}$ modulo projective objects. Since $\mathcal{S}$ is Frobenius, the stable category $\underline{\mathcal{S}}$ is canonically triangulated \cite[Chapter~I, Section~2]{Hap}.

\begin{prop}\label{prop:tri-equ}
    There is a triangle equivalence $\underline{\mathcal{S}}\simeq \mathcal{S}(\Lambda\mbox{-\underline{\rm mod}}, \Omega_{\rm nc}\otimes_\Lambda-) $.
\end{prop}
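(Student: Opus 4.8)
The plan is to deduce everything from the factorization result Lemma~\ref{lem:fac}, together with the Frobenius structure of Proposition~\ref{prop:Frob}. Write $\Omega=\Omega_{\rm nc}\otimes_\Lambda-$, which is exact since $\Omega_{\rm nc}$ is projective as a right $\Lambda$-module. Because the sequence (\ref{equ:nondif}) splits whenever $X$ is projective, $\Omega$ maps $\Lambda\mbox{-proj}$ into itself, so Lemma~\ref{lem:fac} applies with $\mathcal{C}=\Lambda\mbox{-mod}$ and $\mathcal{X}=\Lambda\mbox{-proj}$, and yields an additive equivalence
$$\pi\colon \mathcal{S}/{\mathcal{S}(\Lambda\mbox{-proj}, \Omega|_{\Lambda\mbox{-proj}})}\stackrel{\sim}\longrightarrow \mathcal{S}(\Lambda\mbox{-\underline{\rm mod}}, \Omega).$$
Now I would identify the full subcategory $\mathcal{S}(\Lambda\mbox{-proj}, \Omega|_{\Lambda\mbox{-proj}})$ of $\mathcal{S}$: its objects are the $(P,n)$ with $P$ finitely generated projective, and since any such $P$ is a direct summand of a finite sum of copies of $\Lambda$, this subcategory equals ${\rm add}\{(\Lambda,n)\mid n\in\mathbb{Z}\}$, which by Proposition~\ref{prop:Frob} is precisely the subcategory of projective objects of $\mathcal{S}$, and also its subcategory of injective objects. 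Hence the left-hand side above is by definition the stable category $\underline{\mathcal{S}}$, and $\pi$ becomes an additive equivalence $\underline{\mathcal{S}}\stackrel{\sim}\longrightarrow\mathcal{S}(\Lambda\mbox{-\underline{\rm mod}}, \Omega)$.

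It remains to promote $\pi$ to a triangle equivalence; since $\pi$ is already an equivalence, it suffices to check that it intertwines the suspensions and carries distinguished triangles to distinguished triangles. The suspension on $\mathcal{S}(\Lambda\mbox{-\underline{\rm mod}}, \Omega)$ is the index shift $\Sigma$. For the Happel suspension on $\underline{\mathcal{S}}$, I would take $(X,n)$, combine the isomorphism $(X,n)\simeq(\Omega_{\rm nc}\otimes_\Lambda X,n+1)$ from Lemma~\ref{lem:stab} with the short exact sequence (\ref{equ:nondif}) shifted to the second entry $n+1$; its middle term $(\Lambda\otimes_E X,n+1)$ is projective, hence injective in $\mathcal{S}$, so this conflation computes the Happel suspension and gives back $(X,n+1)$. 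Thus the two suspension functors agree, and since $\pi$ acts as the identity on objects it strictly commutes with them.

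For the distinguished triangles, I would start from an arbitrary distinguished triangle of $\underline{\mathcal{S}}$; by Remark~\ref{rem:mono} and Lemma~\ref{lem:abel} it is, up to isomorphism, the Happel triangle of a short exact sequence $0\to (X,n)\stackrel{[f;n]}\to (Y,n)\stackrel{[g;n]}\to (Z,n)\to 0$ in $\mathcal{S}$ that is the image of a short exact sequence $0\to X\stackrel{f}\to Y\stackrel{g}\to Z\to 0$ in $\Lambda\mbox{-mod}$. On the one hand, the latter sequence produces the left triangle (\ref{equ:left-tri}) in $\Lambda\mbox{-\underline{\rm mod}}$, via a lift $\sigma\colon\Lambda\otimes_E Z\to Y$ of ${\rm id}_Z$ along $\mu_Z$ and the induced map $\omega\colon\Omega_{\rm nc}\otimes_\Lambda Z\to X$, and hence the triangle (\ref{equ:tri}) in $\mathcal{S}(\Lambda\mbox{-\underline{\rm mod}}, \Omega)$, whose connecting morphism is $-[\underline{\omega};n+1]$ (composed with the canonical isomorphism $(Z,n)\simeq(\Omega_{\rm nc}\otimes_\Lambda Z,n+1)$). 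On the other hand, Happel's connecting morphism $(Z,n)\to(X,n+1)$ for the displayed sequence in $\mathcal{S}$ is obtained from a factorization of the embedding $(X,n)\hookrightarrow(\Lambda\otimes_E X,n+1)$ through $[f;n]$. I would check that these two recipes can be carried out with compatible choices, so that the Happel connecting morphism coincides with $-[\underline{\omega};n+1]$ up to sign and the above isomorphisms; then $\pi$ is exact, hence a triangle equivalence. I expect this reconciliation — the sign bookkeeping together with the fact that the required factorization may exist only after passing to a later stage in the defining colimit of $\mathcal{S}$ — to be the main technical point. A possibly cleaner alternative is to observe that the composite $\Lambda\mbox{-\underline{\rm mod}}\to\underline{\mathcal{S}}$ induced by $\mathbf{S}\colon\Lambda\mbox{-mod}\to\mathcal{S}$ followed by the stable quotient $\mathcal{S}\to\underline{\mathcal{S}}$ (which vanishes on projectives) is a left triangle functor, and then to invoke the universal property of the stabilization of a left triangulated category as developed in \cite{Bel}.
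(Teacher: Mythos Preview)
Your argument is essentially correct and uses the same core ingredient (Lemma~\ref{lem:fac}) as the paper, but the order and the technique for the triangulated part differ. The paper does not verify suspension and connecting morphisms by hand. Instead it observes that the canonical projection $\pi\colon \mathcal{S}\to \mathcal{S}(\Lambda\mbox{-\underline{\rm mod}},\Omega)$ sends each short exact sequence in $\mathcal{S}$ (which, by Lemma~\ref{lem:abel} and Remark~\ref{rem:mono}, comes from one in $\Lambda\mbox{-mod}$) to a triangle of the form~(\ref{equ:tri}); in other words, $\pi$ is a $\partial$-functor in Keller's sense \cite[Section~1]{Kel}. It then cites \cite[Lemma~2.5]{Chen11}, which says that any $\partial$-functor out of a Frobenius abelian category induces a triangle functor on the stable category. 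Only after this does Lemma~\ref{lem:fac} enter, to show that the resulting triangle functor is an equivalence. This packaging absorbs exactly the ``sign bookkeeping'' and the ``pass to a later colimit stage'' issues you flag: the compatibility of the Happel connecting morphism with $-[\underline{\omega};n+1]$ is precisely the content of $\pi$ being a $\partial$-functor, and the cited lemma handles it uniformly. Your closing alternative (invoking the universal property from \cite{Bel}) would also work and is closer in spirit to the paper's use of abstract machinery. One small slip: in your description of Happel's connecting morphism you speak of a ``factorization of the embedding $(X,n)\hookrightarrow(\Lambda\otimes_E X,n+1)$ through $[f;n]$''; what is needed is an \emph{extension} of that embedding along $[f;n]$, i.e.\ a map $(Y,n)\to(\Lambda\otimes_E X,n+1)$ restricting to it.
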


\begin{proof}
    Consider the canonical projection $\pi\colon \mathcal{S} \rightarrow \mathcal{S}(\Lambda\mbox{-\underline{\rm mod}}, \Omega_{\rm nc}\otimes_\Lambda-)$. By Lemma~\ref{lem:abel} and Remark~\ref{rem:mono}, each short exact sequence in $\mathcal{S}$ is isomorphic to the following one
    $$0\longrightarrow (X, n) \stackrel{[f;n]} \longrightarrow (Y, n) \stackrel{[g; n]} \longrightarrow (Z, n) \longrightarrow 0,$$
    where $0\rightarrow X \stackrel{f}\rightarrow Y\stackrel{g} \rightarrow Z \rightarrow 0$ is an exact sequence of $\Lambda$-modules. In view of the triangle (\ref{equ:tri}), we conclude that $\pi$ sends short exact sequences to triangles. More precisely, the functor $\pi$ is a $\partial$-functor in the sense of \cite[Section~1]{Kel}. By \cite[Lemma~2.5]{Chen11} the induced functor $\underline{\mathcal{S}}\rightarrow \mathcal{S}(\Lambda\mbox{-\underline{\rm mod}}, \Omega_{\rm nc}\otimes_\Lambda-)$ is a triangle functor. It is an equivalence by Lemma~\ref{lem:fac}.
\end{proof}

\begin{rem}
    Combining Theorem~\ref{thm:KV} with Proposition~\ref{prop:tri-equ}, we infer a triangle equivalence $\underline{\mathcal{S}}\simeq \mathbf{D}_{\rm sg}(\Lambda)$. In other words, we obtain an explicit \emph{Frobenius enhancement} $\mathcal{S}$ for $\mathbf{D}_{\rm sg}(\Lambda)$. By the general theory on dg quotients  \cite{Kel99, Dri}, it is well known that $\mathbf{D}_{\rm sg}(\Lambda)$ is algebraic  and thus admits Frobenius enhancements; see also \cite{CW}. However, it is highly nontrivial whether  such Frobenius enhancements are unique in some reasonable sense; compare \cite{CS}.
\end{rem}

The following main result  justifies the title.

\begin{thm}\label{thm:main}
    Keep the assumptions above. Recall $\Omega_{\rm nc}=\Omega_{{\rm nc},\Lambda/E}$. Then the Leavitt ring $L_\Lambda(\Omega_{\rm nc})$ is strongly graded, whose zeroth component $L_\Lambda(\Omega_{\rm nc})_0$ is an FC ring. Moreover, we have a triangle equivalence
    $$\mathbf{D}_{\rm sg}(\Lambda)\simeq L_\Lambda(\Omega_{\rm nc})_0\mbox{-\underline{\rm mod}}.$$
\end{thm}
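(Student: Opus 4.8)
The strategy is to assemble Theorem~\ref{thm:main} from the pieces already in place, with the only genuinely new input being the verification that the hypotheses of the relevant general results are met in the present setting. First I would observe that $\Omega_{\rm nc}$ is a $\Lambda$-$\Lambda$-bimodule whose underlying left module $_\Lambda\Omega_{\rm nc}$ is finitely generated projective (cited from \cite[Proposition~2.5]{CQ}), so Theorem~\ref{thm:s-gr} applies with $R=\Lambda$ and $M=\Omega_{\rm nc}$. The crux is to show that $_\Lambda\Omega_{\rm nc}$ is in fact a \emph{progenerator}: this is where the short exact sequence \eqref{equ:nondif} does the work, since for $X=\Lambda$ it exhibits $\Lambda$ as a quotient of $\Lambda\otimes_E\Lambda$ with kernel $\Omega_{\rm nc}$, and $\Lambda\otimes_E\Lambda$ lies in ${\rm add}(_\Lambda\Omega_{\rm nc})$ together with $\Lambda$ once one unwinds the splitting (alternatively, one checks directly that the regular module $_\Lambda\Lambda$ is a summand of $\Omega_{\rm nc}^{\oplus k}$ for suitable $k$, using that $_E\Lambda$ is finitely generated semisimple, hence free after multiplying by a suitable integer, so that $\Lambda\otimes_E\Lambda$ is a finite sum of copies of $_\Lambda\Lambda$). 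Granting that $_\Lambda\Omega_{\rm nc}$ is a progenerator, the descending-chain hypothesis of Lemma~\ref{lem:ss} holds trivially (the chain ${\rm add}(\Omega_{\rm nc})\supseteq{\rm add}(\Omega_{\rm nc}^{\otimes 2})\supseteq\cdots$ is constant equal to ${\rm add}(_\Lambda\Lambda)$), so by Lemma~\ref{lem:ss} combined with Proposition~\ref{prop:iso-graded} the Leavitt ring $L_\Lambda(\Omega_{\rm nc})$ is strongly graded.

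\textbf{Assembling the equivalences.} Next, applying Theorem~\ref{thm:s-gr} together with the equivalence \eqref{equ:Dade} of Dade, we get
$$\mathcal{S}=\mathcal{S}(\Lambda\mbox{-mod}, \Omega_{\rm nc}\otimes_\Lambda-)\simeq L_\Lambda(\Omega_{\rm nc})\mbox{-grmod}\simeq L_\Lambda(\Omega_{\rm nc})_0\mbox{-mod},$$
and likewise the stable versions give $\underline{\mathcal{S}}\simeq L_\Lambda(\Omega_{\rm nc})_0\mbox{-\underline{mod}}$ (using that the first equivalence carries projective-injective objects to projective-injectives, which I would note follows from Proposition~\ref{prop:Frob}). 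Now the FC claim: by Proposition~\ref{prop:Frob} the category $\mathcal{S}$ is Frobenius abelian, and via the equivalence $\mathcal{S}\simeq L_\Lambda(\Omega_{\rm nc})_0\mbox{-mod}$ this transports to the statement that $L_\Lambda(\Omega_{\rm nc})_0\mbox{-mod}$ is Frobenius abelian; by Lemma~\ref{lem:FC} (the equivalence (1)$\Leftrightarrow$(2)) this is precisely the assertion that $L_\Lambda(\Omega_{\rm nc})_0$ is an FC ring. Finally, chaining the triangle equivalences $L_\Lambda(\Omega_{\rm nc})_0\mbox{-\underline{mod}}\simeq\underline{\mathcal{S}}\simeq\mathcal{S}(\Lambda\mbox{-\underline{mod}},\Omega_{\rm nc}\otimes_\Lambda-)\simeq\mathbf{D}_{\rm sg}(\Lambda)$ — the middle equivalence is Proposition~\ref{prop:tri-equ} and the last is Theorem~\ref{thm:KV} — yields the desired $\mathbf{D}_{\rm sg}(\Lambda)\simeq L_\Lambda(\Omega_{\rm nc})_0\mbox{-\underline{mod}}$. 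One should check that the triangulated structure on $L_\Lambda(\Omega_{\rm nc})_0\mbox{-\underline{mod}}$ referred to in the statement is the canonical one coming from $L_\Lambda(\Omega_{\rm nc})_0$ being FC (via Lemma~\ref{lem:FC} and \cite{Dam}), and that the equivalence $\mathcal{S}\simeq L_\Lambda(\Omega_{\rm nc})_0\mbox{-mod}$ matches the two Frobenius structures so that the induced triangle structures on the stable categories agree; this is a formal consequence of the fact that an equivalence of Frobenius abelian categories induces a triangle equivalence of stable categories.

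\textbf{Main obstacle.} The one step that genuinely requires argument rather than bookkeeping is establishing that $_\Lambda\Omega_{\rm nc}$ is a progenerator (equivalently that ${\rm add}(_\Lambda\Omega_{\rm nc})={\rm add}(_\Lambda\Lambda)$), since everything downstream — strong gradedness via Lemma~\ref{lem:ss}, the passage to $L_0$ via Dade's theorem, and the FC conclusion — hinges on it. The key point is that $_E\Lambda$ is a finitely generated module over the semisimple ring $E$; one may choose $n$ so that $_E\Lambda^{\oplus n}$ contains the regular module $_EE$ as a summand (or, more cheaply, that some power $_EE^{\oplus m}$ is a summand of $_E\Lambda^{\oplus n}$ for suitable $m,n$), whence $\Lambda\otimes_E\Lambda$ and $\Lambda=\Lambda\otimes_E E$ generate the same additive subcategory, and then \eqref{equ:nondif} with $X$ ranging over $\Lambda$-proj shows $\Omega_{\rm nc}\otimes_\Lambda-$ preserves ${\rm add}(_\Lambda\Lambda)$ in both directions. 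A secondary, purely formal, point to be careful about is the compatibility of Frobenius structures under the equivalence $\mathcal{S}\simeq L_\Lambda(\Omega_{\rm nc})_0\mbox{-mod}$, so that the canonical triangulated structures on the two stable categories are identified — but this follows because the equivalence of Proposition~\ref{prop:equiv-graded}/Lemma~\ref{lem:equiv} is exact and preserves the classes of projective and injective objects.
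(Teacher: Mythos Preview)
Your assembly of the equivalences (Theorem~\ref{thm:s-gr}, Dade's equivalence \eqref{equ:Dade}, Proposition~\ref{prop:Frob}, Lemma~\ref{lem:FC}, Proposition~\ref{prop:tri-equ}, Theorem~\ref{thm:KV}) is exactly what the paper does and is fine. The gap is in your ``main obstacle'': the claim that $_\Lambda\Omega_{\rm nc}$ is a progenerator is \emph{false} in general, so the descending chain of Lemma~\ref{lem:ss} is not constant.

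Take $\Lambda=\mathbb{K}Q$ for the quiver $Q\colon 1\xrightarrow{\alpha}2$, with $E=\mathbb{K}Q_0$. Then $\overline{\Lambda}=\mathbb{K}\alpha$ and a direct computation (or the split sequence \eqref{equ:nondif} with $X=\Lambda$, noting $\Lambda\otimes_E\Lambda\simeq P_1\oplus P_2\oplus P_2$) gives $_\Lambda\Omega_{\rm nc}\simeq P_2$, the simple projective at vertex~$2$. Since $P_1\notin{\rm add}(P_2)$, the module $_\Lambda\Omega_{\rm nc}$ is not a progenerator; indeed here $\Omega_{\rm nc}^{\otimes_\Lambda 2}=0$, so the chain is ${\rm add}(\Lambda)\supsetneq{\rm add}(P_2)\supsetneq 0=0=\cdots$. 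Your argument breaks at the step ``$_\Lambda\Lambda$ is a summand of $\Lambda\otimes_E\Lambda\simeq\Omega_{\rm nc}\oplus\Lambda$, hence of some $\Omega_{\rm nc}^{\oplus k}$'': the first containment is correct, but it only places $\Lambda$ inside $\Omega_{\rm nc}\oplus\Lambda$, and the summand may come entirely from the $\Lambda$ piece, not from $\Omega_{\rm nc}$.

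The paper's route avoids this entirely: rather than forcing the chain to be constant, it observes that because $\Lambda$ is left artinian there are only finitely many indecomposable projective $\Lambda$-modules up to isomorphism, so the descending chain ${\rm add}(\Lambda)\supseteq{\rm add}(\Omega_{\rm nc})\supseteq{\rm add}(\Omega_{\rm nc}^{\otimes_\Lambda 2})\supseteq\cdots$ of additive subcategories of $\Lambda\mbox{-proj}$ must stabilize. That is precisely the hypothesis of Lemma~\ref{lem:ss}, and from there your downstream argument goes through unchanged.
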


\begin{proof}
    Since $\Lambda$ is left artinian, there are only finitely many indecomposable projective $\Lambda$-modules. It follows that the descending chain in $\Lambda\mbox{-mod}$
    $${\rm add}(\Lambda)\supseteq {\rm add}(\Omega_{\rm nc}\otimes_\Lambda \Lambda) \supseteq {\rm add}(\Omega_{\rm nc}^{\otimes_\Lambda 2}\otimes_\Lambda \Lambda)\supseteq \cdots$$
   necessarily stabilizes. It follows from Lemma~\ref{lem:ss} that the orbit ring $\Gamma(\mathbf{S}(\Lambda); \Sigma)$ is strongly graded. By Proposition~\ref{prop:iso-graded}, we infer that $L=L_\Lambda(\Omega_{\rm nc})$ is also strongly graded. By combining the equivalences in Theorem~\ref{thm:s-gr} and (\ref{equ:Dade}), we deduce an equivalence
    \begin{align}\label{equ:S-L}
    \mathcal{S}\simeq L_0\mbox{-mod},
    \end{align}
    where $L_0=L_\Lambda(\Omega_{\rm nc})_0$.
    By Proposition~\ref{prop:Frob}, we infer that $L_0\mbox{-mod}$ is a Frobenius abelian category. It follows from Lemma~\ref{lem:FC} that $L_0$ is an FC ring.

    The equivalence (\ref{equ:S-L}) induces a triangle equivalence $\underline{\mathcal{S}}\simeq L_0\mbox{-\underline{mod}}$. By combining Theorem~\ref{thm:KV} and Proposition~\ref{prop:tri-equ}, we deduce the required triangle equivalence.  \end{proof}

\begin{rem}\label{rem:final}
 By the equivalence (\ref{equ:S-L}) and Example~\ref{exm:noe}, the ring $L_0$ is not noetherian in general. It follows that the FC ring $L_0$ is not quasi-Frobenius in general.
\end{rem}

Recall that a ring $R$ is called \emph{von Neumann regular} if any finitely presented $R$-module is projective. Any von Neumann regular ring is FC.  We have the following immediate consequence of Theorem~\ref{thm:main}.

\begin{cor}\label{cor:regular}
Keep the assumptions above. Then the left artinian ring $\Lambda$ has finite global dimension if and only if the  ring $L_\Lambda(\Omega_{\rm nc})_0$ is von Neumann regular.
\end{cor}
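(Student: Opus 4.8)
The plan is to deduce Corollary~\ref{cor:regular} from Theorem~\ref{thm:main} together with the fundamental property of the singularity category, namely that $\mathbf{D}_{\rm sg}(\Lambda)$ vanishes precisely when $\Lambda$ has finite global dimension. First I would recall, as stated in the introduction, that for a left artinian ring $\Lambda$, one has $\mathbf{D}_{\rm sg}(\Lambda)=0$ if and only if $\operatorname{gldim}\Lambda<\infty$. By the triangle equivalence $\mathbf{D}_{\rm sg}(\Lambda)\simeq L_0\mbox{-\underline{\rm mod}}$ from Theorem~\ref{thm:main}, where $L_0=L_\Lambda(\Omega_{\rm nc})_0$, this is equivalent to the stable category $L_0\mbox{-\underline{\rm mod}}$ being zero.

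The second step is to translate ``$L_0\mbox{-\underline{\rm mod}}=0$'' into a statement purely about the FC ring $L_0$. Since $L_0\mbox{-\underline{\rm mod}}$ is the factor category of $L_0\mbox{-mod}$ modulo projectives, it vanishes exactly when every finitely presented $L_0$-module is projective, i.e.\ when $L_0$ is von Neumann regular in the sense recalled just above the corollary. Here I would use that $L_0$ is coherent (being FC), so that $L_0\mbox{-mod}$ is abelian and the notion of ``every finitely presented module is projective'' is the right finitistic reformulation; one direction is immediate from the definition of the stable category, and the converse is equally direct since a projective object that is zero in the stable category is already captured by the factor construction. Combining the two steps gives: $\operatorname{gldim}\Lambda<\infty$ $\iff$ $\mathbf{D}_{\rm sg}(\Lambda)=0$ $\iff$ $L_0\mbox{-\underline{\rm mod}}=0$ $\iff$ $L_0$ is von Neumann regular.

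I do not anticipate a genuine obstacle here; the corollary is essentially a formal consequence of Theorem~\ref{thm:main} and the defining feature of singularity categories. The only point requiring a word of care is the equivalence ``$L_0\mbox{-\underline{\rm mod}}=0\iff L_0$ von Neumann regular'', which should be spelled out rather than asserted: the stable category of a ring $R$ vanishes if and only if every finitely presented left $R$-module is projective, and for this the coherence of $L_0$ guarantees that $L_0\mbox{-mod}$ behaves well (has kernels, etc.), though in fact the equivalence holds without any coherence hypothesis. One might also remark that, by Lemma~\ref{lem:FC} and the left/right symmetry of the FC condition, regularity of $L_0$ could equivalently be tested on the opposite side, but this is not needed for the proof as stated.
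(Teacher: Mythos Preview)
Your proposal is correct and follows essentially the same argument as the paper: both use the triangle equivalence from Theorem~\ref{thm:main} together with the standard facts that $\mathbf{D}_{\rm sg}(\Lambda)=0$ iff $\Lambda$ has finite global dimension, and that $L_0\mbox{-\underline{\rm mod}}=0$ iff every finitely presented $L_0$-module is projective (i.e.\ $L_0$ is von Neumann regular). The paper's proof is simply a terser version of what you wrote.
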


\begin{proof}
    Recall that $\Lambda$ has finite global dimension if and only if the singularity category $\mathbf{D}_{\rm sg}(\Lambda)$ vanishes. The stable module category $L_\Lambda(\Omega_{\rm nc})_0\mbox{-\underline{\rm mod}}$ vanishes if and only if $L_\Lambda(\Omega_{\rm nc})_0$ is von Neumann regular. Then we are done by the triangle equivalence in Theorem~\ref{thm:main}.
\end{proof}

 The corollary above indicates that the non-regularity of $L_\Lambda(\Omega_{\rm nc})_0$ detects the homological singularity of $\Lambda$.

 \section{An example}
 \label{section:anexample}

In this section, we describe the Leavitt ring $L_\Lambda(\Omega_{\rm nc})$ of an algebra $\Lambda$ with radical square zero. It turns out that its zeroth component $L_0$ is a trivial extension of a von Neumann regular ring.

 Recall that a quiver $Q=(Q_0, Q_1; s,t)$ consists of the following data: a set $Q_0$ of vertices, a set $Q_1$ of arrows, and the maps $s, t\colon Q_1\rightarrow Q_0$ assign to each arrow $\alpha$ its starting vertex $s(\alpha)$ and terminating vertex $t(\alpha)$. Let $\mathbb{K}$ be a field. Denote by $\mathbb{K}Q$ its path algebra, which has a $\mathbb{K}$-basis given by paths in $Q$. Each vertex $i$ corresponds to a trivial path $e_i$, which becomes an idempotent in $\mathbb{K}Q$. We will assume that $Q$ is finite, that is, both sets $Q_0$ and $Q_1$ are finite. In this situation, the path algebra $\mathbb{K}Q$ is unital with $1=\sum_{i\in Q_0} e_i$.

For a finite quiver $Q$, denote by $\Lambda=\mathbb{K}Q/{\mathbb{K}Q_{\geq 2}}$ the corresponding algebra with radical square zero. Here, $\mathbb{K}Q_{\geq 2}$ denotes the two-sided ideal generated by paths of length two. The following relations
\begin{align}\label{rel:0}
\beta\alpha=0 \mbox{ for any } \alpha, \beta\in Q_1 \mbox{ satisfying } t(\alpha)=s(\beta)
\end{align}
hold in $\Lambda$. Here, we write the concatenation of paths from righ to left.

Set $E=\mathbb{K}Q_0$, which is viewed as a semisimple subalgebra of $\Lambda$. As an $E$-$E$-bimodule, we have $\Lambda=E\oplus \mathbb{K}Q_1$. We identify $\overline{\Lambda}=\Lambda/E$ with $\mathbb{K}Q_1$. Therefore, the bimodule $\Omega_{\rm nc}=\overline{\Lambda}\otimes_E \Lambda$ of  $E$-relative noncommutative differential $1$-forms is identified with $\mathbb{K}Q_1\otimes_E \Lambda$. It has a $\mathbb{K}$-basis given by
$$\{ \alpha \otimes_E e_{s(\alpha)}, \; \beta \otimes_E \alpha\; |\; \alpha, \beta \in Q_1,\  t(\alpha)=s(\beta) \}.$$
Set $\bar{\alpha}=\alpha \otimes_E e_{s(\alpha)}$. The set $\{\bar{\alpha}\; |\; \alpha\in Q_1\}$ generates the bimodule $\Omega_{\rm nc}$, which satisfies the following relations:
\begin{align}\label{rel:1}
    \beta \bar{\alpha}+\bar{\beta}\alpha=0 \mbox{ for any } \alpha, \beta\in Q_1 \mbox{ satisfying } t(\alpha)=s(\beta).
\end{align}

Consider the left-dual bimodule $\Omega_{\rm nc}^*={\rm Hom}_\Lambda (\Omega_{\rm nc}, \Lambda)$. Each $\alpha\in Q_1$ determines a unique element $\bar{\alpha}^*\in \Omega_{\rm nc}^*$ satisfying
$$\bar{\alpha}^*(\bar{\alpha'})=\delta_{\alpha, \alpha'} \; e_{t(\alpha)}.$$
We observe that $\bar{\alpha}^*(\beta\otimes_E \alpha)=-\beta$. In the bimodule $\Omega_{\rm nc}^*$, we have $\bar{\alpha}^*= e_{s(\alpha)} \bar{\alpha}^* e_{t(\alpha)}$. The bimodule $\Omega_{\rm nc}^*$ has a $\mathbb{K}$-basis given by
$$\{\bar{\alpha}^*, \bar{\alpha}^*\alpha'\; |\; \alpha, \alpha'\in Q_1 \mbox{ satisfying } t(\alpha)=t(\alpha')\}.$$
In particular,  the set $\{\bar{\alpha}^*\; |\; \alpha\in Q_1\}$ generates $\Omega_{\rm nc}^*$ as a right $\Lambda$-module. These elements satisfy the following relations.
\begin{align}\label{rel:2}
  \alpha' \bar{\alpha}^* =\delta_{\alpha, \alpha'} \sum_{\{\beta\in Q_1\; |\; s(\beta)=t(\alpha)\}} \bar{\beta}^*\beta.
\end{align}
Here, if $t(\alpha)$ is a sink, that is, the set $\{\beta\in Q_1\; |\; s(\beta)=t(\alpha)\}$ is empty, the relation above is understood as $\alpha \bar{\alpha}^*=0$.

The set $\{\bar{\alpha}, \bar{\alpha}^*\; |\; \alpha\in Q_1\}$ forms a dual basis for $\Omega_{\rm nc}$. Therefore, we have the Casimir element
$$c=\sum_{\alpha \in Q_1} \bar{\alpha}^*\otimes_E \bar{\alpha}. $$

Denote by $\widetilde{Q}$ the quiver obtained from $Q$ by adding for each $\alpha\in Q_1$, a new parallel arrow $\bar{\alpha}$ and the reverse arrow $\bar{\alpha}^*$. We will consider the \emph{Cuntz-Krieger relations} in $\mathbb{K}\widetilde{Q}$.
\begin{align}\label{rel:CK}
    \bar{\alpha'}\bar{\alpha}^*=\delta_{\alpha, \alpha'}\; e_{t(\alpha)}, \mbox{ and }  \sum_{\alpha\in Q_1} \bar{\alpha}^*\bar{\alpha}=1.
\end{align}

Denote by $I$ the two-sided ideal of $\mathbb{K}\widetilde{Q}$ generated by the relations (\ref{rel:0}), (\ref{rel:1}), (\ref{rel:2}) and the Cuntz-Krieger relations (\ref{rel:CK}). The path algebra $\mathbb{K}\widetilde{Q}$ is naturally $\mathbb{Z}$-graded by means of ${\rm deg}(\alpha)=0$, ${\rm deg}(\bar{\alpha})=-1$ and ${\rm deg}(\bar{\alpha}^*)=1$. The ideal $I$ is homogeneous,  and thus the quotient algebra $\mathbb{K}\widetilde{Q}/I$ is $\mathbb{Z}$-graded.

 \begin{prop}\label{prop:radzero}
     Recall that $\Lambda=\mathbb{K}Q/{\mathbb{K}Q_{\geq 2}}$ and $E=\mathbb{K}Q_0$. Then as a graded algebra,  the Leavitt ring $L_\Lambda(\Omega_{\rm nc})$ is isomorphic to the quotient algebra $\mathbb{K}\widetilde{Q}/I$.
 \end{prop}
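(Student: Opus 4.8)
The plan is to prove the isomorphism by comparing presentations, unwinding the explicit descriptions of $\Lambda$, $\Omega_{\rm nc}$ and $\Omega_{\rm nc}^*$ assembled just above. Recall that $L_\Lambda(\Omega_{\rm nc})$ is, by definition, the tensor ring $T_\Lambda(\Omega_{\rm nc}^*\oplus\Omega_{\rm nc})$ modulo the two-sided ideal generated by $\{x\otimes_\Lambda f-f(x)\; |\; x\in\Omega_{\rm nc},\ f\in\Omega_{\rm nc}^*\}$ together with $c-1_\Lambda$. So it suffices to present $T_\Lambda(\Omega_{\rm nc}^*\oplus\Omega_{\rm nc})$ by generators and relations, and then to check that, under that presentation, the Leavitt ideal becomes the ideal generated by the Cuntz-Krieger relations (\ref{rel:CK}).

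First I would establish that, as a $\Lambda$-$\Lambda$-bimodule, $\Omega_{\rm nc}$ is generated by $\{\bar\alpha\; |\; \alpha\in Q_1\}$ with \emph{defining} relations (\ref{rel:1}), and likewise $\Omega_{\rm nc}^*$ is generated by $\{\bar\alpha^*\; |\; \alpha\in Q_1\}$ with \emph{defining} relations (\ref{rel:2}). This amounts to comparing the explicit $\mathbb{K}$-bases displayed above with the $\mathbb{K}$-basis of the free $\Lambda$-bimodule on symbols $\bar\alpha$ (resp.\ $\bar\alpha^*$) of the prescribed $E$-support, and checking that the resulting surjection onto $\Omega_{\rm nc}$ (resp.\ $\Omega_{\rm nc}^*$) has kernel generated by (\ref{rel:1}) (resp.\ (\ref{rel:2})). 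A byproduct, used below, is that $\{\bar\alpha\}$ generates $\Omega_{\rm nc}$ already as a \emph{left} $\Lambda$-module (relation (\ref{rel:1}) expresses the remaining basis vectors $\beta\otimes_E\alpha$ through the left $\Lambda$-action on the $\bar\alpha$), while $\{\bar\alpha^*\}$ generates $\Omega_{\rm nc}^*$ already as a \emph{right} $\Lambda$-module.

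Next, since $\Lambda=\mathbb{K}Q/\mathbb{K}Q_{\geq 2}$ is the tensor $E$-algebra on the $E$-$E$-bimodule $\mathbb{K}Q_1$ modulo the relations (\ref{rel:0}), I would invoke the routine fact that the tensor ring, over a presented algebra, of a presented bimodule is presented by the union of the two sets of generators and relations. Together with the previous step this gives a $\mathbb{Z}$-graded isomorphism
$$T_\Lambda(\Omega_{\rm nc}^*\oplus\Omega_{\rm nc})\ \cong\ \mathbb{K}\widetilde{Q}\big/\big((\ref{rel:0}),(\ref{rel:1}),(\ref{rel:2})\big),$$
where the grading on the right-hand side assigns degrees $0$, $-1$, $1$ to the arrows $\alpha$, $\bar\alpha$, $\bar\alpha^*$, matching the Leavitt grading. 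It then remains to identify the Leavitt relations inside this quotient. On generators this is immediate: for $x=\bar{\alpha'}$ and $f=\bar\alpha^*$ one has $x\otimes_\Lambda f-f(x)=\bar{\alpha'}\bar\alpha^*-\bar\alpha^*(\bar{\alpha'})=\bar{\alpha'}\bar\alpha^*-\delta_{\alpha,\alpha'}e_{t(\alpha)}$, which is the first relation of (\ref{rel:CK}); and since $\{\bar\alpha,\bar\alpha^*\}$ is the dual basis, the Casimir element (\ref{align:Casimir}) maps to $\sum_{\alpha}\bar\alpha^*\bar\alpha$, so $c-1_\Lambda$ is the second relation of (\ref{rel:CK}). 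To pass from generators to arbitrary $x,f$, I would use the one-sided generations above: writing $x=\sum_\alpha r_\alpha\bar\alpha$ and $f=\sum_\beta\bar\beta^*s_\beta$ with $r_\alpha, s_\beta\in\Lambda$, the left $\Lambda$-linearity of $f$ and the identity $(\bar\beta^*s_\beta)(x)=\bar\beta^*(x)s_\beta$ yield
$$x\otimes_\Lambda f-f(x)=\sum_{\alpha,\beta}r_\alpha\,\big(\bar\alpha\otimes_\Lambda\bar\beta^*-\bar\beta^*(\bar\alpha)\big)\,s_\beta,$$
which already lies in the two-sided ideal generated by the generator relations. Combining these observations yields the graded isomorphism $L_\Lambda(\Omega_{\rm nc})\cong\mathbb{K}\widetilde{Q}/I$.

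The step carrying real content is the completeness of the bimodule presentations in the second paragraph: showing that no relations beyond (\ref{rel:1}) and (\ref{rel:2}) are needed is essentially the $\mathbb{K}$-dimension bookkeeping already implicit in the basis computations preceding the statement, and I expect this to be the main obstacle. Everything afterwards --- presenting the tensor ring, and rewriting the two families of Leavitt relations as Cuntz-Krieger relations --- is a routine unwinding, provided one keeps track of left/right conventions and of the orientations of the new arrows $\bar\alpha$, $\bar\alpha^*$ in $\widetilde{Q}$ (so that $\bar\alpha=e_{t(\alpha)}\bar\alpha e_{s(\alpha)}$ and $\bar\alpha^*=e_{s(\alpha)}\bar\alpha^*e_{t(\alpha)}$, i.e.\ $\bar\alpha$ is parallel to $\alpha$ and $\bar\alpha^*$ is its reverse).
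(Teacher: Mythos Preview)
Your proposal is correct and follows essentially the same approach as the paper: present $T_\Lambda(\Omega_{\rm nc}^*\oplus\Omega_{\rm nc})$ as $\mathbb{K}\widetilde{Q}$ modulo the relations (\ref{rel:0}), (\ref{rel:1}), (\ref{rel:2}), and then identify the Leavitt defining ideal with the Cuntz--Krieger relations (\ref{rel:CK}). The paper's own proof merely asserts these two steps and ``omit[s] the details''; your write-up supplies exactly the missing details---the completeness of the bimodule presentations, the one-sided generation needed to reduce the relations $x\otimes_\Lambda f-f(x)$ to generators, and the identification of the Casimir element---so there is nothing methodologically new, only a careful expansion of the argument the paper sketches.
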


 \begin{proof}
     The tensor ring $T_\Lambda(\Omega_{\rm nc}^*\oplus \Omega_{\rm nc})$ is isomorphic to the quotient algebra of $\mathbb{K}\widetilde{Q}$ modulo the relations (\ref{rel:0}), (\ref{rel:1}) and  (\ref{rel:2}). The two generators in the defining  ideal of the Leavitt ring correspond to the two  Cuntz-Krieger relations above. We omit the details. 
 \end{proof}

\begin{rem}
    Set $\bar{Q}_1=\{\bar{\alpha} \; |\; \alpha\in Q_1\}$. Then $\mathbb{K}\bar{Q}_1$ is naturally an $E$-$E$-bimodule. Consider the graded subalgebra  $U$ of $\mathbb{K}\widetilde{Q}/I$ generated by $\{e_i, \bar{\alpha}, \bar{\alpha}^*\; |\; i\in Q_0, \alpha\in Q_1\}$. We have a decomposition
    $$ \mathbb{K}\widetilde{Q}/I=U\oplus (\Sigma_{\alpha\in Q_1}U\alpha)$$
    of $U$-$U$-bimodules. The complement $V=\Sigma_{\alpha\in Q_1}U\alpha$ of $U$ in  $\mathbb{K}\widetilde{Q}/I$ is square zero. In other words, $\mathbb{K}\widetilde{Q}/I$ is a trivial extension $U\ltimes V$ of $U$ by the $U$-$U$-bimodule $V$.

    We observe that $U$ is isomorphic to the Leavitt ring $L_E(\mathbb{K}\bar{Q}_1)$ associated to the $E$-$E$-bimodule $\mathbb{K}\bar{Q}_1$, and thus  isomorphic to a certain Leavitt path algebra; see \cite[Proposition~4.1(2)]{CWKW}. Consequently, its zeroth component $U_0$ is von Neumann regular; see \cite[Lemma~4.1(2)]{CY}.

    By the isomorphism in Proposition~\ref{prop:radzero}, we infer that the zeroth component $L_\Lambda(\Omega_{\rm nc})_0$ is isomorphic to a trivial extension $U_0\ltimes  V_0$ of a von Neumann regular ring $U_0$ by a certain $U_0$-$U_0$-bimodule $V_0$. However, the bimodule structure of $V_0$ is not well understood.

    In view of Theorem~\ref{thm:main}, we actually obtain an example of a certain trivial extension of a von Neumann regular ring being an FC ring. This might be analogous to the well-known fact that the trivial extension of any artinian  algebra by the dual of the regular bimodule is always symmetric; see \cite[Chapter~II, Proposition~3.9]{ARS}.
\end{rem}

 \vskip 15pt
\noindent{\bf Acknowledgements}. \quad  We thank Pere Ara, Roozbeh Hazrat, Martin Kalck and Huanhuan Li for helpful comments. We also thank Martin Kalck for suggesting the example in Section~\ref{section:anexample}. This project is supported by National Key R$\&$D Program of China (No.s 2024YFA1013801 and  2024YFA1013803), the Fundamental Research Funds for the Central Universities (No. 020314380037) and the National Natural Science Foundation of China (No.s 12325101, 12131015, 13004005 and 12371043).

\bibliography{}

\begin{thebibliography}{9999}

\bibitem{AAP} {\sc  G. Abrams, and G. Aranda Pino}, {\em The Leavitt path algebra of a graph}, J. Algebra {\bf 293} (2) (2005), 319--334.

\bibitem{AMP} {\sc  P. Ara, M.A. Moreno, and E. Pardo}, {\em Nonstable K-theory for graph algebras}, Algebr. Represent. Theory {\bf 10} (2) (2007),  157--178.

\bibitem{AP} {\sc P. Ara,  and E. Pardo}, {\em K-Theoretic characterization of graded isomorphisms between Leavitt path algebras},  J. K-Theory 14 (2014), no. 2, 203–245.]

\bibitem{Aus} {\sc M. Auslander},  Representation Dimension of Artin Algebras,  Math. Notes Queen Mary College, London, 1971.

\bibitem{ARS} {\sc M. Auslander, I. Reiten, and S.O. Smalo},  Representation Theory of Artin Algebras, Cambridge Stud. Adv. Math. {\bf 36}, Cambridge Univ. Press, Cambridge, 1995.

\bibitem{Bel} {\sc A. Beligiannis,} {\em The homological theory of contravariantly finite
subcategories: Auslander-Buchweitz contexts, Gorenstein categories and (co-)stabilization,}
 Comm. Algebra {\bf 28} (10) (2000), 4547--4596.



\bibitem{BM} {\sc A. Beligiannis, and M. Marmaridis}, {\em Left triangulated categories arising from contravariantly finite subcategories}, Comm. Algebra {\bf 22} (12) (1994), 5021--5036.


\bibitem{Berg} {\sc P.A. Bergh}, {\em Orbit algebras and periodicity}, Colloq. Math. {\bf 114} (2009), 245--252.

\bibitem{Buc}{\sc R.O. Buchweitz},  Maximal Cohen-Macaulay Modules and Tate-cohomology over Gorenstein Rings, with appendices by L.L. Avramov, B. Briggs, S.B. Iyengar, and J.C. Letz,  Math. Surveys and Monographs {\bf 262}, Amer. Math. Soc., 2021.

\bibitem{CO} {\sc T.M. Carlsen, and E. Ortega}, {\em Algebraic Cuntz-Pimsner rings}, Proc. Lond. Math. Soc. {\bf 103} (3)  (2011), 601--653.

\bibitem{CS} {\sc A. Canonaco, and P. Stellari}, {\em A tour about existence and uniqueness of dg enhancements
and lifts}, J. Geom. Phys. {\bf 122} (2017), 28--52.


\bibitem{Chen11} {\sc X.W. Chen}, {\em Relative singularity categories and Gorenstein-projective modules},  Math. Nachr. {\bf 284} (2-3) (2011), 199--212.

\bibitem{CW}  {\sc X.W. Chen, and Z. Wang}, {\em Differential graded enhancements of singularity categories}, Proc. IRCA 2022, to appear,  arXiv:2312.12138v1, 2023.

\bibitem{CWKW} {\sc X.W. Chen, and Z. Wang}, {\em The dg Leavitt algebras, singular Yoneda category and singularity category}, with an appendix by Bernhard Keller and Yu Wang, Adv. Math. {\bf 440} (2024), 109541.

\bibitem{CY} {\sc  X.W. Chen, and D. Yang}, {\em Homotopy categories, Leavitt path algebras and Gorenstein projective modules}, Int. Math. Res. Not. {\bf 10} (2015),  2597--2633.

\bibitem{CFHL} {\sc L.O. Clark, J. Fletcher, R. Hazrat, and H. Li}, {\em $\mathbb{Z}$-graded rings as Cuntz-Pimsner rings}, J. Algebra  {\bf 536} (2019), 82--101.


\bibitem{CQ} {\sc J. Cuntz, and D. Quillen}, {\em Algebra extensions and nonsingularity}, J. Amer. Math. Soc. {\bf 8} (2) (1995), 251--289.

\bibitem{Dade} {\sc E.C. Dade}, {\em Group-graded rings and modules}, Math. Z. {\bf 174} (3) (1980), 241--262.

\bibitem{Dam}  {\sc R.F. Damiano}, {\em Coflat rings and modules}, Pacific J. Math. {\bf 81} (1979), 349--369.


\bibitem{Dri} {\sc V. Drinfeld}, {\em DG quotients of DG categories}, J. Algebra {\bf 272} (2) (2004), 643--691.


\bibitem{Ebel} {\sc W. Ebeling}, {\em Homological mirror symmetry for singularities}, in: Representation Theory--Current Trends and Perspectives, EMS Series Congress Reports, 75--107, European  Math. Soc.,  Zurich, 2017.


\bibitem{Hap} {\sc D. Happel},  Triangulated Categories in the Representation Theory of Finite Dimensional Algebras, London Math. Soc. Lect. Note Ser. {\bf 119}, Cambridge Univ. Press, Cambridge, 1988.

\bibitem{Hazrat1} {\sc R. Hazrat}, {\em The dynamics of Leavitt path algebras}, J. Algebra {\bf 384} (2013), 242--266.

\bibitem{Hazrat} {\sc R. Hazrat}, Graded Rings and Graded Grothendieck Groups, London Math. Soc. Lect. Note Set. {\bf 435}, Cambridge Univ. Press, Cambridge, 2016.



\bibitem{Hel} {\sc  A. Heller}, {\em The loop-space functor in homological algebra}, Trans. Amer. Math. Soc. {\bf 96} (1960), 382--394.

\bibitem{Hel68} {\sc A. Heller}, {\em Stable homotopy categories}, Bull. Amer. Math. Soc. {\bf 74} (1968), 28--63.

\bibitem{Kalck} {\sc M. Kalck}, {\em A new equivalence between singularity categories of commutative algebras}, Adv. Math. {\bf 390} (2021), 107913.

\bibitem{KT} {\sc M. Kalck, and J. Karmazyn}, {\em Noncommutative Kn\"orrer type equivalences via noncommutative resolutions of singularities}, arXiv:1707.02836v1, 2017.


\bibitem{Kel} {\sc B. Keller}, {\em Derived categories and universal problems}, Comm. Algebra {\bf 19} (1991), 699--747.

\bibitem{Kel99} {\sc B. Keller}, {\em On the cyclic homology of exact categorues}, J. Pure Appl. Algebra {\bf 136} (1) (1999), 1--56.

\bibitem{KV} {\sc B. Keller, and D. Vossieck}, {\em Sous les catšŠgories dšŠrivšŠes}, C. R. Acad. Sci. Paris {\bf 305} (1987), 225--228.


\bibitem{Leav} {\sc W.G. Leavitt}, {\em The module type of a ring}, Trans. Amer. Math. Soc. {\bf 103} (1962), 113--130.


\bibitem{Len}  {\sc H. Lenzing}, {\em Wild canonical algebras and rings of automorphic forms}, in: Finite-Dimensional Algebras and Related Topics, NATO ASI Ser.  {\bf 424}, 191--212, Kluwer, Dordrecht, 1994.

\bibitem{NVO} {\sc C. Nastasescu, and F. Van Oystaeyen}, Methods of Graded Rings, Lecture Notes in Math. {\bf 1836}, Springer-Verlag, Berlin, 2004.

\bibitem{Orl} {\sc D. Orlov},  {\em Triangulated categories of singularities and $D$-branes in Landau-Ginzburg models},  Proc. Steklov Inst. Math. {\bf 246} (3) (2004), 227--248.

\bibitem{Smi} {\sc  S.P. Smith}, {\em Category equivalences involving graded modules over path algebras of quivers}, Adv.
Math. {\bf 230} (2012), 1780--1810.

\bibitem{Tie} {\sc M. Tierney}, Categorical Constructions in Stable Homotopy Theory, Lecture Notes in Math. {\bf 87}, Springer-Verlag, Berlin, Heidelberg, New York, 1969.

\end{thebibliography}

\vskip 10pt

 {\footnotesize \noindent  Xiao-Wu Chen\\
 School of Mathematical Sciences, University of Science and Technology of China, Hefei 230026, Anhui, PR China\\

 \footnotesize \noindent Zhengfang Wang\\
 School of Mathematics, Nanjing University, Nanjing 210093, Jiangsu, PR China}

\end{document}